\documentclass[10pt,leqno,twoside]{amsart}

\usepackage{tikz, subfigure, xcolor} 
\usetikzlibrary{snakes}
\usepackage{amsmath}
\usepackage{amsthm}
\usepackage{amssymb}
\usepackage{amsfonts}
\usepackage{cite}
\usepackage{dsfont}
\usepackage{hyperref}

\setlength{\textwidth}{16cm}
\setlength{\textheight}{21cm}
\hoffset=-55pt

\newtheorem{theorem}{Theorem}[section]
\newtheorem{lemma}[theorem]{Lemma}
\newtheorem{proposition}[theorem]{Proposition}

\theoremstyle{definition}

\newtheorem{remark}[theorem]{Remark}
\newtheorem{remarks}[theorem]{Remarks}

\numberwithin{equation}{section}

\newcommand{\supp}{\mathrm{supp}}      
\renewcommand{\Re}{{\ensuremath{\mathrm{Re\,}}}} 
\renewcommand{\div}{\mathrm{div}\,}    

\providecommand{\norm}[1]{\lVert#1\rVert} 
\providecommand{\abs}[1]{\lvert#1\rvert} 

\DeclareMathOperator{\Ran}{Ran}




\newcommand{\R}{\mathbb{R}}

\newcommand{\N}{\mathbb{N}}

\newcommand{\ft}{\mathfrak{t}}




\usepackage{eucal}

\title[Global Strong $L^p$-Well-Posedness of the Full Primitive Equations]{Global Strong $L^p$ Well-Posedness of the 3D Primitive Equations with Heat and Salinity Diffusion}

\subjclass[2010]{Primary: 35Q35; Secondary: 76D03, 47D06, 86A05.}
\keywords{Global strong well-posedness, Primitive Equations\\
$^*$ This author is supported by the DFG International Research Training Group IRTG 1529 on Mathematical Fluid Dynamics at TU Darmstadt}

\author{
Matthias Hieber} 
\address{Departement of Mathematics,
TU Darmstadt, Schlossgartenstr. 7, 64289 Darmstadt, Germany}
\email{hieber@mathematik.tu-darmstadt.de}

\author{Amru Hussein$^*$} 
\address{Departement of Mathematics,
TU Darmstadt, Schlossgartenstr. 7, 64289 Darmstadt, Germany}
\email{hussein@mathematik.tu-darmstadt.de}

\author{Takahito Kashiwabara}
\address{Graduate School of Mathematical Sciences, The University of Tokyo, 3-8-1 Komaba, Meguro, Tokyo 153-8914, Japan}
\email{tkashiwa@ms.u-tokyo.ac.jp}

\begin{document}

\begin{abstract}
\noindent
Consider the full primitive equations, i.e. the three dimensional primitive equations coupled to the equation for temperature and salinity, and subject to outer forces. It is shown that this set of equations is  globally strongly well-posed for {\it arbitrary} large initial data lying in certain interpolation spaces, which are explicitly characterized as subspaces of $H^{2/p,p}$, $1<p<\infty$, satisfying certain  boundary conditions. In particular, global well-posedeness of the full primitive equations is obtained for initial data having less differentiability properties than $H^1$, hereby generalizing by result by Cao and Titi \cite{CaoTiti2007} to the case of non-smooth data. 
In addition, it is shown that the solutions are exponentially decaying provided the outer forces possess this property. 
\end{abstract}

\maketitle

\section{Introduction}  

The convective flow in ocean dynamics is often described by the Boussinesq equations, which are the Navier-Stokes equations of incompressible flows coupled to the heat and salinity 
diffusion-transport equations. Considering the situation of ocean dynamics, the shallowness of the ocean or atmosphere is taken into account by modeling the vertical 
motion of the fluid with the hydrostatic balance.  This leads to the primitive equations, which are considered nowadays to be a fundamental model in geophysical flows. This set of equations was introduced and analyzed first by Lions, Temam and Wang in a series of articles 
\cite{Lionsetall1992,Lionsetall1992_b, Lionsetall1993}. For more information on these equations we also refer to the work of 
Majda \cite{Majda2003}, Pedlosky \cite{Pedlosky1987}, Vallis \cite{Vallis2006}  and Washington-Parkinson \cite{WashingtonParkinson1986}.  The full primitive equations, i.e. the equations describing the conservation of momentum and 
mass of the fluid coupled with the equations for temperature as well as salinity are given by
\begin{align}\label{eq:prim}
\left\{
\begin{array}{rll}
\partial_t v + u \cdot \nabla v  - \Delta v + \nabla_H \pi  & = f, \quad &\text{ in } \Omega \times (0,T),  \\
\div u & = 0, \quad &\text{ in } \Omega \times (0,T),\\
\partial_t \tau + u \cdot \nabla \tau - \Delta \tau & =  g_{\tau}, \quad &\text{ in } \Omega \times (0,T),\\
\partial_t \sigma + u \cdot \nabla \sigma - \Delta \sigma & = g_{\sigma}, \quad &\text{ in } \Omega \times (0,T),\\
\partial_z \pi + 1- \beta_{\tau} (\tau-1) +  \beta_{\sigma} (\sigma-1) & = 0,\quad &\text{ in } \Omega \times (0,T),
\end{array}\right.
\end{align}
with initial conditions $v(0) = a$, $\tau(0)  = b_{\tau}$, $\sigma(0) = b_{\sigma}$ and forcing terms $f$, $g_{\tau}$ and $g_{\sigma}$.
Here $\Omega = G \times (-h,0)\subset \R^3$, with $G = (0,1) \times (0,1)$. The velocity $u$ of the fluid is described by  $u=(v,w)$, where $v=(v_1,v_2)$ denotes the horizontal 
component and $w$ the vertical one. In addition, the temperature and salinity are denoted by $\tau$ and $\sigma$, respectively, and $\pi$ denotes the pressure of the fluid.  
Moreover, we assume $\beta_{\tau}, \beta_{\sigma}>0$.  Denoting the horizontal coordinates by $x,y\in G$ and the vertical one by $z\in (-h,0)$, we use the notation 
 $\nabla_H = \left(\partial_x, \partial_y \right)^T$, whereas $\Delta$ denotes the three dimensional Laplacian and $\nabla$ and $\div$ the three dimensional gradient and divergence operators.

The system is complemented by the boundary conditions  
\begin{align}\label{eq:bc}
\left\{
\begin{array}{rrrr}
\partial_z v = 0, \quad w= 0, & \partial_z \tau + \alpha \tau = 0,  & \partial_z \sigma = 0  & \hbox{on } \Gamma_u \times (0,\infty), \\
v = 0,  \quad w= 0, & \partial_z \tau = 0,    & \partial_z \sigma = 0  & \hbox{on } \Gamma_b \times (0,\infty), \\ 
v, \pi, \tau, \sigma & \hbox{are periodic } & & \hbox{on } \Gamma_l \times (0,\infty), 
\end{array}\right.
\end{align}    
where 
\begin{align*}
\Gamma_u = G \times \{0\}, \quad \Gamma_b = G \times \{ -h\} \quad \hbox{and} \quad \Gamma_l = \partial G \times (-h,0),
\end{align*}
and $\alpha>0$. 
The rigorous analysis of the primitive equations started with the pioneering work of Lions, Temam and Wang \cite{Lionsetall1992,Lionsetall1992_b, Lionsetall1993}, who proved the existence of a global weak solution for this set of 
equations for initial data $a \in L^2$ and $b_\tau \in L^2$, $b_\sigma \in L^2$. 
For recent results on the uniqueness problem for weak solutions, we refer to the work of Li and Titi \cite{LiTiti2015} and 
Kukavica, Pei, Rusin and Ziane \cite{Kukavicaetall2014}.  

The existence of a local, strong solution for 
the decoupled velocity equation with data $a \in H^1$ was proved by Guill\'en-Gonz\'alez, Masmoudi and Rodiguez-Bellido in \cite{Guillenetall2001}. 

In 2007, Cao and Titi \cite{CaoTiti2007} proved a breakthrough result for this set of equation which says, roughly speaking,  that there exists a unique, global strong solution to 
the primitive equations for {\it arbitrary} initial data $a\in H^1$ and $b_\tau \in H^1$
neglecting salinity. 
Their proof is based on \textit{a priori} $H^1$-bounds for the solution, which in turn are  obtained by $L^\infty(L^6)$ energy estimates.
Note that the boundary conditions on $\Gamma_b\cup\Gamma_l$ considered there are different from the ones we are imposing in \eqref{eq:bc}. 
Kukavica and Ziane considered in \cite{Ziane2007, Ziane2008} the primitive equations subject to the boundary conditions on $\Gamma_u\cup\Gamma_b$ as in \eqref{eq:bc} and they  
proved global strong well-posedness of the primitive equations with respect to arbitrary $H^1$-data. For a different approach see also Kobelkov \cite{Kobelkov2007}. 

Modifications of the primitive equations dealing with either only horizontal viscosity and diffusion or with horizontal or vertical 
eddy diffusivity were recently investigated by Cao and Titi in \cite{CaoTiti2012}, by Cao, Li and Titi in \cite{CaoLiTiti2014_a, CaoLiTiti2014_b, CaoLiTiti2014_c}.      
Here, global well-posedness results are established  for initial data in $H^2$. For recent results concerning the presence of vapor, we refer to the work of 
Coti-Zelati, Huang, Kukavica, Teman and Ziane \cite{Ziane2015}.

The existence of a global attractor for the primitive equations was  proved by Ju \cite{Ju2007} and its properties were investigated  
by Chueshov \cite{Chueshov2014}.

For local well-posedness results concerning the inviscid primitive equations, we refer to Brenier \cite{Brenier1999}, Masmoudi and Wong \cite{MasmoudiWong2012}, Kukavica, Temam, Vicol and Ziane \cite{Kukavicaetall2011} as well as Hamouda, Jung and Temam \cite{HamoudaJungTemam2016}.

Recently, the first and third author of this paper developed in \cite{HieberKashiwabara2015}  an $L^p$-approach for  the primitive equations, hereby not taking into account the coupling of the momentum equation with 
temperature and salinity. They proved the existence of a \textit{unique, global strong solution} to the 
primitive equations for initial data $a \in V_{1/p,p}$ for $p \in  [6/5,\infty)$. Here,  $V_{1/p,p}$ denotes the complex interpolation space between the  
ground space $X_p$ and the domain of the hydrostatic Stokes operator, which was introduced and investigated in \cite{HieberKashiwabara2015}.
Choosing in particular $p=2$, the space of initial data $V_{1/2,2}$ coincides with the space $V$ introduced by Cao and Titi in \cite{CaoTiti2007} 
(up to a compatibility condition due to different boundary conditions), see also \cite{CaoTiti2007, Guillenetall2001, Ziane2007, Ziane2009}.
Note that  $V_{1/p,p} \hookrightarrow H^{2/p,p}(\Omega)^2$ for all $p \in (1,\infty)$. Hence, choosing $p\in [6/5,\infty)$ large, they  obtained a  global well-posedness result for initial data 
$a$ having less differentiability properties  than $H^1(\Omega)$. 

In this article we continue to develop the $L^p$ approach for the primitive equations, now  coupled to the heat and salinity diffusion-transport equations and aim  for a global strong 
well-posedness result for these equations subject to initial data which are allowed to be rougher than the ones being obtained in the $L^2$-setting and described in the above references. 
It should be emphasized that the coupled system cannot be approached considering velocity and diffusion-transport equations separately, in fact the coupled equations have to be solved simultaneously.
 
The aim of this article is threefold: First we extend the existing $L^p$-approach to the full primitive equations including temperature and salinity. The local existence result given in 
Section 5 allows us to take initial data in an interpolation space $V_{1/p,p}$, and a local solution is constructed by an iteration scheme which is inspired by the Fujita-Kato scheme for the Navier-Stokes equations.  
The smoothing effect of the three analytic semigroups involved (hydrostatic Stokes semigroup for the velocity, diffusion semigroups for temperature and salinity) regularizes the initial values such that solutions 
lie in  $H^{2,p}$ after short time. Unlike it is known for the Navier-Stokes equations, we show that  local solutions can be extended to global $L^p$ solutions by 
proving \textit{a priori} $L^{\infty}(H^2)$ estimates on the solutions in $L^2$, which by appropriate embeddings give sufficient bounds in $L^p$ as well. 
Note that there is an anisotropic structure in the primitive equations due to different scales for horizontal or vertical velocities including in particular the 
assumption of a hydrostatic balance in the vertical direction. Thus anisotropic estimates play a key role in the study of global strong well-posedness of the three dimensional 
primitive equations.  

Secondly, we show that the $L^p$-approach is not restricted to the case $p \in [6/5,\infty)$, as proved in \cite{HieberKashiwabara2015},  but extends to the full range of all $p \in (1,\infty)$. 
The latter assertion is based on an explicit  characterization of the  interpolation spaces $V_{\theta,p}$ in terms of boundary conditions, which is of own interest. 
Our characterization result is inspired by a related result of Amann \cite{Amann1993} where interpolation of boundary conditions is investigated for mixed boundary conditions for smooth domains and second order elliptic operators.

Thirdly, we consider forcing terms for velocity, temperature and salinity 
and prove the existence of a  unique, global, strong solution to the primitive equations, whenever the forces are in $H_{loc}^{1,2}((0,\infty);L^2(\Omega)\cap L^p(\Omega))$. 
Decay conditions on the right hand sides assure exponential decay of the solutions also in the case of external forces. Considering external forces is important for various situations, e.g. 
when considering periodic solutions to the primitive equations with large periodic forces, see \cite{GaldiHieberKashiwabara2015} and \cite{Gries2016}.

This article is organized as follows: In Section 2 we give an equivalent reformulation of the problem, and collect various facts 
about our functional setting  as well as on the linearized problem concerning in particular the hydrostatic Stokes and diffusion semigroups. Section 3 presents the main results 
on the existence of a unique, global, strong solution including decay properties in the case without salinity. Proofs are given in the subsequent sections characterizing first the space 
of initial values  in Section 4, then proving existence of a unique, local, strong solution in Section 5. Subsequently \textit{a priori} estimates are derived in Section 6 hereby proving the existence of 
a unique, global, strong solution. Finally, decay properties of strong solutions are investigated.

\section{Preliminaries}
We start by reformulating the  primitive equations equivalently as
\begin{align}\label{eq:primequiv}
\left\{
\begin{array}{rll}
\partial_t v + v \cdot \nabla_H v + w \cdot \partial_z v - \Delta v + \nabla_H \pi_s  & = f + \Pi(\tau,\sigma), &\text{ in } \Omega \times (0,T),  \\
\mathrm{div}_H \overline{v} & = 0, &\text{ in } \Omega \times (0,T),   \\
\partial_t \tau + v \cdot \nabla_H \tau + w \cdot \partial_z \tau - \Delta \tau & = g_{\tau}, &\text{ in } \Omega \times (0,T), \\
\partial_t \sigma + v \cdot \nabla_H \sigma + w \cdot \partial_z \sigma - \Delta \sigma & = g_{\sigma}, &\text{ in } \Omega \times (0,T),
\end{array}\right.
\end{align}
using  the notation
\begin{eqnarray*}
\div_H v= \partial_x v_1+ \partial_y v_2 & \hbox{and} & \overline{v}:=\frac{1}{h}\int_{-h}^0 v(\cdot,\cdot, \xi)d\xi,
\end{eqnarray*}
where we took into account the boundary condition  $w=0$ on $\Gamma_b$. Taking into account the boundary condition $w=0$ on $\Gamma_u$, the vertical component $w$ of the velocity  
$u$ is determined by 
\begin{eqnarray*}
w = -\int_{-h}^z \mathrm{div}_H v(\cdot,\cdot, \xi)d\xi.
\end{eqnarray*}
Furthermore, the pressure $\pi$ is determined by the surface pressure $\pi_s(x,y)= \pi(x,y,-h)$, while the part of the pressure due to temperature and salinity is given by
\begin{eqnarray*}
\Pi(\tau,\sigma) = - \nabla_H \int_{-h}^z \beta_{\tau}\tau(\cdot,\xi) - \beta_{\sigma}\sigma(\cdot,\xi) d\xi, \quad \beta_{\tau}, \beta_{\sigma}>0,
\end{eqnarray*}
compare \cite[Equations (2.4) and (2.60)]{Lionsetall1992_b}, \cite[Section 4]{Ziane2007} for the case with salinity and \cite[Subsection 2.1]{CaoTiti2007} for the case only with temperature. 

The boundary conditions \eqref{eq:bc} considered here for temperature and salinity on $\Gamma_b$ and $\Gamma_u$ are as in \cite[Equation (2.61)]{Lionsetall1992_b} 
which is also in agreement with those in \cite{CaoTiti2007} for the temperature. The periodicity on $\Gamma_l$ is chosen for consistency  with the velocity 
considered in \cite{HieberKashiwabara2015} or \cite[Remark 3.1]{Ziane2007}. For different choices of boundary conditions compare \cite[Equation (1.37) and (1.37)']{Lionsetall1992} 
to \cite[Equations (2.5), (2.61)]{Lionsetall1992_b}. 

The case of Neumann boundary conditions for the velocity on both $\Gamma_b$ and $\Gamma_u$ is considered in \cite{CaoTiti2007}, and such boundary conditions allow for a splitting of the 
linearized problem into a horizontal and a vertical part, which is not the case for the mixed boundary conditions considered here. 
 
Periodic boundary conditions in the horizontal direction are modeled using function spaces as in \cite[Section 2]{HieberKashiwabara2015}. In fact, a smooth function 
$f\colon \overline{\Omega}\rightarrow \R$ is called \textit{(horizontally) periodic of order $m$ on $\Gamma_l$} if for all $k = 0, \ldots, m$, where $m\in \N$,
\begin{eqnarray*}
\frac{\partial^{k}f}{\partial x^{k}}(0,y,z) = \frac{\partial^{k}f}{\partial x^{k}}(1,y,z) & \hbox{and} & \frac{\partial^{k}f}{\partial y^{k}}(x,0,z) = \frac{\partial^{k}f}{\partial y^{k}}(x,1,z). 
\end{eqnarray*}  
Periodicity for $g\colon \overline{G}\rightarrow \R$ is defined analogously. Considering 
\begin{align*}
C^{\infty}_{per}(\Omega)= \{\varphi \in C^{\infty}(\overline{\Omega}) \mid \hbox{$\varphi$ periodic of order $m$ on $\Gamma_l$ for all $m\in \N$} \},
\end{align*}
and 
\begin{align*}
C^{\infty}_{per}(G)= \{\varphi \in C^{\infty}(\overline{G}) \mid \hbox{$\varphi$ periodic of order $m$ on $\Gamma_l$ for all $m\in \N$} \},
\end{align*}
we define for $p\in(1,\infty)$ and $s\in [0,\infty)$
\begin{align*}
H^{s,p}_{per}(\Omega) := \overline{C^{\infty}_{per}(\Omega)}^{\norm{\cdot}_{H^{s,p}(\Omega)}} , \quad
H^{s,p}_{per}(G) := \overline{C^{\infty}_{per}(G)}^{\norm{\cdot}_{H^{s,p}(G)}},
\end{align*}
where $H^{0,p}_{per}:= L^p$. We denote by $H^{s,p}$ the Bessel potential space, which for $s\in \N$ coincides with the  classical Sobolev space, and if there is no 	ambiguity we write $H^{s}$ for $H^{s,2}$. Cylindrical  boundary value problems including periodicity are discussed in great detail in \cite{Nau2012}.

The linearized problem for the velocity is given by the \textit{hydrostatic Stokes equation}
\begin{align*}
\partial_t v -\Delta v +\nabla_H \pi_s &= f,\\
\div_H \overline{v} &= 0
\end{align*}
with initial value $v(0)=a$ and boundary conditions as in \eqref{eq:bc}. The study of the hydrostatic Stokes system has been commenced by Ziane in 
\cite{Ziane1995_a, Ziane1995_b}, where the $L^2$ situation was discussed. The general $L^p$ setting for $p\in (1,\infty)$ has been studied in detail in 
\cite[Section 3 and 4]{HieberKashiwabara2015}. In particular, it has been shown that the hydrostatic solenoidal space 
\begin{align*}
L^p_{\overline{\sigma}}(\Omega) &= \overline{\{v\in C^{\infty}_{per}(\Omega)^2 \mid \div_H \overline{v} = 0\}}^{L^{p}(\Omega)^2}
\end{align*}
is a closed subspace of $L^p(\Omega)^2$, compare \cite[Proposition 4.3]{HieberKashiwabara2015}. Furthermore, there exists a continuous projection $P_p$ 
onto it -- called the \textit{hydrostatic Helmholtz projection}, and one has $L^p_{\overline{\sigma}}(\Omega)= \Ran P_p$. In particular, 
\begin{align*}
L^p_{\overline{\sigma}}(\Omega)=\{v\in L^p(\Omega)^2 \mid \langle \overline{v}, \nabla_H \pi_s\rangle_{L^{p^{\prime}}(G)} = 0 \hbox{ for all } \pi_s \in H_{per}^{1,p^{\prime}}(G)\},  
\end{align*}
where $\tfrac{1}{p}+ \tfrac{1}{p^{\prime}}=1$. Following \cite{HieberKashiwabara2015} we then define the \textit{hydrostatic Stokes operator} $A_p$ by
\begin{align*}
A_p v := P_p \Delta v, \quad D(A_p) :=  \{v\in H_{per}^{2,p}(\Omega)^2 \mid (\partial_z v)\vert_{\Gamma_u} = 0, v\vert_{\Gamma_b} = 0 \} \cap L^p_{\overline{\sigma}}(\Omega).
\end{align*}
Resolvent estimates for $A_p$ have been proven in \cite[Theorem 3.1]{HieberKashiwabara2015}, and therefrom the following result was proved in \cite{HieberKashiwabara2015}. 

\begin{proposition}\label{prop:linear_velocity}
For $p\in (1,\infty)$, the operator $A_p$ generates an analytic semigroup $T_p(t)$ on $L^p_{\overline{\sigma}}(\Omega)$, which is exponentially stable with 
decay rate $\beta_v>0$. 
\end{proposition}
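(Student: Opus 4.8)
The plan is to read off analyticity directly from the resolvent estimates already available for $A_p$, to establish exponential stability first in the Hilbert space case $p=2$ by a coercivity argument based on a Poincar\'e inequality, and then to propagate the decay to all $p\in(1,\infty)$ by consistency of the family $\{T_p(t)\}_{p\in(1,\infty)}$ together with the smoothing of the semigroup. For the generation part, recall from \cite[Theorem 3.1]{HieberKashiwabara2015} that $-A_p$ is sectorial on $L^p_{\overline{\sigma}}(\Omega)$ of angle strictly less than $\pi/2$, i.e.\ (after a harmless spectral shift if necessary) there are $\theta\in(\pi/2,\pi)$ and $C>0$ so that $\lambda-A_p$ is boundedly invertible for $\lambda$ in the sector of half-angle $\theta$ about the positive real axis, with $\norm{\lambda(\lambda-A_p)^{-1}}_{\mathcal{L}(L^p_{\overline{\sigma}}(\Omega))}\le C$. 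Since $A_p$ is densely defined and closed on $L^p_{\overline{\sigma}}(\Omega)$ (see \cite{HieberKashiwabara2015}), the classical generation theorem for analytic semigroups, via the Dunford integral representing $e^{tA_p}$, then shows that $A_p$ generates a bounded analytic semigroup $T_p(t)$ on $L^p_{\overline{\sigma}}(\Omega)$. One also records that the resolvents $(\lambda-A_p)^{-1}$, hence the semigroups $T_p(t)$, form a consistent family in $p$ -- they agree on $L^p_{\overline{\sigma}}(\Omega)\cap L^q_{\overline{\sigma}}(\Omega)$ -- because $A_p$ and $A_q$ coincide on $D(A_p)\cap D(A_q)$, where both equal $P\Delta$ with consistent Helmholtz projections.

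Next I would prove exponential stability in the case $p=2$. For $v\in D(A_2)$, integration by parts together with the fact that $P_2$ is the orthogonal projection onto $L^2_{\overline{\sigma}}(\Omega)$ gives $\langle A_2 v,v\rangle_{L^2}=\langle\Delta v,v\rangle_{L^2}=-\norm{\nabla v}_{L^2(\Omega)}^2$, because all boundary integrals vanish: on $\Gamma_u$ since $\partial_z v=0$ there, on $\Gamma_b$ since $v=0$ there, and on $\Gamma_l$ by periodicity. Moreover, as $v|_{\Gamma_b}=0$, writing $v(x,y,z)=\int_{-h}^{z}\partial_z v(x,y,\xi)\,d\xi$ and using the Cauchy--Schwarz inequality yields the Poincar\'e estimate $\norm{v}_{L^2(\Omega)}^2\le h^2\norm{\partial_z v}_{L^2(\Omega)}^2\le h^2\norm{\nabla v}_{L^2(\Omega)}^2$. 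Hence $\Re\langle A_2 v,v\rangle_{L^2}\le -h^{-2}\norm{v}_{L^2(\Omega)}^2$, so that $A_2+h^{-2}$ is dissipative, and therefore $\norm{T_2(t)}_{\mathcal{L}(L^2_{\overline{\sigma}}(\Omega))}\le e^{-\beta_v t}$ for all $t\ge 0$ with $\beta_v:=h^{-2}$. If the resolvent estimates of \cite[Theorem 3.1]{HieberKashiwabara2015} already localize the spectrum of $A_p$ in a half-plane $\{\Re\lambda\le-\delta\}$ uniformly in $p$, then exponential stability is immediate and the following step is unnecessary.

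Finally I would pass to general $p\in(1,\infty)$. By analyticity, $T_p(t)$ maps $L^p_{\overline{\sigma}}(\Omega)$ into $D(A_p)\subset H^{2,p}_{per}(\Omega)^2$; iterating this finitely many times and using the Sobolev embeddings $H^{2,p}(\Omega)\hookrightarrow L^{q}(\Omega)$ with $\tfrac{1}{q}=\tfrac{1}{p}-\tfrac{2}{3}$ when $2p<3$, and $H^{2,p}(\Omega)\hookrightarrow L^\infty(\Omega)$ once $2p>3$, one obtains $t_0>0$ such that $T_p(t_0)$ is bounded from $L^p_{\overline{\sigma}}(\Omega)$ into $L^2_{\overline{\sigma}}(\Omega)\cap L^\infty(\Omega)^2$ (note $L^\infty(\Omega)\hookrightarrow L^2(\Omega)\cap L^p(\Omega)$ since $\Omega$ is bounded). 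For $t\ge 2t_0+1$, consistency gives $T_p(t)=T_p(t_0)\,T_2(t-2t_0)\,T_p(t_0)$; I estimate the middle factor by the $L^2$-decay of the previous step combined with the smoothing bound $\norm{T_2(s)}_{\mathcal{L}(L^2_{\overline{\sigma}}(\Omega),L^\infty(\Omega)^2)}\le Ce^{-\beta_v s}$ valid for $s\ge 1$ (again from analyticity and $H^2(\Omega)\hookrightarrow L^\infty(\Omega)$), and bound the two outer factors by their fixed operator norms. Together with the local boundedness of $T_p$ on $[0,2t_0+1]$, this yields $\norm{T_p(t)}_{\mathcal{L}(L^p_{\overline{\sigma}}(\Omega))}\le Me^{-\beta_v t}$ for all $t\ge 0$ with a suitable $M\ge 1$; in particular the decay rate $\beta_v$ may be kept independent of $p$. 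Alternatively, the range $1<p<2$ can be reduced to $p'>2$ by duality, using $A_p'=A_{p'}$ on $(L^p_{\overline{\sigma}}(\Omega))'\cong L^{p'}_{\overline{\sigma}}(\Omega)$.

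I expect the generation statement to be essentially a quotation of \cite[Theorem 3.1]{HieberKashiwabara2015}, so that the substantive content lies in the exponential stability. Within the argument above the delicate points are the identification $\langle A_2 v,v\rangle_{L^2}=-\norm{\nabla v}_{L^2(\Omega)}^2$ -- in particular the vanishing of the boundary integrals under the mixed Neumann/Dirichlet/periodic conditions, which is exactly what makes the Poincar\'e inequality applicable and thereby forces a \emph{strictly} negative spectral bound -- and the uniformity of the decay rate in $p$ through the consistency-and-smoothing bootstrap; these are the two steps I expect to require the most care.
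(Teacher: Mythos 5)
Your proposal is correct. Note, however, that the paper does not prove Proposition~\ref{prop:linear_velocity} at all: it simply quotes the resolvent estimates of \cite[Theorem 3.1]{HieberKashiwabara2015} and states that the proposition "was proved in \cite{HieberKashiwabara2015}". So there is no in-paper argument to compare against; what you have written is a self-contained reconstruction, and it is consistent with the ingredients the present paper does use elsewhere (in Step~2 of Lemma~\ref{apriori} the authors themselves invoke that $-A_2$ is the positive self-adjoint operator associated with the form $\ft_v$ and that $\lambda_1\norm{v}^2\le\norm{\nabla v}^2$, which is exactly your Dirichlet-trace Poincar\'e inequality on $\Gamma_b$). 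Your three steps are all sound: generation from sectoriality of $-A_p$; strict dissipativity of $A_2+h^{-2}$ via $\langle A_2v,v\rangle=-\norm{\nabla v}^2$ (the boundary terms do vanish under the mixed Neumann/Dirichlet/periodic conditions, and the orthogonality of $P_2$ is what lets you drop the projection); and the transfer of the decay rate to all $p$ by consistency plus analytic smoothing through $D(A_p)\subset H^{2,p}_{per}(\Omega)^2$ and Sobolev embedding on the bounded domain. Two cosmetic remarks: the sharp constant in your vertical Poincar\'e inequality is $h^2/2$ rather than $h^2$, which only improves $\beta_v$; and the uniformity of $\beta_v$ in $p$, while true by your bootstrap, is more than the proposition demands. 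The genuinely delicate points are exactly the two you flag.
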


\begin{remark}\label{pis}
After solving the equation $\partial_t v  - A_p v = P_p f$, $v(0) \in L^p_{\overline{\sigma}}(\Omega)$, the pressure can be reconstructed using the fact that $\partial_t$ and $P_p$ commute by
\begin{align*}
\nabla_H \pi_s &= (\mathds{1}-P_p) f + (\mathds{1}-P_p)  \Delta v,
\end{align*}
since the gradient is injective on $H^1_{per}(G)\cap L_0^p(G)$, $L_0^p(G):=\{v\in L^p(G)\mid \int_G v = 0\}$. 
\end{remark}

Considering the diffusion equations
\begin{align*}
\partial_t \tau -\Delta \tau = g_{\tau}, \quad \tau(0)=b_{\tau}, \quad 
\partial_t \sigma -\Delta \sigma = g_{\sigma}, \quad \sigma(0)=b_{\tau},
\end{align*}
we define operators $\Delta_{\tau}$ for $\alpha >0$ and $\Delta_{\sigma}$ by 

\begin{align*}
\Delta_{\tau} \tau &= \Delta \tau,\quad
D(\Delta_{\tau}) = \{\tau \in H_{per}^{2,q_{\tau}}(\Omega)\mid (\partial_z \tau + \alpha \tau)\mid_{\Gamma_u} = 0,\quad \partial_z \tau\mid_{\Gamma_b} = 0   \}, \\
\Delta_{\sigma} \tau &= \Delta \sigma,\quad
D(\Delta_{\sigma}) = \{\sigma \in H_{per}^{2,q_{\sigma}}(\Omega)\mid \partial_z \sigma\mid_{\Gamma_u} = 0,\quad \partial_z \sigma\mid_{\Gamma_b} = 0   \}.
\end{align*}

These operators  were investigated in detail  by Nau in  \cite[Section 8.2.2]{Nau2012}, and therefrom and by direct computations we conclude the following result.

\begin{proposition}\label{prop:linear_diffusion}
Let $q \in (1,\infty)$. Then the operators $\Delta_{\tau}$ and $\Delta_{\sigma}$ are the generators of analytic contraction semigroups 
$T_{\tau}(t)$ and $T_{\sigma}(t)$ on $L^{q}(\Omega)$. Moreover, $T_{\tau}(t)$ is exponentially stable with decay rate $\beta_{\tau}>0$. 
\end{proposition}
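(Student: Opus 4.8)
The plan is to assemble the statement from three ingredients: (i) analyticity and generation on $L^q(\Omega)$, which is essentially already contained in Nau's treatment of cylindrical boundary value problems; (ii) the contraction property, which I would derive from the variational (form) structure of $\Delta_\tau$ and $\Delta_\sigma$ via the Beurling--Deny criteria; and (iii) the exponential decay of $T_\tau$, forced by a spectral gap coming from the Robin coefficient $\alpha>0$. For (i), note that $\Delta_\tau,\Delta_\sigma$ are uniformly elliptic second order operators on the cylinder $\Omega=G\times(-h,0)$ subject to periodicity on $\Gamma_l$ and to Robin, respectively Neumann, conditions on $\Gamma_u\cup\Gamma_b$; for exactly this class, \cite[Section~8.2.2]{Nau2012} provides the resolvent and operator-valued Fourier-multiplier estimates showing that $-\Delta_\tau$ and $-\Delta_\sigma$ are sectorial of angle $<\pi/2$ on $L^q(\Omega)$ for every $q\in(1,\infty)$, hence generate bounded analytic $C_0$-semigroups $T_\tau,T_\sigma$. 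It then remains to upgrade ``bounded analytic'' to ``analytic contraction'' and to prove the decay.

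For (ii), on $L^2(\Omega)$ the operators $-\Delta_\tau,-\Delta_\sigma$ are the nonnegative self-adjoint operators associated with the closed symmetric forms
\[
\fa_\tau(u,v)=\int_\Omega \nabla u\cdot\overline{\nabla v}\,dx+\alpha\int_{\Gamma_u}u\,\bar v\,dS,\qquad
\fa_\sigma(u,v)=\int_\Omega\nabla u\cdot\overline{\nabla v}\,dx,
\]
both with form domain $H^{1,2}_{per}(\Omega)$ (the Neumann/Robin conditions being natural and encoded by the forms, the periodicity being essential). Nonnegativity, using $\alpha>0$, already yields that $T_\tau,T_\sigma$ are self-adjoint contractions on $L^2$. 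I would then verify the Beurling--Deny criteria: for real $u\in H^{1,2}_{per}(\Omega)$ — a space stable under $u\mapsto u^+$ and $u\mapsto u\wedge1$, which preserve the periodicity traces — one has $\fa_\bullet(u^+,u^-)\le0$ and $\fa_\bullet(u\wedge1,(u-1)^+)\ge0$, because the gradient cross-terms vanish pointwise while the boundary term $\alpha\int_{\Gamma_u}(\cdot)$ has the correct sign since $u^+u^-=0$ and $(u\wedge1)(u-1)^+\ge0$ on $\Gamma_u$. Hence $T_\tau,T_\sigma$ are positivity preserving and $L^\infty$-contractive, thus submarkovian; by duality they are also $L^1$-contractive, and Riesz--Thorin interpolation gives contractivity on every $L^q$, $1\le q\le\infty$. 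By consistency these contraction semigroups agree, for $1<q<\infty$, with the analytic semigroups from step (i) (equivalently, analyticity on $L^q$ follows from submarkovianity together with analyticity on $L^2$). This proves the analytic contraction assertion for both operators.

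For (iii), the Robin term produces a Poincaré-type inequality $\fa_\tau(u,u)=\|\nabla u\|_{L^2}^2+\alpha\|u\|_{L^2(\Gamma_u)}^2\ge\lambda_1\|u\|_{L^2}^2$ with some $\lambda_1>0$: otherwise a normalized minimizing sequence would converge in $L^2$, by the compact embedding $H^{1,2}_{per}(\Omega)\hookrightarrow L^2(\Omega)$, to a nonzero constant whose trace on $\Gamma_u$ vanishes — a contradiction. Thus $-\Delta_\tau\ge\lambda_1$ on $L^2$, so $\|T_\tau(t)\|_{\cL(L^2)}\le e^{-\lambda_1 t}$. To carry the rate over to $L^q$ I would invoke that, $T_\tau$ being submarkovian with Gaussian-bounded kernel on the bounded cylinder $\Omega$, its $L^q$-spectrum is independent of $q$, so the spectral bound of the $L^q$-realization of $\Delta_\tau$ is $\le-\lambda_1<0$; since $T_\tau$ is analytic on $L^q$, the spectral mapping theorem gives $\omega_0(T_\tau)\le-\lambda_1$, hence $\|T_\tau(t)\|_{\cL(L^q)}\le Me^{-\beta_\tau t}$ for every $\beta_\tau\in(0,\lambda_1)$ and a suitable $M\ge1$. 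Alternatively one bypasses spectrum-independence: ultracontractivity on the bounded domain gives $T_\tau(1)\in\cL(L^q,L^2)\cap\cL(L^2,L^q)$, and the factorization $T_\tau(t)=T_\tau(1)T_\tau(t-2)T_\tau(1)$ for $t\ge2$ combined with the $L^2$-decay yields the claim directly.

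The hard part will be step (iii): transferring the $L^2$ spectral gap to a genuine exponential decay in $L^q$. On a cylinder with mixed periodic/Robin/Neumann conditions, neither the Gaussian kernel bounds nor the $q$-independence of the spectrum is entirely formal, so one must either quote the appropriate versions of these facts (which do apply here, the domain being bounded and the coefficients smooth) or argue in a self-contained way through ultracontractivity and the factorization above. By contrast, analyticity is taken over from \cite{Nau2012}, and the contraction property is a routine verification of the Beurling--Deny criteria.
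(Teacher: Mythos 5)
Your proposal is correct and follows the same route the paper intends: the paper dispatches this proposition with a citation to Nau \cite[Section 8.2.2]{Nau2012} for generation and analyticity plus unspecified ``direct computations,'' and your form-theoretic arguments (Beurling--Deny for the $L^q$-contractivity, the Robin--Poincar\'e inequality for the spectral gap of $-\Delta_\tau$, and ultracontractive factorization to transfer the decay to $L^q$) are precisely the standard computations being alluded to. No gaps.
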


We end this section by noting that  $H^{1,2}_{loc}((0,\infty);L^p(\Omega)\cap L^2(\Omega))$ denotes the space consisting of functions, which are in 
$H^{1,2}((0,T);L^p(\Omega)\cap L^2(\Omega))$ for any $T<\infty$.

\section{Main results}

After reformulating the original system \eqref{eq:prim} and \eqref{eq:bc} into its equivalent form \eqref{eq:primequiv}, we are now in the position to formulate the main results of this article.

\begin{theorem}[Existence of a Unique Global Strong Solutions]\label{theorem_globsol} \mbox{}\\
Let $p, q_{\tau}, q_{\sigma}\in(1,\infty)$ with  $q_{\tau},q_{\sigma}\in [\tfrac{2p}{3},p] \cap (1,p]$ and suppose that 
\begin{align*}
f&\in H^{1,2}_{loc}((0,\infty);L^p(\Omega)^2\cap L^2(\Omega)^2), \\
g_{\tau}&\in H^{1,2}_{loc}((0,\infty);L^{q_{\tau}}(\Omega)\cap L^{2}(\Omega)), \quad \quad g_{\sigma}\in H^{1,2}_{loc}((0,\infty);L^{q_{\sigma}}(\Omega)\cap L^{2}(\Omega)).
\end{align*}
a)  Assume that 
\begin{align*}
a\in \{ u \in H_{per}^{2/p,p}(\Omega)^2\cap L^p_{\overline{\sigma}}(\Omega) \mid v\mid_{\Gamma_b} = 0 \}, \quad b_{\tau} \in H_{per}^{2/{q_{\tau}},q_{\tau}}(\Omega), \quad 
b_{\sigma} \in H_{per}^{2/{q_{\sigma}},q_{\sigma}}(\Omega).
\end{align*}
Then there is a unique, global, strong solution to \eqref{eq:primequiv} and \eqref{eq:bc} satisfying
\begin{align*}
v &\in C^1((0,\infty);L^p_{\overline{\sigma}}(\Omega)) \cap C^0((0,\infty); D(A_p)), \\
\pi_s &\in C^0((0,\infty); H_{per}^{1,p}(G)\cap L_0^p(G)), \\
\tau &\in C^1((0,\infty);L^{q_{\tau}}(\Omega)) \cap C^0((0,\infty); D(\Delta_{\tau})), \\
\sigma &\in C^1((0,\infty);L^{q_{\sigma}}(\Omega)) \cap C^0((0,\infty); D(\Delta_{\sigma})).
\end{align*}
b) If in addition
\begin{eqnarray*}
a\in D(A_p) & \hbox{and} &  b_{\tau}\in D(\Delta_{\tau}),\quad  b_{\sigma}\in D(\Delta_{\sigma})
\end{eqnarray*}
then the above solution extends to $[0,\infty)$.
\end{theorem}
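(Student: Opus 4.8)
The plan is to recast \eqref{eq:primequiv}--\eqref{eq:bc} as an abstract coupled semilinear Cauchy problem for the triple $(v,\tau,\sigma)$. Applying the hydrostatic Helmholtz projection $P_p$ to the first line of \eqref{eq:primequiv} and recovering the vertical velocity as $w=-\int_{-h}^z\mathrm{div}_H v(\cdot,\cdot,\xi)\,d\xi$, the system takes the form
\begin{align*}
\partial_t v - A_p v &= P_p\bigl(-v\cdot\nabla_H v - w\,\partial_z v + \Pi(\tau,\sigma) + f\bigr), & v(0)&=a,\\
\partial_t \tau - \Delta_{\tau}\tau &= -v\cdot\nabla_H \tau - w\,\partial_z \tau + g_{\tau}, & \tau(0)&=b_{\tau},\\
\partial_t \sigma - \Delta_{\sigma}\sigma &= -v\cdot\nabla_H \sigma - w\,\partial_z \sigma + g_{\sigma}, & \sigma(0)&=b_{\sigma},
\end{align*}
with $\pi_s$ reconstructed afterwards as in Remark~\ref{pis}; here $A_p$, $\Delta_{\tau}$, $\Delta_{\sigma}$ generate the analytic semigroups $T_p$, $T_{\tau}$, $T_{\sigma}$ of Propositions~\ref{prop:linear_velocity} and~\ref{prop:linear_diffusion}, two of which are exponentially stable. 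First I would establish, in Section~4, the characterization of the interpolation space $V_{1/p,p}$ of \cite{HieberKashiwabara2015} associated with $A_p$ (and of its analogues for $\Delta_{\tau}$, $\Delta_{\sigma}$): at the smoothness level $2/p$ only the Dirichlet-type condition $v|_{\Gamma_b}=0$ is retained under interpolation -- the Neumann-type conditions on $\Gamma_u$ being invisible at this low regularity -- so that the data spaces in part a) coincide, with equivalent norms, with $V_{1/p,p}$, $V_{1/q_{\tau},q_{\tau}}$, $V_{1/q_{\sigma},q_{\sigma}}$. Thus a) becomes an existence-and-uniqueness statement at the natural scaling of a Fujita--Kato iteration.

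For the local theory (Section~5) I would pass to the mild formulation and run a contraction argument in a space of the form $C([0,T];V_{1/p,p}\times V_{1/q_{\tau},q_{\tau}}\times V_{1/q_{\sigma},q_{\sigma}})$ intersected with a weighted space encoding parabolic smoothing, that is $t^{1-1/p}v\in L^\infty((0,T);D(A_p))$ and its analogues for $\tau$, $\sigma$. The quadratic transport terms $v\cdot\nabla_H v+w\,\partial_z v$ -- with $w$ a vertical antiderivative of $\mathrm{div}_H v$, hence first order and nonlocal in $z$ -- and the coupling term $\Pi(\tau,\sigma)$ -- linear in $(\tau,\sigma)$ but again first order -- would be controlled via the decay estimates $\|T_p(t)\|_{\mathcal{L}(V_{\alpha,p},V_{\beta,p})}\lesssim t^{-(\beta-\alpha)}e^{-\beta_v t}$ and their counterparts for $T_{\tau}$, $T_{\sigma}$, together with anisotropic Hölder--Sobolev product estimates; the hypotheses $q_{\tau},q_{\sigma}\in[\tfrac{2p}{3},p]\cap(1,p]$ are precisely what is needed for the exchange of integrability between the velocity scale ($L^p$) and the scalar scales ($L^{q_{\tau}}$, $L^{q_{\sigma}}$) in these products to close. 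This yields a unique solution on a maximal interval $[0,T^{*})$, which parabolic regularity then upgrades to $v\in C^1((0,T^{*});L^p_{\overline{\sigma}}(\Omega))\cap C((0,T^{*});D(A_p))$, with the stated regularity for $\tau$, $\sigma$ and smoothness in the space variables for $t>0$.

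The global theory (Section~6) is where I expect the main obstacle. By the smoothing just described and by uniqueness, for every $t_0\in(0,T^{*})$ the solution on $[t_0,T^{*})$ is a solution in the $L^2$-framework, so it suffices to obtain \emph{a priori} bounds $\sup_{t\in[t_0,T]}\bigl(\|v(t)\|_{H^{2,2}}+\|\tau(t)\|_{H^{2,2}}+\|\sigma(t)\|_{H^{2,2}}\bigr)<\infty$ for every $T<T^{*}$: via the embedding $H^{2,2}(\Omega)\hookrightarrow H_{per}^{2/p,p}(\Omega)$ (valid for all $p\in(1,\infty)$) and its analogues these control the $V_{1/p,p}$-norm and its counterparts, so the continuation criterion of the local theory forces $T^{*}=\infty$, giving a). The $L^2$ bounds would be produced by the Cao--Titi and Kukavica--Ziane scheme \cite{CaoTiti2007,Ziane2007,Ziane2008}, adapted to the mixed boundary conditions \eqref{eq:bc}, to the presence of salinity, and to the forcing: $L^2$ energy estimates for $(v,\tau,\sigma)$; then $L^\infty_tL^6_x$-type estimates for $v$, exploiting that $\|w\|$ is controlled by $\|\nabla_H v\|$; then $H^1$ estimates; and finally $H^{2,2}$ estimates, the last using the time regularity $f,g_{\tau},g_{\sigma}\in H^{1,2}_{loc}((0,\infty);L^2(\Omega))$ (e.g. by differentiating the equations in time or by $L^2$-maximal regularity). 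Throughout, the genuinely three-dimensional anisotropic product estimates that reflect the hydrostatic scaling, and the special structure of $w$, are what make the argument go; the scalar equations feed back into the velocity estimate only through the first-order term $\Pi(\tau,\sigma)$, controlled by $\|\tau\|_{H^1}+\|\sigma\|_{H^1}$. Closing the passage from $H^1$ to $H^{2,2}$ in this coupled, forced, mixed-boundary setting is the delicate point.

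Finally, for part b): if $a\in D(A_p)$, $b_{\tau}\in D(\Delta_{\tau})$ and $b_{\sigma}\in D(\Delta_{\sigma})$, then along the now globally defined solution the right-hand sides of the abstract system are Hölder continuous in time with values in the respective ground spaces -- here the time regularity of $f$, $g_{\tau}$, $g_{\sigma}$ enters once more -- so the standard theory of semilinear equations with analytic generators promotes the mild solution to a classical one on all of $[0,\infty)$ and yields continuity up to $t=0$ in the graph norms, i.e. $v\in C([0,\infty);D(A_p))$, $\tau\in C([0,\infty);D(\Delta_{\tau}))$ and $\sigma\in C([0,\infty);D(\Delta_{\sigma}))$, which is the assertion of b).
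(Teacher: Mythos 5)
Your architecture coincides with the paper's: characterization of the interpolation spaces so that the data classes in a) are exactly $V_{1/p,p}$ and its analogues, a Fujita--Kato iteration in weighted spaces for the local theory (with the constraints $q_\tau,q_\sigma\in[\tfrac{2p}{3},p]\cap(1,p]$ entering exactly where you say, in the anisotropic product estimates for $v\cdot\nabla_H\zeta$ and $w\partial_z\zeta$ and in $\Pi(\tau,\sigma)$), \textit{a priori} $L^\infty(H^{2,2})$ bounds in the $L^2$ framework obtained by the Cao--Titi/Kukavica--Ziane scheme adapted to the coupling and the forcing (the paper does the $\partial_t$-estimate by difference quotients, using $f,g\in H^{1,2}_{loc}$), and globalization by restarting the local solver with a time step that is uniform because the $H^{2,2}$ bound controls the subcritical norm $V_{\delta+\varepsilon}$. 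For $p,q_\tau,q_\sigma\geq 2$ your outline is essentially the paper's proof.

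The genuine gap is the passage to the $L^2$ framework when $p\in(1,2)$ (and likewise for $q_\tau,q_\sigma\in(1,2)$). You assert that ``by the smoothing just described and by uniqueness, for every $t_0\in(0,T^{*})$ the solution on $[t_0,T^{*})$ is a solution in the $L^2$-framework.'' For $p\geq 2$ this is immediate from $D(A_p)\subset D(A_2)$, but for $p<2$ one smoothing step only gives $v(t_0)\in D(A_p)\subset H^{2,p}(\Omega)^2$, and by Sobolev embedding $D(A_p)\hookrightarrow V_{1/p_1,p_1}$ holds only for $p\geq \tfrac{3p_1}{2p_1+1}$, i.e.\ $p_1\leq \tfrac{p}{3-2p}$; for $p$ close to $1$ this barely improves the exponent, and one reaches $p_1=2$ in a single step only when $p\geq 6/5$. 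The paper closes this by a finite bootstrap along the sequence $p_{n+1}=\tfrac{3p_n}{2p_n+1}$, $p_0=2$, which decreases to $1$: any $p\in(1,2)$ lies between two consecutive terms, and finitely many restart-and-smooth steps (each using uniqueness to identify the restarted solution with the original one) lift the solution into $V_{1/2,2}$; only then do the $L^2$ \textit{a priori} bounds apply, and the resulting global $L^2$ solution is the global $L^p$ solution because $D(A_2)\subset D(A_p)$ for $p<2$. Without this iteration your argument covers only $p\geq 6/5$, the range of \cite{HieberKashiwabara2015}, not the full range $p\in(1,\infty)$ claimed in the theorem.
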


Considering the primitive equations without salinity we obtain the following result. 

\begin{theorem}[Decay at infinity] \label{thm_decay} \mbox{}\\
In addition to the assumptions of Theorem~\ref{theorem_globsol}, let $b_{\sigma}=0$ and $g_{\sigma}=0$, and assume that there are 
$\beta_f \geq \beta_{v}$, $\beta_{g_{\tau}} \geq \beta_{\tau}$, such that
\begin{align*}
\norm{f}_{L^p(\Omega)^2} = O(e^{-\beta_f t}) \hbox{ and }  \norm{g_{\tau}}_{L^{q_{\tau}}(\Omega)} = O(e^{-\beta_g t}), \hbox{ as } t\to \infty,
\end{align*}
where $\beta_v, \beta_{\tau}$ are given as in Proposition \ref{prop:linear_velocity} and \ref{prop:linear_diffusion}, respectively. Then the strong solution $(v, \pi_s, \tau)$ satisfies
\begin{align*}
\norm{\partial_t v}_{L^p} + \norm{\Delta v}_{L^p} = O(e^{-\beta_v t}),  \quad  \norm{\partial_t \tau}_{L^{q_{\tau}}} + \norm{\Delta\tau}_{L^{q_{\tau}}} = O(e^{-\beta_{\tau} t}), \quad  
\norm{\nabla_H \pi_s}_{L^p}= O(e^{-\beta t})
\end{align*}
as $t\to \infty$ and where  $\beta :=\min \{\beta_v, \beta_{\tau}\}$. 
\end{theorem}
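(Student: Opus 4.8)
The plan is to bootstrap from the already-established global $L^2$ a priori bounds (which Theorem \ref{theorem_globsol} rests on) to exponential decay, first at the level of the $L^2$-energy, then for the higher norms, and finally transferring to $L^p$ and $L^{q_\tau}$. First I would record that, because $b_\sigma=0$ and $g_\sigma=0$, the salinity equation has the unique solution $\sigma\equiv 0$, so the system reduces to the velocity--temperature coupling with forcing term $f+\Pi(\tau,0)$ in the momentum equation, where $\Pi(\tau,0)=-\nabla_H\int_{-h}^z\beta_\tau\tau\,d\xi$ depends linearly on $\tau$. The temperature equation is now \emph{decoupled} from $v$ only through the transport term $v\cdot\nabla_H\tau+w\cdot\partial_z\tau$; since $u$ is divergence-free and satisfies the boundary conditions making the transport term skew-symmetric in $L^2$, testing the $\tau$-equation with $\tau$ gives
\begin{align*}
\tfrac12\tfrac{d}{dt}\norm{\tau}_{L^2}^2 + \norm{\nabla\tau}_{L^2}^2 + \alpha\norm{\tau}_{L^2(\Gamma_u)}^2 = \langle g_\tau,\tau\rangle,
\end{align*}
and the Robin term together with the Poincaré-type inequality associated with $\Delta_\tau$ yields a coercivity constant; using $\norm{g_\tau}_{L^{q_\tau}}=O(e^{-\beta_g t})$ with $\beta_g\ge\beta_\tau$ and Gronwall, I get $\norm{\tau(t)}_{L^2}=O(e^{-\beta_\tau t})$.

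Next, feeding this decay of $\tau$ back into the momentum equation, I would run the corresponding $L^2$-energy estimate for $v$: testing with $v$, the transport terms vanish, the Stokes part contributes $\norm{\nabla v}_{L^2}^2$ with the Poincaré inequality on $D(A_p)$ (the boundary condition $v|_{\Gamma_b}=0$ is what makes this coercive, giving decay rate related to $\beta_v$), and $\langle\Pi(\tau,0),v\rangle$ is controlled by $\norm{\tau}_{L^2}\norm{v}_{L^2}\le C e^{-\beta_\tau t}\norm{v}_{L^2}$, while $\langle f,v\rangle$ is controlled using $\norm{f}_{L^p}=O(e^{-\beta_f t})$, $\beta_f\ge\beta_v$. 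Since $\beta_v$ and $\beta_\tau$ may differ, the bare energy estimate gives $\norm{v(t)}_{L^2}=O(e^{-\min\{\beta_v,\beta_\tau\}t})$; to recover the sharp rate $O(e^{-\beta_v t})$ in the final claim one uses that the $\nabla_H\pi_s$ estimate is stated with $\beta=\min\{\beta_v,\beta_\tau\}$ anyway, but $\partial_t v$ and $\Delta v$ are claimed with rate $\beta_v$. I would obtain the latter by a Duhamel/semigroup argument instead: write $v(t)=T_p(t-t_0)v(t_0)+\int_{t_0}^t T_p(t-s)P_p\big(f+\Pi(\tau,0)-v\cdot\nabla_H v-w\partial_z v\big)(s)\,ds$, use exponential stability of $T_p$ with rate $\beta_v$ from Proposition \ref{prop:linear_velocity}, and the fact (from the already-proven global regularity) that all the nonlinear and forcing terms in $X_p=L^p_{\overline\sigma}$ decay at least like $e^{-\min\{\beta_v,\beta_\tau\}t}$, so that a standard iteration on dyadic time intervals upgrades the rate to the slower of the two exponents for each quantity; for $\Delta v$ and $\partial_t v$ one applies maximal regularity of $A_p$ on shifted intervals together with the semigroup smoothing $\norm{A_p T_p(t)}\lesssim t^{-1}e^{-\beta_v t}$.

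The technical core — and the step I expect to be the main obstacle — is obtaining exponential decay of the \emph{higher} norms ($\norm{\Delta v}_{L^p}$, $\norm{\partial_t\tau}_{L^{q_\tau}}$, etc.) rather than merely the $L^2$- or $L^p$-norms of $v$ and $\tau$ themselves: this requires the anisotropic estimates already used for the global $L^\infty(H^2)$ bound, now run on a moving time window $[t-1,t]$ with weights $e^{\beta s}$, so that the constants in those estimates do not accumulate. Concretely I would differentiate the equations in time (or test with $-\Delta v$, $-\Delta\tau$), absorb the nonlinear terms using the $H^2$-smallness that is itself a consequence of decay of the lower-order norms, and close a Gronwall inequality for $e^{2\beta t}(\norm{\nabla v}_{L^2}^2+\norm{\nabla\tau}_{L^2}^2)$ and then for the $\partial_t$-norms; finally $H^{2,2}\hookrightarrow L^p$ for the relevant $p$ (this is exactly the embedding range $q_\tau\in[\tfrac{2p}{3},p]$ used in Section 6) transfers the decay to the $L^p$ and $L^{q_\tau}$ norms, and the pressure reconstruction formula of Remark \ref{pis}, $\nabla_H\pi_s=(\mathds{1}-P_p)(f+\Delta v)$, gives $\norm{\nabla_H\pi_s}_{L^p}=O(e^{-\min\{\beta_v,\beta_\tau\}t})$ since $f$ decays at rate $\beta_f\ge\beta_v$ and $\Delta v$ at rate $\beta_v$, while the coupling forces the pressure to inherit only $\beta=\min\{\beta_v,\beta_\tau\}$ once $\Pi(\tau,0)$ is absorbed into the right-hand side. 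I would present the $\tau$-decay first, then the $v$-decay, then the bootstrap to $H^2$ and $L^p$, then the pressure, matching the order in which the a priori estimates were derived in Section 6.
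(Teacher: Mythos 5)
Your route is genuinely different from the paper's. You propose weighted energy estimates: $L^2$-decay of $\tau$ and $v$ by testing with the solution, then weighted Gronwall inequalities for the $H^1$/$H^2$ norms run on moving unit time windows, and finally embeddings into $L^p$. The paper instead recycles the Fujita--Kato machinery of Proposition~\ref{prop:loc_ex}: since the exponentially decaying forcing is square-integrable in time, the global $L^2$ \textit{a priori} bounds give $\int_0^\infty(\norm{v}_{H^1}^2+\norm{\tau}_{H^1}^2)\,dt<\infty$, hence $\inf_{t>0}(\norm{v(t)}_{H^1}^2+\norm{\tau(t)}_{H^1}^2)=0$; picking a time $t_0$ at which these norms are small, one reruns the very same iteration scheme with exponential weights $e^{\tilde\beta_v s}$, $e^{\tilde\beta_\tau s}$, and the bilinear estimates of Lemma~\ref{lemma:51temp} close under the smallness of $\tilde k_0^{v,\infty}$, $\tilde k_0^{\tau,\infty}$, yielding exponential decay of the $V_\gamma$- and $\hat V_\gamma$-norms directly and then of the $D(A_p)$-, $D(\Delta_\tau)$-norms by smoothing and uniqueness. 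The payoff of the paper's route is that no new nonlinear estimates are required. Your route, by contrast, hinges on redoing the Cao--Titi-type $H^1$ estimate for $v$ (Step 3 of Lemma~\ref{apriori}) with exponential weights so that the Gronwall factors do not accumulate; this is precisely the step you label ``the main obstacle'' and leave as a plan rather than an argument. It is the crux of your approach, so as written the decay of $\norm{\Delta v}_{L^p}$ and $\norm{\partial_t v}_{L^p}$ is a program, not a proof; the moving-window idea is workable, but you should either carry it out or switch to the weighted fixed-point argument, which sidesteps it entirely.

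A second, concrete problem is your rate bookkeeping. The Duhamel/iteration upgrade cannot produce the rate $\beta_v$ for $v$ when $\beta_\tau<\beta_v$: the coupling term $\Pi(\tau,0)$ enters the momentum equation as a source decaying only like $e^{-\beta_\tau t}$, and
\begin{align*}
\int_0^t e^{-\beta_v(t-s)}e^{-\beta_\tau s}\,ds \;\sim\; e^{-\min\{\beta_v,\beta_\tau\}t}
\end{align*}
(up to a factor $t$ when the exponents coincide), so the convolution caps the achievable rate at $\min\{\beta_v,\beta_\tau\}$. Your own phrase ``upgrades the rate to the slower of the two exponents'' concedes this, yet you then claim the rate $\beta_v$ for $\partial_t v$ and $\Delta v$. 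The paper's proof in fact works throughout with $\tilde\beta_v\le\tilde\beta_\tau$ and $\tilde\beta_v=\min\{\beta_v,\beta_\tau\}-\varepsilon$, so you should not attempt to recover $\beta_v$ by iteration when $\beta_\tau<\beta_v$; state and prove the decay of $v$ at the rate $\min\{\beta_v,\beta_\tau\}$ (or arbitrarily close to it), which is what the argument actually delivers.
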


\begin{remarks}
a) Note that the global strong solution exists for arbitrary large data, and Theorem~\ref{theorem_globsol} and Theorem~\ref{thm_decay} generalize 
\cite[Theorem 6.1]{HieberKashiwabara2015} on the one hand side to the  non-isothermal situation and secondly  to the case of outer forces, which is important for example for treating 
the associated problem for periodic solutions. \\
b) Exponential decay for the salinity cannot be expected in general since the corresponding semigroup is not exponentially decaying due to the Neumann boundary conditions. 
\end{remarks}

\section{Interpolation Spaces}
In this section we give an explicit characterization  of the complex interpolation spaces space arising in the construction of local solutions given in Section 5.
To this end, consider
\begin{eqnarray*}
V_{\theta, p}:= [L^p_{\overline{\sigma}}(\Omega) , D(A_p)]_{\theta}, &  
\hat{V}_{\theta, q}^{\tau} := [L^{q}(\Omega), D(\Delta_{\tau})]_{\theta}, & \hat{V}_{\theta, q}^{\sigma} := [L^{q}(\Omega), D(\Delta_{\sigma})]_{\theta}
\end{eqnarray*}
for $0\leq \theta \leq 1$ and  where $[\cdot,\cdot]_{\theta}$ denotes the complex interpolation functor; see also \cite[Equation (4.10)]{HieberKashiwabara2015}. 
Then the above spaces are characterized as follows.   

\begin{proposition}\label{prop_vtheta}
Let $p, q\in (1,\infty)$. Then 
\begin{align*}
V_{\theta, p}&= \begin{cases} \{H^{2\theta,p}_{per}(\Omega)^2\cap L^p_{\overline{\sigma}}(\Omega)  \mid \partial_z v\mid_{\Gamma_u} = 0, v\mid_{\Gamma_b} = 0 \}, & 1/2+ 1/2p < \theta \leq 1, \\
\{H^{2\theta,p}_{per}(\Omega)^2\cap L^p_{\overline{\sigma}}(\Omega) \mid v\mid_{\Gamma_b} = 0 \}, & 1/2p < \theta < 1/2 + 1/2p, \\
H^{2\theta,p}_{per}(\Omega)^2\cap L^p_{\overline{\sigma}}(\Omega), & \theta < 1/2p, 
 \end{cases} \\
\hat{V}^{\tau}_{\theta, q}&= \begin{cases} \{H^{2\theta,q}_{per}(\Omega) \mid (\partial_z \tau + \alpha\tau)\mid_{\Gamma_u} = 0, \partial_z \tau\mid_{\Gamma_b} = 0 \}, & 1/2+ 1/2 q < \theta \leq 1, \\
H^{2\theta,q}_{per}(\Omega), & \theta < 1/2 + 1/2q,
\end{cases}\\
\hat{V}^{\sigma}_{\theta, q}&= \begin{cases} \{H^{2\theta,q}_{per}(\Omega) \mid \partial_z \sigma \mid_{\Gamma_u} = 0, \partial_z \sigma\mid_{\Gamma_b} = 0 \},\quad\quad\quad & 1/2+ 1/2 q < \theta \leq 1, \\
H^{2\theta,q}_{per}(\Omega), & \theta < 1/2 + 1/2q.
\end{cases}
\end{align*}
\end{proposition}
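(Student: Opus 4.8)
The plan is to reduce the characterization of $V_{\theta,p}$, $\hat V^{\tau}_{\theta,q}$ and $\hat V^{\sigma}_{\theta,q}$ to the known interpolation theory for the ``full'' Bessel potential scale on the cylinder $\Omega$ with periodic lateral boundary conditions, following the strategy of Amann \cite{Amann1993} for interpolation of boundary conditions, and then to track which boundary traces survive interpolation. First I would recall (or establish, using that $A_p$, $\Delta_\tau$, $\Delta_\sigma$ are invertible up to a shift and admit bounded imaginary powers / have bounded $H^\infty$-calculus, as follows from the resolvent estimates in \cite[Theorem 3.1]{HieberKashiwabara2015} and \cite[Section 8.2.2]{Nau2012}) the standard identification $[X_0,D(A)]_\theta = D(A^\theta)$ with equivalence of norms; this converts the three interpolation spaces into domains of fractional powers. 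The key structural input is that the operators in question are (up to the hydrostatic constraint for the velocity) realizations of the Laplacian with boundary conditions of the form $\partial_z v|_{\Gamma_b}$-type Dirichlet and $\partial_z$-type (possibly Robin) Neumann at top and bottom, together with periodicity on $\Gamma_l$; hence one can compare $D(A_p^\theta)$ with the analogous fractional power scale for the ``un-constrained'' operator on $H^{s,p}_{per}(\Omega)^2$ and use the retraction/coretraction furnished by the Helmholtz projection $P_p$ together with \cite[Proposition 4.3]{HieberKashiwabara2015}.

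The second step is the trace bookkeeping. For the scalar problems $\hat V^{\tau}_{\theta,q}$, $\hat V^{\sigma}_{\theta,q}$ one knows that $[L^q(\Omega),H^{2,q}_{per}(\Omega)]_\theta = H^{2\theta,q}_{per}(\Omega)$, and the boundary conditions defining $D(\Delta_\tau)$, $D(\Delta_\sigma)$ are first-order in the normal direction; the trace $\partial_z\tau|_{\Gamma_u}$ (resp.\ $\partial_z\sigma|_{\Gamma_u}$, and similarly at $\Gamma_b$) is well defined and continuous on $H^{s,q}(\Omega)$ precisely when $s>1+1/q$, i.e.\ $2\theta>1+1/q$, which gives the threshold $\theta>1/2+1/2q$. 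For $\theta$ below this threshold the boundary condition imposes no constraint (it is not even meaningful as a pointwise trace), so the interpolation space is all of $H^{2\theta,q}_{per}(\Omega)$; for $\theta$ above it, the boundary condition is a closed linear constraint that is preserved under interpolation (this is the content of Amann's result: the null space of a retraction-compatible boundary trace operator interpolates to the null space of the same operator on the intermediate space). I would phrase this as: the map $\tau\mapsto(\tau,\partial_z\tau|_{\Gamma_u},\partial_z\tau|_{\Gamma_b})$ (with the Robin modification for $\tau$) is a retraction from $H^{2,q}_{per}$ onto a product space, compatible with the corresponding retraction on $H^{2\theta,q}_{per}$, and apply the interpolation property of complemented subspaces.

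For $V_{\theta,p}$ the only new feature is the two thresholds: the Dirichlet condition $v|_{\Gamma_b}=0$ requires only $2\theta>1/p$, while the Neumann-type condition $\partial_z v|_{\Gamma_u}=0$ requires $2\theta>1+1/p$. This yields the three regimes $\theta<1/2p$ (no condition), $1/2p<\theta<1/2+1/2p$ (only the zeroth-order Dirichlet trace survives), and $1/2+1/2p<\theta\le1$ (both). The solenoidal constraint $\div_H\overline v=0$ simply passes through because $L^p_{\overline\sigma}(\Omega)=\Ran P_p$ is a complemented subspace that is common to both endpoints, so intersecting with it commutes with interpolation. I expect the main obstacle to be the rigorous justification that the boundary-trace retraction on $D(A_p)$ (or on $H^{2,p}_{per}(\Omega)^2$) is compatible, in Amann's sense, with the one on the fractional-order spaces across the non-integer thresholds $1/2p$ and $1/2+1/2p$ — in particular constructing a common right inverse (a periodicity-preserving extension operator realizing the prescribed traces) that is bounded on the whole scale $H^{s,p}_{per}(\Omega)$ simultaneously, and checking that the borderline values $\theta=1/2p$, $\theta=1/2+1/2p$ are genuinely excluded (where the trace map fails to be surjective or even defined). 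Once that compatibility is in place, the three displayed formulas follow by inspecting which component of the product target space is non-trivial in each range of $\theta$, together with the elementary fact that $[L^p,H^{2,p}_{per}]_\theta=H^{2\theta,p}_{per}$ on the cylinder with periodic sides (for which I would cite \cite{Nau2012} and \cite[Section 2]{HieberKashiwabara2015}). \beweisqed
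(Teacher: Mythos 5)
Your overall strategy is the right one and matches the paper's in spirit: identify the thresholds $2\theta>1/p$ (zeroth-order Dirichlet trace) and $2\theta>1+1/p$ (first-order Neumann/Robin trace) via Amann-type interpolation of boundary conditions, and pass the solenoidal constraint through the interpolation because $L^p_{\overline{\sigma}}(\Omega)=\Ran P_p$ is complemented in both endpoint spaces (the paper does this last step with \cite[1.7.1 Theorem 1]{Triebel1978}). The detour through $D(A_p^\theta)$ and bounded imaginary powers is unnecessary — the paper works directly with the complex interpolation functor and never needs $[X,D(A)]_\theta=D(A^\theta)$ — and invoking BIP for the hydrostatic Stokes operator would itself require justification not available in the cited sources.

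The genuine gap is the step you yourself flag as ``the main obstacle'' and then do not resolve: Amann's Theorem 5.2 is proved for second-order elliptic operators on domains with $C^\infty$ boundary, whereas $\Omega=G\times(-h,0)$ is a box with edges and carries periodic conditions on $\Gamma_l$. Constructing, directly on $\Omega$, a trace retraction $\tau\mapsto(\tau,\partial_z\tau|_{\Gamma_u},\partial_z\tau|_{\Gamma_b})$ with a periodicity-preserving coretraction bounded simultaneously on $L^p$ and $H^{2,p}_{per}$ is precisely what is not off-the-shelf here, and without it the ``complemented subspaces interpolate'' argument has nothing to act on. The paper's proof consists essentially of supplying this missing piece by a different device: it identifies $\Omega$ with $S^1\times S^1\times(-h,0)$, takes a finite smooth partition of unity $\{\varphi_i\}$, and uses $S v=\{\sqrt{\varphi_i}\,v\}_i$, $Ru=\sum_i\sqrt{\varphi_i}\,u_i$ as a coretraction--retraction pair onto $\bigoplus_i H^{s,p}_{b.c.}(\tilde\Omega_i)$, where each $\tilde\Omega_i$ is a copy of a $C^\infty$ domain $\tilde\Omega\supset\Omega$ whose boundary extends $\Gamma_u$ and $\Gamma_b$; Amann's theorem is applied on each $\tilde\Omega_i$, and Triebel's retract theorem transports the result back to the periodic, non-smooth setting. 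Supplying either this localization argument or an explicit boundary-compatible extension operator on the periodic box is what your proposal still needs to be complete.
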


\begin{proof}
Let us note first that, following the work of Amann \cite{Amann1993}, results on the interpolation of boundary conditions for Sobolev spaces are known for elliptic second operators 
on domains with  $C^{\infty}$-boundaries subject to  mixed boundary conditions on disjoint parts of the boundaries. In the following proof we construct retractions of 
interpolation couples from such a situation to the one considered here. 

Note first that there is a $C^{\infty}$ domain $\tilde{\Omega}$ extending $\Omega$ such that the boundary of $\tilde{\Omega}$ extends 
$\Gamma_u\subset \tilde{\Gamma_u}$ and $\Gamma_b\subset \tilde{\Gamma_b}$ for $\tilde{\Gamma_u}, \tilde{\Gamma_b}\subset \partial\tilde{\Omega}$. Such an $\tilde{\Omega}$ is 
schematically depicted in figure~\ref{fig:extension}.

Consider now a partition of unity of $\Omega$ with respect of the topology induced by the periodicity, that is, considering $\Omega$ with the topology of $S^1\times S^1 \times (-h,0)$. Since this space is compact, there is a finite covering $U_i$, $i\in I$, $\abs{I}<\infty$, and smooth partition of unity $\varphi_i\colon \Omega\rightarrow [0,1]$ with $\supp \varphi_i \subset U_i$ such that
\begin{align}\label{phi}
\sum_{i\in I} \varphi_i \equiv 1.
\end{align}

Take now for each $i \in I$ a copy $\tilde{\Omega}_i$ of $\tilde{\Omega}$. Assume that $U_i$ is sufficiently small such that $U_i$ can be identified -- taking advantage of the 
periodicity at $\Gamma_l$ -- with an open subset $\tilde{U}_i\subset \tilde{\Omega}_i$, compare figure~\ref{fig:Ui} where such $U_i=G_i \times (-h,0)$, $i= 1, \ldots 4$, are 
given with $\tilde{U}_i=\tilde{G}_i \times (-h,0)$, $\tilde{G}_i$ dashed.

\begin{figure}
\begin{center}
\begin{tikzpicture}[scale=1.0]

\draw[very thick] (-1,1) -- (1,1);
\draw[very thick] (-1,0) -- (1,0);
\draw[very thick] (-1,1) -- (-1,0);
\draw[very thick] (1,0) -- (1,1);
\draw (0,0.4) node[above=0pt] {$\Omega$};
\draw (0,1) node[above=3pt] {$\Gamma_u$};
\draw (0,0) node[below=3pt] {$\Gamma_b$};

\draw (2,0.4) node[above=0pt] {$\tilde{\Omega}$};
\draw (2,1) node[above=3pt] {$\tilde{\Gamma_u}$};
\draw (2,0) node[below=3pt] {$\tilde{\Gamma_b}$};

\draw[dashed] (-1,1) -- (-2,1);
\draw[dashed] (1,1) -- (2,1);
\draw[dashed] (1,0) -- (2,0);
\draw[dashed] (-1,0) -- (-2,0);

\draw[dashed] (2.5,1.5) arc (0:-90:0.5);
\draw[dashed] (2,0) arc (90:0:0.5);
\draw[dashed] (-2,0) arc (-270:-180:0.5);
\draw[dashed] (-2,1) arc (270:180:0.5);

\draw[dashed] (-3.5,1.5) -- (-3.5,-0.5);
\draw[dashed] (3.5,1.5) -- (3.5,-0.5);

\end{tikzpicture}
\caption{Extension of $\Omega$ to $\tilde{\Omega}$}\label{fig:extension}
\end{center}
\end{figure}
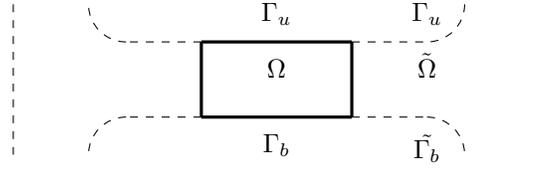

\begin{figure}
\begin{center}
\subfigure[$G_1$]{
\begin{tikzpicture}[scale=0.6]
\draw[thick] (0,0) arc (0:360:1);
\draw[very thick] (-2,1) -- (0,1);
\draw[very thick] (-2,-1) -- (0,-1);
\draw[very thick] (-2,1) -- (-2,-1);
\draw[very thick] (0,1) -- (0,-1);
\end{tikzpicture}
}
\subfigure[$G_2$]{
\begin{tikzpicture}[scale=0.6]
\draw[thick] (-2,-1) arc (-90:90:1);
\draw[dashed] (-2,-1) arc (270:90:1);
\draw[thick] (0,1) arc (-270:-90:1);

\draw[very thick] (-2,1) -- (0,1);
\draw[very thick] (-2,-1) -- (0,-1);
\draw[very thick] (-2,1) -- (-2,-1);
\draw[very thick] (0,1) -- (0,-1);
\end{tikzpicture}
}
\subfigure[$G_3$]{
\begin{tikzpicture}[scale=0.6]
\draw[thick] (-2,1) arc (-180:0:1);
\draw[dashed] (-2,1) arc (180:0:1);
\draw[thick] (-2,-1) arc (180:0:1);

\draw[very thick] (-2,1) -- (0,1);
\draw[very thick] (-2,-1) -- (0,-1);
\draw[very thick] (-2,1) -- (-2,-1);
\draw[very thick] (0,1) -- (0,-1);
\end{tikzpicture}
}
\subfigure[$G_4$]{
\begin{tikzpicture}[scale=0.6]
\draw[thick] (-1,1) arc (0:-90:1);
\draw[thick] (-2,0) arc (90:0:1);
\draw[thick] (0,0) arc (-270:-180:1);
\draw[thick] (0,0) arc (270:180:1);
\draw[dashed] (0,0) arc (-90:180:1);

\draw[very thick] (-2,1) -- (0,1);
\draw[very thick] (-2,-1) -- (0,-1);
\draw[very thick] (-2,1) -- (-2,-1);
\draw[very thick] (0,1) -- (0,-1);
\end{tikzpicture}
}
\caption{Covering $\{G_i\}_{i=1,\ldots,4}$ for $G \cong S^1 \times S^1$}\label{fig:Ui}
\end{center}
\end{figure}
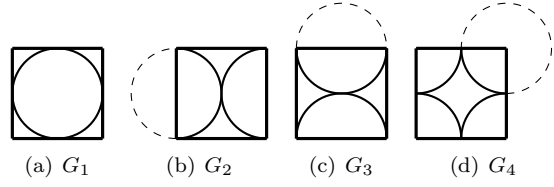
We then  define the co--retract $S$ and the corresponding retract $R$ by
\begin{align*}
S v =  \{\chi_i v\}_{i\in I}, \quad Ru = \sum_{i\in I} \chi_i u_i, \quad \hbox{where } \chi_i:= \sqrt{\varphi_i},
\end{align*}
respectively. This defines for $s\in [0,\infty)$ and $p\in (1,\infty)$ maps which are preserving  boundary conditions imposed on 
$\Gamma_u, \Gamma_b$ and $\tilde{\Gamma}_u, \tilde{\Gamma}_b$, respectively,
\begin{align*}
S\colon H_{per, b.c.}^{s,p}(\Omega)\rightarrow \bigoplus_{i\in I} H^{s,p}_{b.c.}(\tilde{\Omega}_i), \quad
R\colon \bigoplus_{i\in I} H^{s,p}_{b.c.}(\tilde{\Omega}_i) \rightarrow H_{per, b.c.}^{s,p}(\Omega),
\end{align*}
where abbreviating $H_{per, b.c.}^{s,p}(\Omega)$ and $H_{b.c.}^{s,p}(\tilde{\Omega})$ denote spaces with boundary conditions as considered here. From \eqref{phi} we conclude
 that $S\circ R \equiv \mathds{1}$ and that $R$ is indeed a retraction, and therefore \cite[Theorem 1.2.4]{Triebel1978} yields for $\theta \in [0,1]$ 
\begin{align*}
[L^p(\Omega), H_{per, b.c.}^{2,p}(\Omega)]_{\theta} 
&=  [R (\oplus_{i\in I} L^p(\tilde{\Omega}_i)), R  (\oplus_{i\in I} H_{b.c.}^{2,p}(\tilde{\Omega}_i))]_{\theta}\\
&= R (\oplus_{i\in I}[L^p(\tilde{\Omega}_i), H_{b.c.}^{2,p}(\tilde{\Omega}_i)]_{\theta}).
\end{align*}
Now, by \cite[Theorem 5.2]{Amann1993} 
\begin{align*}
[L^p(\tilde{\Omega}), H_{b.c.}^{2,p}(\tilde{\Omega})]_{\theta}= \begin{cases}
H^{2\theta,p}(\tilde{\Omega}) \hbox{ all b.c.} , & 1+1/p <\theta \leq 2, \\
H^{2\theta,p}(\tilde{\Omega}) \hbox{ only Dirichlet part}, & 1/p <\theta \leq 2, \\
H^{2\theta,p}(\tilde{\Omega}) \hbox{ without b.c.}, & 0 \leq \theta \leq 1/p.
\end{cases}
\end{align*}
Therefore,
\begin{align*}
[L^p(\Omega), H_{per, b.c.}^{2,p}(\Omega)]_{\theta} 
&= R (\oplus_{i\in I} [L^p(\tilde{\Omega}_i), H_{b.c.}^{2,p}(\tilde{\Omega}_i)]_{\theta})\\
&= \begin{cases}
H_{per}^{2\theta,p}(\Omega) \hbox{ all b.c} , & 1+1/p <\theta \leq 2, \\
H_{per}^{2\theta,p}(\tilde{\Omega}) \hbox{ only Dirichlet part}, & 1/p <\theta \leq 2, \\
H^{2\theta,p}_{per}(\tilde{\Omega}) \hbox{ without b.c.}, & 0 \leq \theta \leq 1/p.
\end{cases}
\end{align*}
Using finally \cite[1.7.1 Theorem 1]{Triebel1978} we conclude for the velocity
\begin{align*}
[L^p_{\overline{\sigma}}(\Omega), D(A_p)]_{\theta} 
&=  [L^p(\Omega)\cap L^p_{\overline{\sigma}}(\Omega), H_{per,b.c.}^{2,p}(\Omega)\cap L^p_{\overline{\sigma}}(\Omega)]_{\theta}\\
&= [L^p(\Omega), H_{per,b.c.}^{2,p}(\Omega)]_{\theta}\cap L^p_{\overline{\sigma}}(\Omega).
\end{align*}

\end{proof}

For the construction of local solutions the following lemma will be needed.

\begin{lemma} \label{lemma:46temp}
Let $p, q\in (1,\infty)$, $0\leq \theta_1, \theta_2 \leq 1$ with $\theta_1 + \theta_2 \leq 1$. Then 
\begin{itemize}
\item[(a)] $V_{\theta,p} \subset H^{2\theta, p}(\Omega)^2$ and $\hat{V}_{\theta,q}^{\tau} \subset H^{2\theta, q}(\Omega)$,  $\hat{V}_{\theta,q}^{\sigma} \subset H^{2\theta, q}(\Omega)$, $0\leq\theta\leq 1$;
\item[(b)] There exist constants $C_v, C_{\tau}, C_{\sigma}>0$ and $\beta_v,\beta_{\tau}>0$ such that for $t>0$
\begin{align*}
\norm{e^{t A_p}f}_{V_{\theta_1+ \theta_2,p}} &\leq C_v t^{-\theta_1} e^{-\beta_v t} \norm{f}_{V_{\theta_2,p}},  &f \in V_{\theta_2,p}, \\
\norm{e^{t \Delta_{\tau}}g_{\tau}}_{\hat{V}^{\tau}_{\theta_1+ \theta_2,q}} &\leq C_{\tau} t^{-\theta_1} e^{-\beta_{\tau} t} \norm{g_{\tau}}_{\hat{V}^{\tau}_{\theta_2,q}},  &g_{\tau} \in \hat{V}^{\tau}_{\theta_2,q}, \\
\norm{e^{t \Delta_{\sigma}}g_{\sigma}}_{\hat{V}^{\sigma}_{\theta_1+ \theta_2,q}} &\leq C_{\sigma} t^{-\theta_1} \norm{g_{\sigma}}_{\hat{V}^{\sigma}_{\theta_2,q}},  &g_{\sigma} \in \hat{V}^{\sigma}_{\theta_2,q};
\end{align*}
\item[(c)] $t^{\theta_1} \norm{e^{t A_p}f}_{V_{\theta_1+ \theta_2,p}}\to 0$, \\ $t^{\theta_1} \norm{e^{t \Delta_{\tau}} g_{\tau}}_{\hat{V}^{\tau}_{\theta_1+ \theta_2,q}}\to 0$ and $t^{\theta_1} \norm{e^{t \Delta_{\sigma}} g_{\sigma}}_{\hat{V}^{\sigma}_{\theta_1+ \theta_2,q}} \to 0$ as $t \to 0$.
\end{itemize}
\end{lemma}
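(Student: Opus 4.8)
The plan is to prove the three assertions in turn, reducing everything to standard facts about analytic semigroups generated by sectorial operators, combined with the interpolation-space description of Proposition~\ref{prop_vtheta}.

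\medskip

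\textbf{Part (a).} This is immediate from Proposition~\ref{prop_vtheta}. For $0\le\theta\le 1$ the spaces $V_{\theta,p}$, $\hat V^{\tau}_{\theta,q}$, $\hat V^{\sigma}_{\theta,q}$ are, by that proposition, closed subspaces of $H^{2\theta,p}_{per}(\Omega)^2$ resp.\ $H^{2\theta,q}_{per}(\Omega)$ cut out by boundary conditions (and, for the velocity, by the solenoidality constraint $L^p_{\overline\sigma}$), and $H^{2\theta,p}_{per}(\Omega)\hookrightarrow H^{2\theta,p}(\Omega)$ by definition of the periodic spaces as closures of $C^\infty_{per}$ in the $H^{2\theta,p}$-norm. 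In particular the inclusion maps are bounded, which is all that is needed later.

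\medskip

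\textbf{Part (b).} I would argue uniformly for each of the three semigroups; write $e^{tB}$ for $B\in\{A_p,\Delta_\tau,\Delta_\sigma\}$ and $V_\theta$ for the corresponding interpolation scale. By Propositions~\ref{prop:linear_velocity} and~\ref{prop:linear_diffusion}, $B$ generates an analytic semigroup on the ground space $X\in\{L^p_{\overline\sigma}(\Omega),L^q(\Omega)\}$, with $A_p$ and $\Delta_\tau$ exponentially stable with rates $\beta_v,\beta_\tau>0$ and $\Delta_\sigma$ merely bounded analytic. The first step is the standard analytic-semigroup smoothing estimate in powers of the generator: for $\gamma\ge 0$ there is $C$ with $\|(-B)^\gamma e^{tB}\|_{\mathcal L(X)}\le C t^{-\gamma}e^{-\beta t}$ for $t>0$ (with $\beta$ the decay rate, $\beta=0$ for $\Delta_\sigma$), which follows from the sectoriality of $B$ together with shifting by $\beta$ for the exponentially stable cases. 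The second step is to identify the interpolation spaces $V_\theta=[X,D(B)]_\theta$ with the domains of fractional powers up to equivalent norms: since $B$ (after the exponential shift, when relevant) is a sectorial operator with bounded imaginary powers --- which holds because, by the retraction argument in the proof of Proposition~\ref{prop_vtheta}, $D(B^\theta)$ coincides with the complex interpolation space, and one has the standard fact $[X,D(B)]_\theta=D(B^\theta)$ with equivalence of norms for operators admitting bounded $H^\infty$-calculus or at least bounded imaginary powers --- we get $\|x\|_{V_\theta}\simeq\|(-B)^\theta x\|_X+\|x\|_X$. Alternatively, and more elementarily, one avoids fractional powers entirely: estimate $\|e^{tB}f\|_{V_{\theta_1+\theta_2}}$ by interpolating the two endpoint bounds $\|e^{tB}f\|_X\le Ce^{-\beta t}\|f\|_X$ and $\|e^{tB}f\|_{D(B)}\le C(1+t^{-1})e^{-\beta t}\|f\|_X\le C't^{-1}e^{-\beta t}\|f\|_X$ (for $t$ in any bounded interval; for large $t$ the exponential wins) via the interpolation inequality $\|\cdot\|_{V_{\theta_1+\theta_2}}\le C\|\cdot\|_X^{1-\theta_1-\theta_2}\|\cdot\|_{D(B)}^{\theta_1+\theta_2}$ applied to $e^{tB}f$, and then doing the same one level up starting from $f\in V_{\theta_2}$ rather than $f\in X$, i.e.\ using that $e^{tB}$ restricts to an analytic semigroup on $V_{\theta_2}$ with the same type of smoothing estimate from $V_{\theta_2}$ into $V_{\theta_2+1}\cap D(B)$. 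Concretely: $e^{tB}$ maps $V_{\theta_2}$ into $D(B)$ with norm $\lesssim t^{-(1-\theta_2)}e^{-\beta t}$ and is bounded on $V_{\theta_2}$ with norm $\lesssim e^{-\beta t}$; interpolating with exponent $\theta_1/(1-\theta_2)\in[0,1]$ (using $\theta_1+\theta_2\le 1$) and $V_{\theta_1+\theta_2}=[V_{\theta_2},D(B)]_{\theta_1/(1-\theta_2)}$ (reiteration for the complex method, \cite[Theorem 1.9.3]{Triebel1978}) yields exactly $\|e^{tB}f\|_{V_{\theta_1+\theta_2}}\lesssim t^{-\theta_1}e^{-\beta t}\|f\|_{V_{\theta_2}}$. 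The case $B=\Delta_\sigma$ is the same with $\beta=0$. I expect the reiteration/identification of $[V_{\theta_2},D(B)]_{\theta_1/(1-\theta_2)}$ with $V_{\theta_1+\theta_2}$ to be the one point requiring a little care, since it uses that the $V_\theta$ form a complex interpolation scale; this is a consequence of the reiteration theorem for the complex method once one knows $V_\theta=[X,D(B)]_\theta$, which is the definition.

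\medskip

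\textbf{Part (c).} Fix $f\in V_{\theta_2}$. If $f$ additionally lies in $D(B)\subset V_{\theta_2+1}$, then by part (b) applied with exponents $\theta_1$ and $\theta_2+\varepsilon$... more simply: write $f=f_n+r_n$ with $f_n\in D(B)$ and $\|r_n\|_{V_{\theta_2}}\to 0$ (possible since $D(B)$ is dense in $V_{\theta_2}=[X,D(B)]_{\theta_2}$ for $\theta_2<1$, and for $\theta_2=1$ then $\theta_1=0$ and there is nothing to prove). Then $t^{\theta_1}\|e^{tB}f\|_{V_{\theta_1+\theta_2}}\le t^{\theta_1}\|e^{tB}f_n\|_{V_{\theta_1+\theta_2}}+t^{\theta_1}\|e^{tB}r_n\|_{V_{\theta_1+\theta_2}}$. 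For the first term, since $f_n\in D(B)\subset V_{1}$ and $\theta_1+\theta_2\le 1$, we have $e^{tB}f_n$ bounded in $V_{\theta_1+\theta_2}$ uniformly for $t\in(0,1]$ (by part (b) with $f_n\in V_{\theta_1+\theta_2}$, or just by continuity of $e^{tB}$ on $V_1$ and the embedding $V_1\hookrightarrow V_{\theta_1+\theta_2}$), so $t^{\theta_1}\|e^{tB}f_n\|_{V_{\theta_1+\theta_2}}\le Ct^{\theta_1}\to 0$ as $t\to 0$. For the second term, part (b) gives $t^{\theta_1}\|e^{tB}r_n\|_{V_{\theta_1+\theta_2}}\le C e^{-\beta t}\|r_n\|_{V_{\theta_2}}\le C\|r_n\|_{V_{\theta_2}}$ uniformly in $t$. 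Hence $\limsup_{t\to0}t^{\theta_1}\|e^{tB}f\|_{V_{\theta_1+\theta_2}}\le C\|r_n\|_{V_{\theta_2}}$ for every $n$; letting $n\to\infty$ gives the claim. The same density argument applies verbatim to $\Delta_\tau$ and $\Delta_\sigma$.

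\medskip

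The genuine obstacle, if any, is purely bookkeeping in part (b): making sure the smoothing estimate between the shifted interpolation scales is clean, i.e.\ that $e^{tB}$ really does act as an analytic semigroup on each $V_{\theta_2}$ with the stated gain of $1$ derivative-level and the same exponential rate. This is standard once one invokes that an analytic semigroup on $X$ restricts to an analytic semigroup on $D(B)$ and hence, by complex interpolation, on every $[X,D(B)]_{\theta_2}$, with the generator's part on $V_{\theta_2}$ still sectorial of the same angle and spectral bound; the exponential rates $\beta_v,\beta_\tau$ transfer because the spectral bound is interpolation-space independent for these operators. Everything else is the routine interpolation inequality plus the reiteration theorem \cite[Theorem 1.9.3]{Triebel1978}.
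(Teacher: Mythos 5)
Your proof is correct and follows essentially the route the paper intends: the paper omits the proof, referring to the analogous \cite[Lemma 4.6]{HieberKashiwabara2015}, whose argument is precisely the combination you use — the embedding from Proposition~\ref{prop_vtheta} (or exactness of the complex interpolation functor) for (a), the analytic-semigroup smoothing estimate transferred to the interpolation scale via reiteration for (b), and density of $D(B)$ in $[X,D(B)]_{\theta_2}$ for (c). The only point worth a remark is that (c) is of course meant for $\theta_1>0$, which your argument handles correctly.
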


The proof is analogous to \cite[Lemma 4.6]{HieberKashiwabara2015} and therefore omitted.

\section{Local Strong Solutions}
In this section we prove the existence of a unique, mild solution to \eqref{eq:primequiv} and \eqref{eq:bc} for initial data belonging to the spaces  $V_{\delta,p}$ and $\hat{V}_{\delta,p}^{\tau}$, $\hat{V}_{\delta,p}^{\sigma}$
defined above for suitable values of $\delta$. Our method is inspired by the Fujita-Kato approach for the Navier-Stokes equations and the one developed in
 \cite[Section 5]{HieberKashiwabara2015} for the primitive equations. 

Adopting the short hand notation 
\begin{align*}
\zeta := (\tau, \sigma), \quad g := (g_{\tau}, g_{\sigma}) \quad  \hbox{and} \quad b:=(b_{\tau}, b_{\sigma}),
\end{align*}
and setting for simplicity $q_{\tau} = q_{\sigma}$ along with
\begin{align*}
q:= q_{\tau} = q_{\sigma}, \quad \hat{V}_{q, \theta}:= \hat{V}^{\tau}_{q, \theta} \times \hat{V}^{\sigma}_{q, \theta} \quad \hbox{and} \quad \Delta_{\zeta}:= \Delta_{\tau} \oplus \Delta_{\sigma}, 
\end{align*}
where $\Delta_{\zeta}$ is now a vector valued Laplacian writing $\Delta_{q,\zeta}$ in case of ambiguity. Also we represent the non-linear terms by 
\begin{align}
F_p (v,\zeta) &:= - P_p \left( v \cdot \nabla_H v + w \partial_z v - \Pi(\zeta)\right), \label{Fp}\\
G_{q} (v,\zeta) &:= - \left(v \cdot \nabla_H \zeta + w \partial_z\zeta\right). \label{Gp}
\end{align}
Taking advantage of the product structure of $\Omega = G \times (-h,0)$, we introduce for $r,s\geq 0$ and $1\leq p,q \leq \infty$ the spaces
\begin{eqnarray*}
H_z^{r,q}H^{s,p}_{x,y}:= H^{r,q}((-h,0);H^{s,p}(G))
\end{eqnarray*}
equipped with the norm $\norm{v}_{H_z^{r,q}H^{s,p}_{x,y}}:= \norm{ \norm{v(\cdot,z)}_{H^{s,p}(G)}}_{H^{r,q}(-h,0)}$.  Note that 
$$
H^{r+s,p}(\Omega) \subset  H_z^{r,p}H^{s,p}_{x,y}.
$$ 
Applying the H\"{o}lder inequality separately for vertical and horizontal components, we derive
\begin{eqnarray*}
\norm{fg}_{L^{q}_z L_{xy}^{p}} \leq \norm{f}_{L_z^{q_1}L_{xy}^{p_1}} \norm{g}_{L_z^{q_2}L_{xy}^{p_2}}, \quad 
 \tfrac{1}{p}=\tfrac{1}{p_1}+\tfrac{1}{p_2}, \quad \tfrac{1}{q}=\tfrac{1}{q_1}+\tfrac{1}{q_2}.
\end{eqnarray*}
We also obtain the embedding properties   
\begin{eqnarray*}
H_z^{r,q}H^{s,p}_{x,y} \hookrightarrow H_z^{r^{\prime},q^{\prime}}H^{s,p}_{x,y}, &\hbox{whenever} & H_z^{r,q} \hookrightarrow H_z^{r^{\prime},q^{\prime}}, \\
H_z^{r,q}H^{s,p}_{x,y} \hookrightarrow H_z^{r,q}H^{s^{\prime},p^{\prime}}_{x,y}, &\hbox{whenever} & H^{s,p}_{x,y} \hookrightarrow H^{s^{\prime},p^{\prime}}_{x,y}.
\end{eqnarray*}

\begin{lemma}\label{lemma:51temp}
Let $p, q\in (1,\infty)$ be as in Theorem~\ref{theorem_globsol}, and let $\gamma(r) := \tfrac{1}{2} + \tfrac{1}{2r}$. 
Then $(F_p,G_{q})$ maps $V_{\gamma(p),p}\times \hat{V}_{\gamma(q),q}$ into $L^p_{\overline{\sigma}}(\Omega) \times L^{q}(\Omega)^2$, and there exists a constant $C>0$ such that
\begin{itemize}
\item[(a)] For $v\in V_{\gamma(p),p}$ and $\zeta\in \hat{V}_{\gamma(q),q}$ 
\begin{align*}
\norm{F_p (v,\zeta)}_{L^p_{\overline{\sigma}}(\Omega)} \leq &C \left(\norm{v}_{V_{\gamma(p),p}}^2 +  \norm{\zeta}_{\hat{V}_{\gamma(q),q}} \right), \\ 
\norm{G_{q} (v,\zeta)}_{L^{q}(\Omega)^2} \leq &C \left(\norm{v}_{V_{\gamma(q),q}}^2 
+\norm{\zeta}^2_{\hat{V}_{\gamma(q),q}}\right).
\end{align*} 
\item[(b)]  For $v, v^{\prime}\in V_{\gamma(p),p}$ and $\zeta, \zeta^{\prime} \in \hat{V}_{\gamma(q),q}$
\begin{align*}
\norm{F_p (v,\zeta) - F_p (v^{\prime},\zeta^{\prime})}_{L^p_{\overline{\sigma}}(\Omega)} 
\leq &C \left(\norm{v}_{V_{\gamma(p), p}} + \norm{v^{\prime}}_{V_{\gamma(p), p}}\right) \norm{v-v^{\prime}}_{V_{\gamma(p), p}}  \\
& \quad + C \norm{\zeta-\zeta^{\prime}}_{\hat{V}_{\gamma(q),q}} , \\
\norm{G_q (v,\zeta) - G_q (v^{\prime},\zeta^{\prime})}_{L^{q}(\Omega)^2} 
\leq &C \left(\norm{v}_{V_{\gamma(p),p}} + \norm{v^{\prime}}_{V_{\gamma(p),p}}\right) \norm{\zeta-\zeta^{\prime}}_{\hat{V}_{\gamma(q),q}}\\
& \quad + C \left(\norm{\zeta}_{\hat{V}_{\gamma(q),q}} + \norm{\zeta^{\prime}}_{\hat{V}_{\gamma(q),q}}\right) \norm{v-v^{\prime}}_{V_{\gamma(p),p}}.
\end{align*}
\end{itemize}
\end{lemma}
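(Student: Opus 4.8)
The plan is to estimate the nonlinear terms $F_p(v,\zeta)$ and $G_q(v,\zeta)$ termwise, treating the quadratic velocity transport terms $v\cdot\nabla_H v$ and $w\,\partial_z v$, the quadratic advection terms $v\cdot\nabla_H\zeta$ and $w\,\partial_z\zeta$, and the linear pressure contribution $\Pi(\zeta)$ separately. The key structural facts to exploit are: the embedding $V_{\gamma(p),p}\subset H^{2\gamma(p),p}(\Omega)^2=H^{1+1/p,p}(\Omega)^2$ from Lemma~\ref{lemma:46temp}(a) (and similarly for $\zeta$); the anisotropic product splitting $H^{r+s,p}(\Omega)\subset H_z^{r,p}H_{xy}^{s,p}$ together with the mixed H\"older inequality $\norm{fg}_{L_z^q L_{xy}^p}\le\norm{f}_{L_z^{q_1}L_{xy}^{p_1}}\norm{g}_{L_z^{q_2}L_{xy}^{p_2}}$ recalled just above the lemma; the fact that $w=-\int_{-h}^z\div_H v\,d\xi$ so that $\norm{w}$ in various mixed norms is controlled by $\norm{\nabla_H v}$ in one fewer vertical derivative's worth of integrability; and the boundedness of $P_p$ on $L^p_{\overline\sigma}(\Omega)$, which lets us drop the Helmholtz projection at the cost of a constant. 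Since $\Pi(\zeta)$ is linear in $\zeta$ and involves one horizontal derivative of a vertical integral of $\zeta$, it maps $\hat V_{\gamma(q),q}\subset H^{1+1/q,q}$ boundedly into $L^q$ (indeed into $H^{1/q,q}$), which explains why it enters the estimates only linearly; the hypothesis $q\in[\tfrac{2p}{3},p]\cap(1,p]$ is needed precisely so that $L^q$-based quantities embed into $L^p$-based ones where the two equations are coupled.

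First I would record the relevant Sobolev embeddings in the product spaces: from $v\in H^{1+1/p,p}(\Omega)^2$ one gets $v\in H_z^{1,p}H_{xy}^{1/p,p}$ and $\nabla_H v,\ \partial_z v\in H_z^{0,p}H_{xy}^{1/p,p}$ or $H_z^{1,p}H_{xy}^{0,p}$ after distributing the $1+1/p$ derivatives; then interpolate/embed the $(-h,0)$-factor via $H^{1,p}(-h,0)\hookrightarrow L^\infty(-h,0)$ and the $G$-factor via $H^{1/p,p}(G)\hookrightarrow L^{r}(G)$ for appropriate $r$ (using $\dim G=2$). For the term $v\cdot\nabla_H v$ I would put $v$ in $L_z^\infty L_{xy}^{r_1}$ and $\nabla_H v$ in $L_z^? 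L_{xy}^{r_2}$ with $1/r_1+1/r_2=1/p$, choosing the split so each factor is controlled by $\norm{v}_{V_{\gamma(p),p}}$; for $w\,\partial_z v$ I would similarly use $\norm{w}_{L_z^\infty L_{xy}^{r_1}}\lesssim\norm{\nabla_H v}_{L_z^1 L_{xy}^{r_1}}\lesssim\norm{v}_{V_{\gamma(p),p}}$ paired with $\partial_z v\in L_z^p L_{xy}^{r_2}$. The advection terms in $G_q$ are handled the same way but now with $\zeta\in H^{1+1/q,q}$ and $v$ measured in $V_{\gamma(q),q}$ (here the assumption $q\le p$ and the embedding $V_{\gamma(p),p}\hookrightarrow V_{\gamma(q),q}$, which follows from $H^{1+1/p,p}(\Omega)\hookrightarrow H^{1+1/q,q}(\Omega)$ on the bounded domain $\Omega$ combined with the characterization in Proposition~\ref{prop_vtheta}, lets us state everything in terms of $\norm{v}_{V_{\gamma(p),p}}$ if desired, though as written the lemma already uses $V_{\gamma(q),q}$). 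Part (b) then follows from part (a) by the standard bilinear trick: write $F_p(v,\zeta)-F_p(v',\zeta')$ as a sum of terms each of which is bilinear in a difference and a sum, e.g. $v\cdot\nabla_H v-v'\cdot\nabla_H v'=(v-v')\cdot\nabla_H v+v'\cdot\nabla_H(v-v')$, and apply the exact same product estimates; the linear $\Pi$-term contributes only $C\norm{\zeta-\zeta'}_{\hat V_{\gamma(q),q}}$.

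The main obstacle is the bookkeeping of exponents: one must verify that for the chosen H\"older splits the required Sobolev embeddings $H^{1/p,p}(G)\hookrightarrow L^{r}(G)$ (or fractional-order analogues on $(-h,0)$) actually hold with $r$ in the admissible range, i.e. that there is genuinely enough regularity in $V_{\gamma(p),p}=H^{1+1/p,p}$ to close the estimate with no derivative to spare — this is exactly why $\gamma(r)=\tfrac12+\tfrac1{2r}$ and not something smaller is the natural space. The anisotropy is essential here: isotropic Sobolev embedding of $H^{1+1/p,p}(\Omega)$ into $L^\infty$ fails in three dimensions for $p$ near $1$, but distributing derivatives as "one vertical, $1/p$ horizontal" and using $L^\infty$ in the one-dimensional vertical variable rescues the argument. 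I would also need to check the coupling constraint: in $F_p$ the term $\Pi(\zeta)\in L^q$ must be viewed in $L^p$, which requires $q\ge$ some lower bound depending on $p$; tracking this is where $q\in[\tfrac{2p}{3},p]$ gets used, and I would make sure the inequality $2/q-2/p\le$ (the number of spare derivatives, here $1/q$ or so) holds on that interval. Once the exponent arithmetic is pinned down, each estimate is a one-line application of mixed H\"older plus embeddings, and (b) is mechanical.
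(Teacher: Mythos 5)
Your proposal is correct and follows essentially the same route as the paper: anisotropic H\"older estimates in $L_z^{r}L_{xy}^{s}$ combined with the characterization $V_{\gamma(p),p}\subset H^{1+1/p,p}(\Omega)^2$, the representation of $w$ as a vertical integral of $\div_H v$, treatment of $\Pi(\zeta)$ as a linear term requiring the Sobolev embedding $H^{1+1/q,q}\hookrightarrow H^{1,p}$ (which is exactly where $q\ge 2p/3$ enters, while $q\le p$ is used for the advection terms), and the standard bilinear splitting for part (b). The only cosmetic difference is that the paper does not re-derive the pure velocity terms $v\cdot\nabla_H v+w\,\partial_z v$ but cites Lemma~5.1(a) of \cite{HieberKashiwabara2015}, whereas you propose to redo them with the same anisotropic technique.
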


\begin{proof}
Observe that 
\begin{align*}
F_p(v,\zeta) = F^*_p(v) + P_p\Pi(\zeta), 
\end{align*}
with $F^*_p (v) := - P_p \left( v \cdot \nabla_H v + w \partial_z v \right)$. By \cite[Lemma 5.1 (a)]{HieberKashiwabara2015} the term $F^*_p$ can be estimated by 
$F_p^*(v) \leq M \norm{v}^2_{V_{\gamma(p),p}}$ for some $M>0$, and it hence  remains to estimate $\norm{\Pi(\zeta)}_{L^{p}(\Omega)^2}$. 
Interchanging $\nabla_H$ and integration with respect to $z$, we obtain
\begin{eqnarray*}
\norm{\nabla_H \int_{-h}^z \tau(\cdot,\cdot,\xi) d\xi}_{L^p(\Omega)^2} = 
\norm{\int_{-h}^z \nabla_H \tau(\cdot,\cdot,\xi) d\xi}_{L^p(\Omega)^2},
\end{eqnarray*}
and hence by Jensen's inequality
\begin{align*}
\norm{\int_{-h}^z \nabla_H \tau(\cdot,\cdot,\xi) d\xi}_{L_p}^p &= \int_{\Omega} \left\vert\int_{-h}^z \nabla_H \tau(\cdot,\cdot,\xi) d\xi \right\vert^p \leq \int_{\Omega} \left(\int_{-h}^0 |\nabla_H \tau(\cdot,\cdot,\xi)| d\xi \right)^p \\
&\leq h^{p-1} \int_{-h}^0 \int_{G}  \left(\int_{-h}^0 |\nabla_H \tau(\cdot,\cdot,\xi)|^p\right) d\xi  \\
&\leq h^{p} \norm{\nabla_H \tau}_{L^p}^p  \leq h^{p} \norm{\tau}_{H^{1,p}(\Omega)}^p.
\end{align*}
For $\sigma$, an analogous statement holds, and therefore $\norm{\Pi(\zeta)}_{L^p(\Omega)^2}\leq h\norm{\zeta}_{H^{1,p}(\Omega)^2}$. The embeddings
\begin{align*}
\hat{V}_{\gamma(q),q} \subset H^{1+1/q,q}(\Omega) \subset H^{1,p}(\Omega), \quad \hbox{ for } \tfrac{2p}{3}\leq q,
\end{align*}
compare Lemma~\ref{lemma:46temp} $(a)$, combined with  usual Sobolev embeddings yield
\begin{eqnarray*}
\norm{P_p \Pi(\zeta)}_{L^p_{\overline{\sigma}}(\Omega)}  \leq \norm{\Pi(\zeta)}_{L^p(\Omega)^2}  \leq h C_{q,p} \norm{\zeta}_{\hat{V}_{\gamma(q),q}}
\end{eqnarray*}
for some $C_{q,p}>0$. Hence, the claim for  $F_p$ follows with $C:=\max\{M^*, h C_{q,p}\}$. 

In order to show the estimate for $G_q$ we use arguments analogous to the ones used in the proof of \cite[Lemma 5.1]{HieberKashiwabara2015}. In particular, 
for $v \in V_{\gamma(p),p}$ and $\zeta \in \hat{V}_{\gamma(q),q}$ we have
\begin{align*}
\norm{v \cdot \nabla_H \zeta}_{L^{q}(\Omega)^2} &\leq  \norm{v}_{L_z^{\infty}L^{2q}_{xy}} \norm{ \nabla_H \zeta}_{L_z^{q}L^{2q}_{xy}} \leq C \norm{v}_{L_z^{p}L^{2q}_{xy}}
\norm{\zeta}_{L_z^{q}H^{1,2q}_{xy}}, \\
&\leq C \norm{v}_{L_z^{p}H^{1+1/p,p}_{xy}} \norm{\zeta}_{L_z^{q}H^{1+1/q,q}_{xy}} \leq C \norm{v}_{H^{1+1/p,p}(\Omega)} \norm{\zeta}_{H^{1+1/q,q}(\Omega)} \\
&\leq C \norm{v}_{V_{\gamma(p),p}} \norm{\zeta}_{\hat{V}_{\gamma(q),q}}
\end{align*}
for some $ C>0$, where anisotropic H\"older estimates, the embeddings $L^{\infty}(-h,0)\subset L^{p}(-h,0)$, $H^{1+1/p,p}(G) \subset L^{2q}(G)$, $H^{1+1/q,q}(G) \subset H^{1,2q}(G)$ and Lemma~\ref{lemma:46temp} $(a)$ have been used. Similarly,
\begin{align*}
\norm{w \cdot \partial_z\zeta}_{L^q(\Omega)} &\leq  \norm{w}_{L_z^{\infty}L^{2q}_{xy}} \norm{ \partial_z \zeta}_{L_z^{q}L^{2q}_{xy}} \leq C  \norm{w}_{H^{1,p}_z L^{2q}_{xy}} \norm{ \partial_z \zeta}_{L_z^{q}H^{1/q,q}_{xy}} \\
&\leq C  \norm{\div_H v}_{L^{p}_z L^{2q}_{xy}} \norm{\zeta}_{H_z^{1,q}H^{1/q,q}_{xy}} \leq C  \norm{v}_{L^{p}_z H^{1+1/p,p}_{xy}} \norm{\zeta}_{H_z^{1,q}H^{1/q,q}_{xy}} \\
&\leq C \norm{v}_{H^{1+1/p,p}(\Omega)} \norm{\zeta}_{H^{1+1/q,q}(\Omega)} \leq C \norm{v}_{V_{\gamma(p),p}} \norm{\zeta}_{\hat{V}_{\gamma(q),q}},
\end{align*}
where the embedding
\begin{align*}
H^{1+1/p,p}(G) \subset H^{1,2q}(G) \quad \hbox{for } q\leq p
\end{align*}
has been used. The claim follows then by Young's inequality.

The assertion  $(b)$ follows from similar arguments and a detailed proof is omitted here.
\end{proof}

We now turn our attention to the iteration scheme. To this end, we fix $p,q\in (1,\infty)$ and introduce with a slight abuse of notation the abbreviations 
\begin{eqnarray*}
V_{\theta}:= V_{\theta,p} &\hbox{and} & \hat{V}_{\theta} := \hat{V}_{\theta,q}.
\end{eqnarray*}
For $T>0$, $\delta=\delta(r) = \tfrac{1}{r}$, $\gamma=\gamma(r)= \tfrac{1}{2} + \tfrac{1}{2r}$, $r\in \{p,q\}$, consider the spaces
\begin{eqnarray*}
S_T:= \left\{ v\in C^0([0,T];V_{\delta}) \cap C^0((0,T];V_{\gamma}) \colon \norm{v(t)}_{V_{\gamma}}= o(t^{\gamma-1}) \hbox{ as } t \to 0 \right\}, \\
\hat{S}_T:= \left\{ \zeta\in C^0([0,T];\hat{V}_{\delta}) \cap C^0((0,T];\hat{V}_{\gamma}) \colon \norm{\zeta(t)}_{\hat{V}_{\gamma}}= o(t^{\gamma-1}) \hbox{ as } t \to 0 \right\}.
\end{eqnarray*}
These become Banach spaces when equipped with the norms
\begin{align*}
\norm{v}_{S_T} :=& \sup_{0\leq s\leq T}\norm{v(s)}_{V_{\delta}} + \sup_{0\leq s\leq T}s^{1-\gamma}\norm{v(s)}_{V_{\gamma}},\\
\norm{\zeta}_{\hat{S}_T} :=& \sup_{0\leq s\leq T}\norm{\zeta(s)}_{\hat{V}_{\delta}} + \sup_{0\leq s\leq T}s^{1-\gamma}\norm{\zeta(s)}_{\hat{V}_{\gamma}}.
\end{align*}
The pair $(v,\zeta)$ with $v\in C([0,T]; V_{\delta})$, $\zeta\in C([0,T]; \hat{V}_{\delta})$ is called a \textit{mild solution} to the primitive equations,
if $v$ and $\zeta$ satisfy for $t\in [0,T]$
\begin{align*}
v(t) &= e^{t A_p} a + \int_0^t e^{(t-s)A_p}\left(P_p f(s) + F_p(v(s),\zeta(s))\right) ds, \\
\zeta(t) &= e^{t \Delta_{\zeta}} b + \int_0^t e^{(t-s)\Delta_{\zeta}}\left(g(s) + G_q(v(s),\zeta(s))\right) ds.
\end{align*}

Our local existence results reads as follows.

\begin{proposition}\label{prop:loc_ex}
Let $p,q\in(1,\infty)$ be as in Theorem~\ref{theorem_globsol} and $T>0$.   \\
a) Assume that $a\in V_{\delta}$, $b\in \hat{V}_{\delta}$ and that $P_p f\in C^0((0,T];L^p_{\overline{\sigma}}(\Omega))$ as well as $g\in C^0((0,T];L_q(\Omega)^2)$
satisfy 
\begin{eqnarray*}
\norm{P_p f(t)}_{L^p_{\overline{\sigma}}(\Omega)}= o(t^{2\gamma -2}) & \hbox{and} & \norm{g(t)}_{L_q(\Omega)^2}= o(t^{2\gamma -2}) \quad \hbox{as } t \to 0.
\end{eqnarray*}
Then there exists $T^* \in (0,T)$ and a unique, mild solution $(v,\zeta)\in S_{T^*}\times\hat{S}_{T^*}$ to \eqref{eq:primequiv} and \eqref{eq:bc}.\\
b) If in addition  $a\in V_{\delta + \varepsilon}$ and $b\in \hat{V}_{\delta+\varepsilon}$ for some $\varepsilon \in (0,1-\gamma]$, then 
\begin{align*}
v \in C^0([0,T^*]; V_{\delta+\varepsilon})\cap C^0((0,T^*]; V_{\gamma}), \quad\quad  \zeta \in C^0([0,T^*]; \hat{V}_{\delta+\varepsilon}) \cap C^0((0,T^*]; \hat{V}_{\gamma}),
\end{align*}
where $T^* := \min\{T^*_v, T^*_{\tau}, 1/2C^2\}$ for some $C>1$ depending only on $\Omega$ and $p,q$ and 
\begin{align*}
T^*_{v} = 32C^3 (\norm{a}_{V_{\delta+\varepsilon}} + C \max_{t\in [0,T]}\norm{f}_{L^p(\Omega)^2})^{-1/\varepsilon}, \quad 
T^*_{\tau} = 32C^3 (\norm{b}_{\hat{V}_{\delta+\varepsilon}} + C \max_{t\in [0,T]}\norm{g}_{L^p(\Omega)^2} ))^{-1/\varepsilon}.
\end{align*}
c) If in addition $f\in C^{\eta}((0,T];L^p(\Omega)^2)$  and $g\in C^{\eta}((0,T];L^q(\Omega)^2)$ for some $\eta\in (0,1)$, then the pressure $\pi_s$ described as in \eqref{pis} is well defined 
and $(v,\zeta,\pi_s)$ is a strong solution to \eqref{eq:primequiv} and \eqref{eq:bc}  on $(0,T^*]$. If $a\in D(A_p)$ and $b\in D(\Delta_{\zeta})$ then the solution extends to $[0,T^*]$.
\end{proposition}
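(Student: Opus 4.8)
\textbf{Proof proposal for Proposition~\ref{prop:loc_ex}.}
The plan is to run a contraction-mapping (Fujita--Kato) argument for the coupled fixed-point map
\[
\Phi(v,\zeta) = \left( e^{tA_p}a + \int_0^t e^{(t-s)A_p}\bigl(P_pf(s) + F_p(v(s),\zeta(s))\bigr)\,ds,\ \ e^{t\Delta_\zeta}b + \int_0^t e^{(t-s)\Delta_\zeta}\bigl(g(s) + G_q(v(s),\zeta(s))\bigr)\,ds \right)
\]
on a small ball of $S_{T^*}\times \hat S_{T^*}$. The key analytic inputs are Lemma~\ref{lemma:46temp} (the smoothing and decay estimates $\|e^{tA_p}f\|_{V_{\theta_1+\theta_2,p}} \le C_v t^{-\theta_1} e^{-\beta_v t}\|f\|_{V_{\theta_2,p}}$ with the analogues for $\Delta_\tau,\Delta_\sigma$, plus the $t^{\theta_1}\|\cdot\| \to 0$ property as $t\to 0$) and the bilinear estimates of Lemma~\ref{lemma:51temp} on $(F_p,G_q)$. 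For part a), I would first observe that the linear terms $e^{tA_p}a$, $e^{t\Delta_\zeta}b$ and the inhomogeneities $\int_0^t e^{(t-s)A_p}P_pf(s)\,ds$, $\int_0^t e^{(t-s)\Delta_\zeta}g(s)\,ds$ lie in $S_T\times\hat S_T$: membership in $C^0([0,T];V_\delta)$ and the $o(t^{\gamma-1})$ behaviour at $t=0$ follow from Lemma~\ref{lemma:46temp}(c) applied with $\theta_2 = \delta$, $\theta_1 = \gamma-\delta$, together with the hypotheses $\|P_pf(t)\|_{L^p_{\overline\sigma}} = o(t^{2\gamma-2})$, $\|g(t)\|_{L_q} = o(t^{2\gamma-2})$ (the exponent $2\gamma-2 = \delta-1$ is exactly what is dictated by $\int_0^t (t-s)^{\gamma-1}s^{2\gamma-2}\,ds < \infty$, since $\gamma \in (1/2,1)$). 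Then the Duhamel integrals of the nonlinear terms are estimated by combining $\|e^{(t-s)A_p}F_p(v(s),\zeta(s))\|_{V_\gamma} \le C_v(t-s)^{-\gamma}\|F_p(v(s),\zeta(s))\|_{V_0}$, the quadratic bound $\|F_p(v,\zeta)\|_{L^p_{\overline\sigma}} \le C(\|v\|_{V_\gamma}^2 + \|\zeta\|_{\hat V_\gamma})$, and $s^{1-\gamma}\|v(s)\|_{V_\gamma} \le \|v\|_{S_T}$ to produce a factor $\int_0^t (t-s)^{-\gamma}s^{2\gamma-2}\,ds = B(1-\gamma,2\gamma-1)\,t^{\gamma-1}$, hence a bound of the form $C'\,t^{\gamma-1}(\|v\|_{S_T}^2 + \|\zeta\|_{\hat S_T}^2 + \|\zeta\|_{\hat S_T})$; a parallel computation handles $G_q$. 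The same Beta-function bookkeeping gives the Lipschitz estimates from Lemma~\ref{lemma:51temp}(b). Because the linear parts can be made arbitrarily small in $S_{T^*}\times\hat S_{T^*}$ by taking $T^*$ small (by Lemma~\ref{lemma:46temp}(c) and the $o$-hypotheses on $f,g$), and the nonlinear contributions carry a positive power of $T^*$ or a factor of the ball radius, a standard Banach fixed-point argument on a ball of radius $\rho$ in $S_{T^*}\times\hat S_{T^*}$, with $\rho$ and $T^*$ chosen so that $\Phi$ maps the ball into itself and is a strict contraction, yields the unique mild solution.

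For part b), the improved-regularity claim, I would re-run the contraction not on $S_{T^*}\times\hat S_{T^*}$ but on the smaller space with $V_\delta$ replaced by $V_{\delta+\varepsilon}$ (and similarly for $\zeta$), which is admissible precisely because $\varepsilon \le 1-\gamma$ keeps $\gamma - (\delta+\varepsilon) \ge 0$ so that the smoothing estimate $\|e^{tA_p}a\|_{V_\gamma} \le C_v t^{-(\gamma-\delta-\varepsilon)}e^{-\beta_v t}\|a\|_{V_{\delta+\varepsilon}}$ still has an integrable singularity after Duhamel; the nonlinear Duhamel terms already land in $C^0([0,T^*];V_{\delta+\varepsilon})$ since they gain two derivatives. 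Tracking constants through the fixed-point inequalities (self-mapping of a ball of radius $\sim \|a\|_{V_{\delta+\varepsilon}} + C\max_{[0,T]}\|f\|_{L^p}$ plus contraction, the latter needing $\rho C < 1/2$ roughly) produces the explicit lifespan $T^* = \min\{T^*_v, T^*_\tau, 1/2C^2\}$ with $T^*_v, T^*_\tau$ of the stated form $32C^3(\cdots)^{-1/\varepsilon}$ — the exponent $-1/\varepsilon$ coming from balancing $T^{*\varepsilon}$ (the power of $T^*$ gained in the linear term's $V_{\delta+\varepsilon}\to V_\gamma$ estimate) against the data size.

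For part c), once $v\in C^0((0,T^*];V_\gamma) \subset C^0((0,T^*];H^{2,p})$ after the smoothing kicks in and $f\in C^\eta((0,T^*];L^p)$, standard parabolic regularity for the analytic semigroup $e^{tA_p}$ (Hölder-continuous inhomogeneity $\Rightarrow$ classical solution, \emph{cf.} the Pazy-type theory) upgrades the mild solution to $v\in C^1((0,T^*];L^p_{\overline\sigma}) \cap C^0((0,T^*];D(A_p))$ solving $\partial_t v - A_p v = P_pf + F_p(v,\zeta)$ in the strong sense — here one checks that $t\mapsto F_p(v(t),\zeta(t))$ is itself locally Hölder on $(0,T^*]$ using the Lipschitz bounds of Lemma~\ref{lemma:51temp}(b) and the already-established $C^0((0,T^*];V_\gamma)$ regularity — and likewise $\zeta\in C^1((0,T^*];L_q)\cap C^0((0,T^*];D(\Delta_\zeta))$. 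The pressure $\pi_s$ is then recovered by the formula in Remark~\ref{pis}, $\nabla_H\pi_s = (\mathds{1}-P_p)(f + \Delta v)$, which is well-defined and continuous in $H^{1,p}_{per}(G)\cap L_0^p(G)$ by the injectivity of $\nabla_H$ there. If moreover $a\in D(A_p)$ and $b\in D(\Delta_\zeta)$, continuity up to $t=0$ in the graph norm holds because $e^{tA_p}a \to a$ in $D(A_p)$ and the Duhamel term is already continuous at $0$ into $D(A_p)$, extending the solution to $[0,T^*]$.

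\textbf{Main obstacle.} The routine parts are the Beta-function estimates; the genuinely delicate point is bookkeeping the constants in part b) carefully enough to obtain the \emph{precise} stated lifespan $T^*_v = 32C^3(\|a\|_{V_{\delta+\varepsilon}} + C\max\|f\|_{L^p})^{-1/\varepsilon}$ with a \emph{single} constant $C$ serving simultaneously in the smoothing estimates, the bilinear estimates, and the self-mapping/contraction radii — this forces one to fix $C = \max\{C_v,C_\tau,C_\sigma, \text{(bilinear const.)}, 1\}$ at the outset and then verify every inequality closes with that choice, which is where the coupling between the velocity and the $(\tau,\sigma)$ equations (the $\Pi(\zeta)$ term feeding into $F_p$ and the $w\,\partial_z\zeta$ term feeding into $G_q$) has to be handled simultaneously rather than iteratively.
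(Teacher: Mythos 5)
Your proposal is correct and follows essentially the same route as the paper: a Fujita--Kato scheme in the weighted space $S_{T^*}\times\hat S_{T^*}$ built on Lemma~\ref{lemma:46temp}, the bilinear estimates of Lemma~\ref{lemma:51temp}, and the Beta-function identity $\int_0^t(t-s)^{-\gamma}s^{2\gamma-2}\,ds=B(1-\gamma,2\gamma-1)t^{\gamma-1}$, with the linear-in-$\zeta$ term $\Pi(\zeta)$ in $F_p$ absorbed via the extra factor $t^{1-\gamma}$ and the lifespan in b) coming from $t^{1-\gamma}\norm{e^{tA_p}a}_{V_\gamma}\lesssim t^{\varepsilon}\norm{a}_{V_{\delta+\varepsilon}}$. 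The only (cosmetic) difference is that you phrase it as a Banach fixed point on a small ball, whereas the paper runs the explicit successive approximations $(v_m,\zeta_m)$ with the inductive bounds $k_m^v,k_m^\zeta<1/4C^2$ and a telescoping series.
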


\begin{proof}
The subsequent  construction of local mild solutions follows the strategy described in \cite[Proposition 5.2]{HieberKashiwabara2015}, and it is based on  
Lemmas~\ref{lemma:46temp} and \ref{lemma:51temp} and an adaption of  the Fujita-Kato approach, see \cite{FujitaKato1964}, to the present situation. 
This method has been applied to the Bousinessq equation as well, see e.g. \cite{Hishida1991}. 

We start by defining  $(v_m,\zeta_m)\in S_T \times \hat{S}_T$, $m\in \N_0$, for $t >0$ by
\begin{align*}
v_0(t) &:= e^{t A_p} a + \int_0^t e^{(t-s)A_p}P_p f(s) ds, \\
\zeta_0(t)&:= e^{t \Delta_{\zeta}} b + \int_0^t e^{(t-s)\Delta_{\zeta}} g(s) ds, \\
v_{m+1}(t)&:= v_0(t) + \int_0^t e^{(t-s)A_p} F_p(v_m(s),\zeta_m(s)) ds, \\
\zeta_{m+1}(t)&:= \zeta_0(t) + \int_0^t e^{(t-s)\Delta_{\zeta}} G_q(v_m(s),\zeta_m(s)) ds.
\end{align*}
We  prove inductively that this sequence is well-defined in $S_T \times \hat{S}_T$, and that it converges in this space by proving
\begin{itemize}
\item[(a)] $(v_m,\zeta_m)\in S_T \times \hat{S}_T$ for all $m\in \N_0$,
\item[(b)] for $d_{m+1}= (v_{m+1},\zeta_{m+1}) - (v_m,\zeta_m) $ one shows that $d_{m}(0)=0$,
\item[(c)] there is some $C(T^*) < 1$ for $0< T^* \leq T$ sufficiently small such that
$$\sup_{0\leq s\leq t} s^{1-\gamma}\norm{d_{m+1}(s)}_{V_{\gamma}\times \hat{V}_{\gamma}}\leq C(T^*) \sup_{0\leq s\leq t} s^{1-\gamma} \norm{d_{m}}_{V_{\gamma}\times \hat{V}_{\gamma}}, \quad m\in \N_0,$$
\end{itemize}
and by showing  that the above limit is a mild solution. 
The induction basis for $v_0$ is covered already by \cite[Equation (5.5)]{HieberKashiwabara2015}; the proof for $\zeta_0$ is analogous and uses Lemma~\ref{lemma:46temp}. In particular,
\begin{align*}
t^{1-\gamma}\norm{v_0(t)}_{V_{\gamma}}&\leq t^{1-\gamma} \norm{e^{t A_p}a }_{V_{\gamma}} + C B(1-\gamma, 2\gamma -1) \sup_{0\leq s \leq t} \left( s^{2-2\gamma}\norm{P_p f(s)}_{L^p_{\overline{\sigma}}(\Omega)}\right),\\
t^{1-\gamma}\norm{\zeta_0(t)}_{\hat{V}_{\gamma}}&\leq t^{1-\gamma} \norm{e^{t \Delta_{\zeta}}b }_{\hat{V}_{\gamma}} + C B(1-\gamma, 2\gamma -1) \sup_{0\leq s \leq t} \left( s^{2-2\gamma}\norm{g(s)}_{L_q(\Omega)^2}\right),
\end{align*}
where $B(x,y)$, for $\Re x,\Re y >0$ denotes the Euler's beta function.

Proving the induction step $m \to m+1$, notice that by Lemma~\ref{lemma:51temp} one has $F_p(v,\zeta)\in L^p_{\overline{\sigma}}(\Omega)$, $G_q(v,\zeta)\in L^q(\Omega)^2$, for $v \in V_{\gamma}$, $\zeta \in\hat{V}_{\gamma}$.  Since by induction hypothesis $v_m\in S_T$ and $\zeta_m\in \hat{S}_T$, it follows by Lemma~\ref{lemma:51temp} (b) that 
\begin{eqnarray*}
F_p(v_m,\zeta_m) \in C^0((0,T];L^p_{\overline{\sigma}}(\Omega)) & \hbox{and} & G_q(v_m,\zeta_m) \in C^0((0,T];L^q(\Omega)^2).
\end{eqnarray*}
Now Lemma~\ref{lemma:46temp} (b) with $\theta _1=0$, $\theta_2=\gamma$ shows that $e^{(t-s)A_p}$ maps $V_0$ to $V_{\gamma}$. Hence $e^{(t-s)A_p} F_p(v_m,\zeta_m)\in C^0((0,T];V_{\gamma})$ and 
analogously $e^{(t-s)\Delta_{\zeta}} G_q(v_m,\zeta_m)\in C^0((0,T];\hat{V}_{\gamma})$. In addition
\begin{align*}
&\norm{\int_0^{t} e^{(t-s)A_p} F_p(v_m,\zeta_m) ds}_{V_{\gamma}}  \leq  t^{\gamma - 1}C B(1-\gamma, 2\gamma -1)  
[\sup_{0\leq s\leq t} \left(s^{1-\gamma} \norm{v_m(s)}_{V_{\gamma}}\right)^2 + t^{1-\gamma}\sup_{0\leq s\leq t} s^{1-\gamma} \norm{\zeta_m(s)}_{\hat{V}_{\gamma}}],\\
&\norm{\int_0^{t} e^{(t-s)\Delta_{\zeta}} G_q(v_m,\zeta_m) ds}_{\hat{V}_{\gamma}} 
\leq t^{\gamma - 1}C B(1-\gamma, 2\gamma -1)  
[\sup_{0\leq s\leq t} \left(s^{1-\gamma} \norm{v_m(s)}_{V_{\gamma}}\right)^2 + (\sup_{0\leq s\leq t} s^{1-\gamma} \norm{\zeta_m(s)}_{\hat{V}_{\gamma}})^2].
\end{align*}
Now, for $m\in \N_0$ and $t>0$ consider
\begin{eqnarray*}
k_m^v(t) := \sup_{0\leq s \leq t} s^{1-\gamma}\norm{v_m(s)}_{V_{\gamma}} & \hbox{and} &
k_m^{\zeta}(t) := \sup_{0\leq s \leq t} s^{1-\gamma}\norm{\zeta_m(s)}_{\hat{V}_{\gamma}},
\end{eqnarray*}
and notice that $\lim_{t\to 0}k_l^v(t)=0$ and $\lim_{t\to 0}k_l^{\zeta}(t)=0$ by induction hypothesis for $l \leq m$. The above estimates can be reformulated stating that there is a $C>1$ such that for $t>0$
\begin{align}
k_{m+1}^v(t) &\leq k_{0}^v(t) + C \left( (k_{m}^v(t))^2  + t^{1- \gamma}k_m^{\zeta}(t) \right),\label{eq:kmvau}  \\
k_{m+1}^{\zeta}(t) &\leq k_{0}^{\zeta}(t) + C \left( (k_{m}^v(t))^2  + (k_{m}^{\zeta}(t))^2 \right).\label{eq:kmtau}
\end{align}
Using the induction hypothesis, it follows that 
\begin{eqnarray*}
\lim_{t\to 0}k_{m+1}^v(t)=0 & \hbox{and} & \lim_{t\to 0}k_{m+1}^{\zeta}(t)=0.
\end{eqnarray*}
Hence $(v_m,\zeta_m)\in S_T \times \hat{S}_T$ for all $m\in\N_0$. Now, consider for $m\in\N_0$ and $t>0$
\begin{eqnarray*}
u_m(t):= v_{m+1}(t) -  v_{m}(t) & \hbox{and}  & \omega_m(t):= \zeta_{m+1}(t) -  \zeta_{m}(t).
\end{eqnarray*}
Using Lemma~\ref{lemma:51temp} we arrive at
\begin{align*}
&\sup_{0\leq s \leq t} s^{1-\gamma} \norm{u_{m+1}(s)}_{V_{\gamma}} 
\leq C[(k_{m}^v(t) + k_{m-1}^v(t)) \sup_{0\leq s \leq t} s^{1-\gamma} \norm{u_m(s)}_{V_{\gamma}}  + t^{1-\gamma} \sup_{0\leq s \leq t} s^{1-\gamma} \norm{\omega_m(s)}_{\hat{V}_{\gamma}}]
\end{align*}
\begin{align*}
&\sup_{0\leq s \leq t} s^{1-\gamma} \norm{\omega_{m+1}(s)}_{V_{\gamma}} 
\leq C [(k_{m}^v(t) + k_{m-1}^v(t)) \sup_{0\leq s \leq t} s^{1-\gamma} \norm{\omega_m(s)}_{\hat{V}_{\gamma}}  + (k_{m}^{\zeta}(t) + k_{m-1}^{\zeta}(t)) \sup_{0\leq s \leq t} 
s^{1-\gamma} \norm{u_m(s)}_{V_{\gamma}}].
\end{align*}

Inductively we prove that if $k_{0}^v(t) < 1/32 C^3$ and $k_{0}^{\zeta}(t) < 1/32 C^3$, where assuming $C>1$  and as in \eqref{eq:kmvau} and \eqref{eq:kmtau}, 
then $k_m^{v}(t), k_m^{\zeta}(t)< 1/4C^2$ for any $m\in\N_0$. In fact, the  base step is $m\in\{0,1\}$, and the induction step for $m>1$ is
\begin{align*}
k_{m+1}^v(t) &\leq k_0^v(t) + C \left\{k_{m}^v(t))^2 + k_0^v(t) + C \left[(k_{m-1}^v(t))^2 + (k_{m-1}^\zeta(t))^2\right] \right\},  \\
&\leq 1/32C^3 + 1/16C^3 + C/32 C^3 + C^2/16C^4 + C^2/16C^4 < 1/4C^2, \\
k_{m+1}^{\zeta}(t) &\leq k_0^{\zeta}(t) + C \left[(k_{m}^v(t))^2 + (k_m^\zeta(t))^2\right] \\ 
&\leq 1/32 C^3 + 1/16 C^3 + 1/16 C^3 < 1/4C^2.
\end{align*}
Hence, for $T^*$ satisfying $k_{0}^v(t) < 1/32 C^3$ and $k_{0}^{\zeta}(t) < 1/32 C^3$ for $t\in (0,T^*]$, and restricting $T^{*}$ such that also $C t^{1-\gamma}\leq 1/(2C) < 1$ for all $t\in [0,T^*]$, we have
\begin{align*}
&\sup_{0\leq s \leq t} s^{1-\gamma} \norm{u_{m+1}(s)}_{V_{\gamma}} 
+ \sup_{0\leq s \leq t} s^{1-\gamma} \norm{\omega_{m+1}(s)}_{\hat{V}_{\gamma}} 
\leq 1/C \left(\sup_{0\leq s \leq t} s^{1-\gamma} \norm{u_{m}(s)}_{V_{\gamma}} + \sup_{0\leq s \leq t} s^{1-\gamma} \norm{\omega_{m}(s)}_{\hat{V}_{\gamma}}\right).
\end{align*}
Using $t \leq (1/2C^2)^{1/(1-\gamma)}$ we have
\begin{align}\label{Tstar}
T^* := \min \left\{(1/2C^2)^{1/(1-\gamma)}, T_v^*, T_{\zeta}^* \right\},
\end{align}
where $T_v^*$ is such that $k_{0}^v(t) < 1/32 C^3$ and $T_{\zeta}^*$ such that $k_{0}^{\zeta}(t) < 1/32 C^3$. 
By a similar argument for the uniform convergence with respect to the $\norm{\cdot}_{V_{\delta}}$ and $\norm{\cdot}_{\hat{V}_{\delta}}$ in $[0,T^*]$, we see that the series
\begin{eqnarray*}
v(t):= v_0(t) + \sum_{m=0}^{\infty} u_m(t), & & \zeta(t):= \zeta_0(t) + \sum_{m=0}^{\infty} \omega_m(t),
\end{eqnarray*}
converge uniformly for $t\in (0,T^*]$ in $S_{T^*}\times \hat{S}_{T^*}$. 
In particular, 
\begin{eqnarray*}
\lim_{t\to 0}\sup_{0\leq s \leq t} s^{1-\gamma} \norm{v(s)}_{V_{\gamma}}=0 & \hbox{and} & \lim_{t\to 0}\sup_{0\leq s \leq t} s^{1-\gamma} \norm{\zeta(s)}_{\hat{V}_{\gamma}}=0.
\end{eqnarray*}
Hence $v$ and $\zeta$ are elements of $S_{T^*}$ and $\hat{S}_{T^*}$, respectively.

By the choice of $T^*$ we obtain  
\begin{align*}
\norm{F_p(v_m(s), \zeta_m(s)}_{L^p_{\overline{\sigma}}(\Omega)}  \leq  \frac{s^{\gamma - 1}}{2C} \quad \hbox{and} \quad  \norm{G_q(v_m(s), \zeta_m(s)}_{L^q(\Omega)^2} \leq \frac{s^{\gamma - 1}}{2C},
\end{align*}
where the right hand side is integrable on $(0,T^*)$. Therefore, by Lebesgue's theorem, we may interchange limit and integration which yields that $v,\zeta$ is a mild solution to \eqref{eq:primequiv} and \eqref{eq:bc}.

It remains to prove that the solution constructed this way is unique in $S_{T^*}\times\hat{S}_{T^*}$. This follows by combing Lemma \ref{lemma:51temp} with the argument given in \cite{HieberKashiwabara2015}.  

For $a\in V_{\delta + \varepsilon}, b\in \hat{V}_{\delta+\varepsilon}$ the proof of local existence differs only by showing that the limits of $v_0(s)$ and $\zeta_0(s)$ for 
$s\to 0$ exist in the $V_{\delta + \varepsilon}$ and $\hat{V}_{\delta+\varepsilon}$ norms, respectively. In order to estimate $T^*$ we use  \eqref{Tstar}. Note that by 
Lemma~\ref{lemma:46temp} for $\varepsilon > 0$
\begin{align*}
\norm{e^{t A_p} a }_{V_{\gamma}} \leq t^{\varepsilon} \norm{a}_{V_{\delta+\varepsilon}}, \quad
\norm{e^{t \Delta_{\zeta}} b }_{\hat{V}_{\gamma}} \leq t^{\varepsilon} \norm{b}_{\hat{V}_{\delta+\varepsilon}}
\end{align*}
and if $f(\cdot)\in C([0,T]; L^p(\Omega)^2)$ and $g(\cdot)\in C([0,T]; L^q(\Omega)^2)$
\begin{align*}
\sup_{0\leq s\leq t} s^{2-2\gamma}\norm{P_p f(s)}_{L^p_{\overline{\sigma}}(\Omega)} &\leq t^{\varepsilon} t^{2-2\gamma-\varepsilon} \sup_{t\in[0,T]}\norm{P_p f(s)}_{L^p_{\overline{\sigma}}(\Omega)},  \\
\sup_{0\leq s\leq t} s^{2-2\gamma}\norm{g(s)}_{L^q(\Omega)^2} &\leq t^{\varepsilon} t^{2-2\gamma-\varepsilon} \sup_{t\in[0,T]}\norm{g(s)}_{L^q(\Omega)^2}.
\end{align*}
Since $t^{2-2\gamma-\varepsilon} \leq 1$ for $t\in [0,1]$ and $\varepsilon \in (0,2-2\gamma)$ we may  simplify the calculation by choosing $T^*\leq 1$. Given $T_{v}^*$ and $T_{\zeta}^*$ we have 
 $k_{0}^v(t) < 1/32 C^3$ and  $k_{0}^{\zeta}(t) < 1/32 C^3$  and the claim follows by \eqref{Tstar}.

The assertion  $(c)$ can be proven analogously to the proof of \cite[Proposition 5.8]{HieberKashiwabara2015}, the details are omitted here.  
\end{proof}

\section{Global Well-Posedness}

Our  strategy to construct a unique, global, strong solution to \eqref{eq:primequiv} and \eqref{eq:bc} within the $L^p$-setting is to consider the $L^2$-situation first. To this end, \textit{a priori} estimates 
will be constructed. In the second step we consider the existence of unique, strong, local $L^p$ solution to \eqref{eq:primequiv} and \eqref{eq:bc}, which due to the 
regularization properties of the underlying linear equation, lies after short time, inside $L^2$. In the following we give a detailed proof of Theorem~\ref{theorem_globsol} only for the case 
$q_{\tau}= q_{\sigma}$; this simplifies the  notation considerably. The general case $q_{\tau}\neq q_{\sigma}$ can be treated in the same way. 
In order to simplify our notation further we set $\norm{\cdot}:=\norm{\cdot}_{L^2(\Omega)}$. 

\subsection{A priori estimates in $L^2$}

\begin{lemma}[\textit{A priori} estimates]\label{apriori}
Let  $a\in D(A_2)$, $b\in D(\Delta_{\zeta})$ for $q=2$, and
\begin{align*}
f\in H^{1,2}((0,T);L^2(\Omega)^2), \quad g \in H^{1,2}((0,T);L^2(\Omega)^2).
\end{align*}
Assume that $v, \pi_s, \zeta$ is a strong solutions to \eqref{eq:primequiv} and \eqref{eq:bc} on $[0,T]$. Then there are 
functions $B_{H^2}^{v}$, $B_{H^1}^{\pi_s}$, $B_{H^2}^{\zeta}$, continuous on $[0,T]$, such that for all $t\in [0,T]$
\begin{align*}
\norm{\zeta(t)}_{H^2(\Omega)^2}^2 \leq  B_{H^2}^{\tau}(t), \quad
\norm{v(t)}_{H^2(\Omega)}^2 \leq  B_{H^2}^{v}(t), \quad
\norm{\pi_s(t)}_{H^1(G)}^2 \leq  B_{H^1}^{\pi_s}(t),
\end{align*}
where the bounds depend on $\norm{b}_{H^2}$, $\norm{a}_{H^2}$, $\norm{f}_{H^{1,2}(L^2)}$, $\norm{g}_{H^{1,2}(L^2)}$ and $T$, only. 
\end{lemma}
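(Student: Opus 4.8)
The plan is to establish the a priori bounds in the classical hierarchy of estimates for the primitive equations, adapted to the present boundary conditions and to the coupling with temperature and salinity, but carried out systematically via anisotropic Sobolev estimates rather than ad hoc interpolations. First I would derive the basic $L^2$ energy estimate: testing the $v$-equation with $v$, the $\tau$- and $\sigma$-equations with $\tau$ and $\sigma$ respectively, using $\div u = 0$ and the boundary conditions \eqref{eq:bc} to kill the transport and pressure terms, and absorbing the coupling term $\Pi(\tau,\sigma)$ and the forcing via Young's inequality; the boundary term $\alpha\tau$ on $\Gamma_u$ is of good sign. This gives $v,\zeta \in L^\infty(0,T;L^2)\cap L^2(0,T;H^1)$ with bounds depending only on the data. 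Next, exploiting that $\bar v$ satisfies a two–dimensional Navier–Stokes–type equation and that the baroclinic mode $\tilde v = v - \bar v$ has better structure, I would obtain the crucial $L^\infty(0,T;L^6)$ (or $L^\infty L^4$) bound on $v$ by the Cao–Titi-type estimate, testing with $|v|^{4}v$ and controlling the pressure-gradient and vertical-velocity terms by anisotropic Hölder and Ladyzhenskaya inequalities together with the already-established $H^1$ bound on $\bar v$. The same kind of $L^p$ energy estimate applied to $\tau$ and $\sigma$ (here the Neumann/Robin conditions are harmless) yields $L^\infty L^6$ bounds on $\zeta$.

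With these in hand I would bootstrap to $H^1$: testing the momentum equation with $-\Delta v$ (equivalently with $A_2 v$) and the diffusion equations with $-\Delta\tau$, $-\Delta\sigma$, one estimates the nonlinear terms $u\cdot\nabla v$, $u\cdot\nabla\tau$, $u\cdot\nabla\sigma$ by splitting into horizontal and vertical parts: the horizontal parts $v\cdot\nabla_H(\cdot)$ are controlled using $v\in L^\infty L^6$ and anisotropic interpolation $\|f\|_{L^3}\lesssim \|f\|_{L^2}^{1/2}\|f\|_{H^1}^{1/2}$ in the horizontal variables, while the vertical part $w\,\partial_z(\cdot) = -\big(\int_{-h}^z \div_H v\big)\partial_z(\cdot)$ is the dangerous one and is handled by the anisotropic trick of writing $\|w\|_{L^\infty_z L^p_{xy}}$ in terms of $\|\div_H v\|_{L^1_z L^p_{xy}}$ and using the embeddings $H_z^{r,q}H^{s,p}_{x,y}$ introduced in Section 5. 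Gronwall's inequality, using the time-integrated $H^1$ bounds from the energy estimate and the forcing in $H^{1,2}(L^2)$, then closes $v,\zeta\in L^\infty(0,T;H^1)\cap L^2(0,T;H^2)$ with continuous-in-$t$ bounds. A further step, differentiating the equations in $t$ (or equivalently testing with $A_2^2 v$, $\Delta^2\zeta$) and using $f,g\in H^{1,2}(L^2)$, upgrades this to $\partial_t v,\partial_t\zeta \in L^\infty(0,T;L^2)$, hence by elliptic regularity for the stationary hydrostatic Stokes and diffusion problems to $v\in L^\infty(0,T;H^2)$ and $\zeta\in L^\infty(0,T;H^2)$, giving $B^v_{H^2}$ and $B^\zeta_{H^2}$. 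Finally, Remark~\ref{pis} recovers $\nabla_H\pi_s = (\mathds{1}-P_2)f + (\mathds{1}-P_2)\Delta v$, so $\|\pi_s\|_{H^1(G)}$ is bounded in terms of $\|f\|_{L^2}$ and $\|\Delta v\|_{L^2}$, yielding $B^{\pi_s}_{H^1}$; continuity in $t$ of all three bounds follows from continuity of the data and of the solution in the respective spaces.

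The main obstacle, as always for the three–dimensional primitive equations, is the vertical-velocity term $w\,\partial_z v$ (and likewise $w\,\partial_z\tau$, $w\,\partial_z\sigma$) at the $H^1\to H^2$ level: $w$ is only as regular as $\int_{-h}^z\div_H v$, one derivative worse than $v$ in the horizontal directions, so a naive Hölder estimate loses too much. This is precisely where the anisotropic function spaces $H_z^{r,q}H_{x,y}^{s,p}$ and the associated product and embedding inequalities recorded in Section 5 are indispensable: one must carefully distribute the available regularity between the vertical and horizontal factors so that the highest-order term is absorbed into the dissipation with only an $L^\infty L^6$-type coefficient in front, the finiteness of which was secured in the preceding step. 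The coupling terms $\Pi(\tau,\sigma)$ in the momentum equation are comparatively mild — by the Jensen-type computation in the proof of Lemma~\ref{lemma:51temp} they cost only one horizontal derivative of $\zeta$, which is already controlled — so the argument is genuinely a simultaneous Gronwall estimate for the triple $(v,\tau,\sigma)$ rather than anything requiring new ideas beyond the single-equation case of \cite{HieberKashiwabara2015}.
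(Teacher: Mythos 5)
Your plan follows essentially the same route as the paper's proof: the energy hierarchy $L^2 \to H^1 \to \partial_t\in L^\infty(L^2) \to H^2$ with anisotropic H\"older/Ladyzhenskaya estimates for the $w\,\partial_z$ terms and Gronwall, the only structural difference being that the paper outsources the $L^\infty(H^1)\cap L^2(H^2)$ velocity bound to \cite{HieberKashiwabara2015} and \cite{GaldiHieberKashiwabara2015} (whose proofs use precisely the Cao--Titi type strategy you sketch) and implements the time-derivative estimate rigorously via difference quotients rather than formal differentiation in $t$. One small correction: the pressure must be recovered from the full nonlinear equation, $\nabla_H\pi_s=(\mathds{1}-P_2)\{(f+\Pi(\zeta))-(\Delta v+v\cdot\nabla_H v+w\,\partial_z v)\}$, not from the linear formula of Remark~\ref{pis}; the extra terms are harmless since they are controlled by the already-established $H^2$ bound on $v$ and $H^1$ bound on $\zeta$.
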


\begin{proof}

\textit{Step 1: $L^2$ bound on the temperature and salinity}. \mbox{}\\
Multiplying temperature and salinity equations in \eqref{eq:primequiv} with  $\zeta$, and integrating over $\Omega$ we derive 
\begin{align*}
\int_{\Omega} \partial_t \zeta \cdot \zeta -\int_{\Omega} \Delta \zeta \cdot \zeta = - \int_{\Omega} (v \nabla_H \zeta \cdot \zeta + w\partial_z \zeta \cdot \zeta) + \int_{\Omega} g \cdot \zeta.  
\end{align*}
Integration by parts with respect to the horizontal components $x$ and $y$ yields
\begin{align*}
\int_{\Omega} v \nabla_H \zeta \cdot \zeta 
=& \frac{1}{2} \int_{\Omega} v_1 \partial_x \zeta^2 + v_2 \partial_y \zeta^2
= -\frac{1}{2} \int_{\Omega} \left(\partial_x v_1  + \partial_y v_2\right) \zeta^2  
= -\frac{1}{2} \int_{\Omega} (\div_H v) \zeta^2,
\end{align*}
where $\zeta^2 = (\tau^2, \sigma^2)$, and integrating by parts with respect to the vertical component $z$ gives
\begin{align*}
\int_{\Omega} w\partial_z \zeta \cdot \zeta
= \frac{1}{2} \int_{\Omega} \left(\int_0^z -\div_H v\right) \partial_z \zeta^2
= \frac{1}{2} \int_{\Omega} (\div_H v) \zeta^2.
\end{align*}
Hence, the non-linear terms vanish, and the equation simplifies to become
\begin{align*}
\frac{1}{2}\partial_t \norm{\zeta}^2 + \norm{\nabla \zeta}^2 + \alpha \norm{\tau}^2_{L^2(\Gamma_u)} = \int_{\Omega} g \cdot \zeta.
\end{align*}

Integrating with respect to $t$, and using Gronwall's lemma yields 
\begin{align*}
\norm{\zeta(t)}^2  &\leq  \left(\norm{b} + \int_0^t\norm{g(s)}^2 ds\right) e^{2t} =:B_{L^2, 1}^{\zeta}(t)\\
\int_0^t \norm{\nabla \zeta(s)}^2 ds &\leq \frac{1}{2}\left(\norm{b} + \int_0^t\norm{g(s)}^2 ds + \int_0^t\norm{\zeta(s)}^2 ds\right) \\
&\leq \frac{1}{2}\left(\norm{b} + \int_0^t\norm{g(s)}^2 ds + \int_0^t B_{L^2}^{\zeta}(s) ds\right)
=:B_{L^2, 2}^{\zeta}(t).
\end{align*}
Adding $\int_0^t\norm{\zeta(s)}^2 ds \leq t B_{L^2, 1}^{\zeta}(t)$ which uses monotonicity of $B_{L^2, 1}^{\zeta}(t)$ gives 
\begin{align*}
\norm{\zeta(t)}^2 + \int_0^t \norm{\zeta(s)}_{H^1(\Omega)^2}^2 ds \leq  
(1+t) B_{L^2, 1}^{\zeta}(t) + B_{L^2, 2}^{\zeta}(t) 
=:B_{L^2}^{\zeta}(t).
\end{align*}

\noindent
\textit{Step 2: $L^2$ bound on the velocity} \mbox{} \\
Multiplying the velocity equation \eqref{eq:primequiv} by $v=P_2v$, and integrating over $\Omega$ delivers while annihilating the pressure term
\begin{align*}
\int_{\Omega} \partial_t v \cdot v -\int_{\Omega} \Delta v \cdot v = - \int_{\Omega} (v \nabla_H v \cdot v + w\partial_z v \cdot v) + \int_{\Omega} P_2\left(f + \Pi(\zeta)\right) \cdot v.  
\end{align*}
A similar computation as for the temperature delivers that
\begin{align*}
\int_{\Omega} v \nabla_H v \cdot v + w\partial_z v \cdot v = 0.
\end{align*}
Since $-A_2$ is a positive self-adjoint operator associated with the form $\ft_v$ defined by
\begin{align*}
\ft_{v}[v,v^{\prime}]= \langle \nabla v, \nabla v^{\prime}\rangle_{L^2(\Omega)^{2\times 3}}, 
\, v,v^{\prime}\in \{ v\in H_{per}^{1,2}(\Omega)^2 \cap L^2_{\overline{\sigma}}(\Omega) \mid v\vert_{\Gamma_b} = 0 \},
\end{align*}
we have 
$\lambda_1 \norm{v}^2 \leq \norm{\nabla v}^2$ for $ v\in \{v\in H_{per}^1\cap X_2 \mid v\mid_{\Gamma_b}=0\}$,
where $\lambda_1>0$ is the smallest eigenvalue of $-A_2$, and hence for $\varepsilon \leq \lambda_1$
\begin{align*}
\partial_t\norm{v}^2 + \norm{\nabla v}^2 &\leq  \frac{1}{\varepsilon}\norm{f + \Pi(\zeta)}^2 +  \left(\varepsilon - \lambda_1 \right)\norm{v}^2 
\leq \frac{1}{\varepsilon}\norm{f + \Pi(\zeta)}^2 
\end{align*}
Recalling that $\norm{\Pi(\zeta)}^2 \leq  C \norm{\nabla_H \zeta}^2$ one obtains by integration
\begin{align} \label{vl2}
\norm{v(t)}^2 \leq \norm{a}^2 + \frac{1}{\varepsilon}\int_0^t\norm{f(s)}^2 ds + \frac{C}{\varepsilon}\int_{0}^t \norm{\nabla_H \zeta}^2 ds,  
\end{align}
and since $\int_{0}^t \norm{\nabla_H \zeta}^2 ds \leq B_{L^2}^{\zeta}(t)$ and choosing $\varepsilon = \lambda_1$ one gets
\begin{align*}
\norm{v(t)}^2  + \int_0^t\norm{\nabla v(s)}^2 ds \leq \norm{a}^2 + \frac{1}{\lambda_1}\int_0^t\norm{f(s)}^2 ds +\frac{C}{\lambda_1} B_{L^2}^{\zeta}(t)=:B_{L^2}^{v}(t).
\end{align*}

\textit{Step 3: $H^1$ bound on the velocity.} \mbox{} \\
\textit{A priori} bounds on the $H^1$ norm of the velocity have been proven for $f\equiv 0$ in \cite[Section 6]{HieberKashiwabara2015}. 
This has been adapted to the situation, where $f\in L^2((0,T);L^2(\Omega)^2)$ in \cite[Equation (4.1)]{GaldiHieberKashiwabara2015}. 
Now, consider the right hand side $f + \Pi(\zeta)$, where $f\in L^2((0,T);L^2(\Omega)^2)$ and $\Pi(\zeta)\in L^2((0,T);L^2(\Omega)^2)$ by Step 1. Hence,
 by \cite[equation (4.1)]{GaldiHieberKashiwabara2015} there is a function $B_{H^1}^v$ continuous on $[0,T]$, such that
\begin{align*}
\norm{\nabla v(t)}^2 + \int_0^t \norm{\Delta v(s)}^2 ds \leq B_{H^1}^v(\norm{\nabla a}, \int_0^t\norm{f}^2, \int_0^t\norm{\nabla_H \zeta}^2 t). 
\end{align*}
The proof of this bound is the most demanding part in proving \textit{a priori} bounds on the primitive equations.

\textit{Step 4: $H^1$ bound on the temperature and salinity}. \mbox{} \\
Multiplying temperature and salinity equations in \eqref{eq:primequiv} by $-\Delta \zeta$, and integrating over $\Omega$, gives 
\begin{align*}
\frac{1}{2}\partial_t \left(\norm{\nabla \zeta}^2 + \alpha \norm{\tau}^2_{L^2(\Gamma_u)}\right) + \norm{\Delta \zeta}^2 = \int_{\Omega} (v \nabla_H \zeta \cdot \Delta \zeta + w\partial_z \zeta \cdot \Delta \zeta) - \int_{\Omega} g \cdot \Delta \zeta.  
\end{align*}
By H\"older's inequality, the embedding $H^2(\Omega)\hookrightarrow L^{\infty}(\Omega)$ and Young's inequality
\begin{align*}
\int_{\Omega} \abs{v \nabla_H \zeta \cdot \Delta \zeta} &\leq \norm{v}_{L^{\infty}} \norm{\nabla_H \zeta}_{L^2}\norm{\Delta\zeta}_{L^2} \leq C \norm{v}_{H^2} \norm{\nabla \zeta}_{L^2}\norm{\Delta\zeta}_{L^2} \\
&\leq \frac{3C^2}{2} \norm{v}_{H^2}^2 \norm{\nabla \zeta}^2 + \frac{1}{6} \norm{\Delta\zeta}^2. 
\end{align*}
Taking into account the estimate  $\norm{v}_{H^2}^2 \leq C \norm{\Delta v}^2$, and adding the non-negative term $C \norm{\Delta v}^2\alpha \norm{\tau}_{L^2(\Gamma_u)}^2\geq 0$ we arrive at
\begin{align*}
\int_{\Omega} \abs{v \nabla_H \zeta \cdot \Delta \zeta}  \leq C \norm{\Delta v}^2 \left(\norm{\nabla \zeta}^2 + \alpha \norm{\tau}_{L^2(\Gamma_u)}^2\right) + \frac{1}{6} \norm{\Delta\zeta}^2. 
\end{align*}
Using
\begin{align*}
\norm{w}_{L^{\infty}}&\leq C \norm{w}_{H^1_z L_{xy}^{\infty}}= C \norm{\partial_z w}_{L^2_z L_{xy}^{\infty}}= C \norm{\div_H v}_{L^2_z L_{xy}^{\infty}} \\
&\leq C  \norm{\div_H v}_{L^2_z H^1_{xy}}\leq C  \norm{v}_{L^2_z H^2_{xy}} \leq C \norm{\Delta v},
\end{align*}
and applying Poicar\'e inequality to $w$ yields by adding on the right hand side $C \norm{\Delta v}^2\alpha \norm{\tau}_{L^2(\Gamma_u)}^2\geq 0$
\begin{align*}
\int_{\Omega} \abs{w \partial_z \zeta \cdot \Delta \zeta} \leq C \norm{\Delta v}^2 \left(\norm{\nabla \zeta}^2 + \alpha \norm{\tau}_{L^2(\Gamma_u)}^2\right) + \frac{1}{6} \norm{\Delta\zeta}^2. 
\end{align*}
Then, using 
\begin{align*}
\int_{\Omega} \abs{g \cdot \Delta \zeta} \leq \frac{3}{2}\norm{g}^2 + \frac{1}{6} \norm{\Delta\zeta}^2
\end{align*}
we  arrive at 
\begin{align*}
\partial_t \left(\norm{\nabla \zeta}^2 + \alpha \norm{\tau}^2_{L^2(\Gamma_u)}\right) + \norm{\Delta \zeta}^2 \leq   C \norm{\Delta v}^2 \left(\norm{\nabla \zeta}^2 + \alpha \norm{\tau}_{L^2(\Gamma_u)}^2\right) + 3\norm{g}^2.
\end{align*}
Integrating with respect to time and applying Gronwall's inequality yields
\begin{align*}
\norm{\nabla \zeta(t)}^2 + \alpha \norm{\tau(t)}^2_{L^2(\Gamma_u)} &\leq \left(\norm{\nabla b}^2 + \alpha \norm{b_{\tau}}^2_{L^2(\Gamma_u)} + 3\int_0^t\norm{g(s)}^2 ds \right) e^{\norm{\int_0^t\Delta v(s)}^2 ds}\\
&\leq \left(C\norm{b}_{H^1}^2 + 3\int_0^t\norm{g(s)}^2 ds \right) e^{B_{H^1}^v(t)}\\
 &=: \tilde{B}_{H^1}^{\tau}(t),
\end{align*}
where due to  Step 3 the integral $\int_0^t\norm{\Delta v(s)}^2 ds$ is bounded by $B_{H^1}^v(t)$ and 
$\norm{\nabla b} + \alpha\norm{b_{\tau}}^2_{L^2(\Gamma_u)}\leq C\norm{b}_{H^1}$. Adding on both sides $\norm{\zeta}^2$, we conclude that
\begin{align*}
\norm{\zeta(t)}_{H^1}^2 + \int_0^t\norm{\Delta\zeta(s)}^2 ds 
&\leq \norm{b}_{H^1}+ B_{L^2}^{\zeta}(t) +
C \tilde{B}_{H^1}^{\zeta}(t) \int_0^t \norm{\Delta v(s)}^2 ds + 3\int_0^t\norm{g(s)}^2 ds\\
&\leq \norm{b}_{H^1}+ B_{L^2}^{\zeta}(t) +
C \tilde{B}_{H^1}^{\zeta}(t) B_{H^1}^{v}(t) + 3\int_0^t\norm{g(s)}^2 ds =:B_{H^1}^{\zeta}(t).
\end{align*}

\textit{Step 4: $L^2$ bound on $\partial_t v$ and $\partial_t \zeta$}. \mbox{}  \\
In this step we derive estimates on $\partial_t \zeta$ by using the method of difference quotients. 
Hence, following \cite[Section 6, Step 7]{HieberKashiwabara2015}, define for $\eta >0$
\begin{align*}
(s_{\eta} \zeta)(t) := \zeta(t+\eta), \quad (s_{\eta} v)(t) := v(t+\eta) \quad \hbox{for } t\in (0,T-\eta],
\end{align*}
and the difference quotients
\begin{align*}
(D_{\eta} \zeta)(t) := \tfrac{1}{\eta}\left((s_{\eta} \zeta)(t) -\tau(t)\right), \quad (D_{\eta} v)(t) := \tfrac{1}{\eta}\left((s_{\eta} v)(t) -v(t)\right),
\end{align*}
where  $t\in (0,T-\eta]$. Following the computation for the velocity given in \cite[Equation (6.14)]{HieberKashiwabara2015} including now a right hand side $g$ 
we consider
\begin{align*}
\partial_t \zeta + (v\cdot \nabla_H \zeta + w\partial_z \zeta) - \Delta \zeta = g, \quad t\geq 0.
\end{align*}
Applying $D_{\eta}$, multiplying by $D_{\eta}\zeta$ and integrating over $\Omega$ gives 
\begin{align*}
&\frac{1}{2}\partial_t \norm{D_{\eta}\zeta}^2 + \norm{\nabla D_{\eta}\zeta}^2 + \alpha \norm{D_{\eta}\tau}^2_{L^2(\Gamma_u)}  
= -\int_{\Omega}D_{\eta}(v\cdot \nabla_H \zeta + w\partial_z \zeta)D_{\eta}\zeta + \int_{\Omega} D_{\eta}g D_{\eta} \zeta.
\end{align*}
A direct computation shows that
\begin{align*}
&D_{\eta}(v\cdot \nabla_H \zeta + w\partial_z \zeta) 
=
\left(s_{\eta} v \cdot \nabla_H D_{\eta} \zeta + (s_{\eta} w) \partial_z D_{\eta} \zeta\right)
+\left(D_{\eta} v \cdot \nabla_H \zeta + (D_{\eta} w) \partial_z \zeta\right).
\end{align*}
Now, 
\begin{align*}
\int_{\Omega} \left(s_{\eta} v \cdot \nabla_H D_{\eta} \zeta + (s_{\eta} w) \partial_z D_{\eta} \zeta\right) 
\cdot D_{\eta} \zeta & =\frac{1}{2}\int_{\Omega} (s_{\eta} v_1) \partial_x (D_{\eta} \zeta)^2 + (s_{\eta} v_2) \partial_y \left(D_{\eta} \zeta\right)^2 + (s_{\eta} w) \partial_z (D_{\eta} \zeta)^2\\
& = \frac{1}{2}\int_{\Omega}  -\div_H (s_{\eta} v) (D_{\eta} \zeta)^2 + \frac{1}{2}\int_{\Omega} (s_{\eta}\div_H v) (D_{\eta} \zeta)^2 =0,
\end{align*}
where we used the notation $(D_{\eta} \zeta)^2 = ((D_{\eta} \tau)^2, (D_{\eta} \sigma)^2)$ and in particular that $(s_{\eta}\div_H v)= \div_H (s_{\eta}v)$. Hence, 
\begin{align*}
\frac{1}{2}\partial_t \norm{D_{\eta}\zeta}^2 + \norm{\nabla D_{\eta}\zeta}^2 + \alpha \norm{D_{\eta}\tau}^2_{L^2(\Gamma_u)} 
& = - \int_{\Omega} \left( D_{\eta} v\cdot \nabla_H \right) \zeta \cdot D_{\eta} \zeta 
- \int_{\Omega} D_{\eta} w\partial_z \zeta  \cdot D_{\eta} \zeta - \int_{\Omega} D_{\eta} g  \cdot D_{\eta} \zeta \\
& =: I_1 + I_2 + I_3.
\end{align*}
The term $I_1$ is estimated by H\"older's inequality and Ladyzhenskaya's inequality $\norm{\phi}_{L^4}\leq C \norm{\phi}_{L^2}^{1/4}\norm{\nabla \phi}_{L^2}^{3/4}$ 
for $\phi\in H^1(\Omega)$ if $\phi$ vanishes on some part of the boundary and $\norm{\psi}_{L^4}\leq C \norm{\psi}_{L^2}^{1/4}\norm{\psi}_{H^1}^{3/4}$ 
for arbitrary $\psi\in H^1(\Omega)$ as
\begin{align*}
\abs{I_1} &\leq
\norm{D_{\eta} v}_{L^4} \norm{\nabla_H \zeta}_{L^2} \norm{D_{\eta} \zeta}_{L^4}  
\leq
C\norm{\nabla_H \zeta}_{L^2} \norm{D_{\eta} v}^{1/4}_{L^2} \norm{D_{\eta} \nabla v}^{3/4}_{L^2} \norm{D_{\eta} \zeta}^{1/4}_{L^2} \norm{D_{\eta}\zeta}^{3/4}_{H^1},
\end{align*}
and by Young inequality as
\begin{align*}
\abs{I_1} &\leq C\norm{\nabla_H \zeta}^4 \norm{D_{\eta} v}\norm{D_{\eta} \zeta} + \frac{1}{4}\norm{D_{\eta} \nabla v}\norm{D_{\eta}\zeta}_{H^1}\\
&\leq   \frac{C}{2}  \norm{\nabla_H \zeta}^4 \norm{D_{\eta} v}^2 + \frac{C}{2}  \norm{\nabla_H \zeta}^4 \norm{D_{\eta} \zeta}^2
+ \frac{1}{8}\norm{D_{\eta} \nabla v}^2 + \frac{1}{6}\norm{D_{\eta} \nabla \zeta}^2 + \frac{1}{6}\norm{D_{\eta} \zeta}^2.
\end{align*}
The second term $\abs{I_2}$ is estimated using iterated anisotropic H\"older estimates as 
\begin{align*}
\abs{I_2}\leq 
\norm{D_{\eta} w}_{L^{\infty}_z L_{xy}^2} \norm{\partial_z \zeta }_{L^{2}_z L_{xy}^3} \norm{D_{\eta} \zeta }_{L^{2}_z L_{xy}^6}.
\end{align*}
Similarly to the above, we obtain  
\begin{align*}
\norm{D_{\eta} w}_{L^{\infty}_z L_{xy}^2} \leq C \norm{D_{\eta} \div_H v}_{L^2(\Omega)^2}\leq C \norm{ D_{\eta} \nabla v}_{L^2(\Omega)^2}.
\end{align*}
For the embedding $H^{2/3}(G)\hookrightarrow L^6(G)$ we obtain 
$\norm{D_{\eta} \zeta }_{L^{2}_z L_{xy}^6}\leq C \norm{D_{\eta} \zeta }_{L^{2}_z H^{2/3}_{xy}}$.
Since $L^{2}_z H^{2/3}_{xy} \subset H^{2/3}(\Omega)$, $H^{2/3}(\Omega) = [L^2(\Omega), H^1(\Omega)]_{2/3}$ and $D_{\eta} \zeta \in H^1(\Omega)$,  we arrive 
by interpolation at
\begin{align*}
\norm{D_{\eta} \zeta }_{L^{2}_z H^{2/3}_{xy}} 
\leq
C \norm{D_{\eta} \zeta }^{1/3}  \norm{D_{\eta}\zeta }_{H^1}^{2/3}.
\end{align*}
Applying again Young's inequality yields 
\begin{align*}
\abs{I_2} &\leq 
C \norm{\nabla \zeta }_{L_z^2L_{xy}^3} \norm{D_{\eta} \nabla v}
\norm{D_{\eta} \zeta}^{1/3}  \norm{D_{\eta} \zeta}_{H^1}^{2/3} \\
&\leq 
C \norm{\nabla \zeta}_{L_z^2L_{xy}^3}^{3/2} \norm{D_{\eta} \nabla v}^{3/2}
\norm{D_{\eta} \zeta}^{1/2}  + \frac{1}{6} \norm{D_{\eta} \nabla \zeta}^{2}+ \frac{1}{6} \norm{D_{\eta} \zeta}^{2}\\
&\leq 
C \norm{\nabla \zeta}_{L_z^2L_{xy}^3}^{2} 
\norm{D_{\eta} \zeta}^{2}  + \frac{1}{8} \norm{D_{\eta} \nabla v}^{2} + \frac{1}{6} \norm{D_{\eta} \nabla \zeta}^{2} + \frac{1}{6} \norm{D_{\eta} \zeta}^{2}.
\end{align*}

Finally, consider  the term $I_3$. By Young's inequality 
$\abs{I_3} \leq \frac{1}{2} \norm{D_{\eta} g}^2 + \frac{1}{2}\norm{D_{\eta} \zeta}^2$.
Hence, 
\begin{align*}
\frac{1}{2}\partial_t \norm{D_{\eta}\zeta}^2 + \norm{\nabla D_{\eta}\zeta}^2 + & \alpha \norm{D_{\eta}\tau}^2_{L^2(\Gamma_u)} 
\leq C \norm{\nabla_H \zeta}^4 \left(\norm{D_{\eta} v}^2 + \norm{D_{\eta} \zeta}^2\right) + \frac{1}{8}\norm{D_{\eta} \nabla v}^2 + \frac{1}{6}\norm{D_{\eta} \nabla \zeta}^2 \\
& + C \norm{\nabla \zeta }_{L_z^2L_{xy}^3}^{6} \norm{D_{\eta} \zeta}^{2}  + \frac{1}{8} \norm{D_{\eta} \nabla v}^{2} + \frac{1}{6} \norm{D_{\eta} \nabla \zeta }^{2} +
\frac{1}{2}\norm{D_{\eta} g}^2 + \frac{5}{6} \norm{D_{\eta} \zeta}^2.
\end{align*}
For the velocity we obtain analogously
\begin{align*}
\frac{1}{2}\partial_t \norm{D_{\eta}v}^2 + \norm{\nabla D_{\eta}v}^2 
&\leq
C \norm{\nabla_H v}^4 \norm{D_{\eta} v}^2  + \frac{1}{8}\norm{D_{\eta} \nabla v}^2 
+ C \norm{\nabla v }_{L_z^2L_{xy}^3}^{6} \norm{D_{\eta} v}^{2}  + \frac{1}{8} \norm{D_{\eta} \nabla v}^{2} \\
& \quad + C\norm{D_{\eta} f}^2 + C\norm{D_{\eta} v}^2 + \frac{1}{6}\norm{D_{\eta} \nabla  \zeta}^2,
\end{align*}
where we used  $\norm{D_{\eta} \Pi(\zeta)}^2 \leq C \norm{D_{\eta} \nabla \zeta}^2$. Adding both inequalities, absorbing  the $\norm{D_{\eta} \nabla  \zeta}^2$ and 
$\norm{D_{\eta} \nabla  v}^2$ terms gives
\begin{align*}
\frac{1}{2}\partial_t \norm{D_{\eta}v}^2 
&+ \frac{1}{2}\partial_t \norm{D_{\eta}\zeta}^2 + \frac{1}{2}\norm{\nabla D_{\eta}\zeta}^2 + \alpha \norm{D_{\eta}\tau}^2_{L^2(\Gamma_u)} + \frac{1}{2}\norm{\nabla D_{\eta}v}^2 \\ 
&\leq 
C \left(\norm{\nabla_H v}^4 + C \norm{\nabla_H \zeta}^4 + \norm{\nabla v }_{L_z^2L_{xy}^3}^{6} + 1\right)\norm{D_{\eta} v}^2  \\
& +  C \left( \norm{\nabla_H \zeta}^4 + \norm{\nabla \zeta }_{L_z^2L_{xy}^3}^{6} + 1\right) \norm{D_{\eta} \zeta}^2 +
C\left(\norm{D_{\eta} f}^2 + \norm{D_{\eta} g}^2\right).
\end{align*}
Note that $\varphi_v$ given by 
\begin{align*}
\varphi_{v}(t)=\norm{\nabla_H v}^4 + C \norm{\nabla_H \zeta}^4 + \norm{\nabla v }_{L_z^2L_{xy}^3}^{6} + 1
\end{align*}
is integrable on $[0,T]$, since $\norm{\nabla_H v(t)}^4\leq \left(B_{H^1}^v(t)\right)^2$ and $\norm{\nabla_H \zeta(t)}^4\leq \left(B_{H^1}^{\zeta}(t)\right)^2$. In addition,  
$\norm{\nabla v }_{L_z^2L_{xy}^3}^{6}$ is integrable on $[0,T]$ due to the embedding $H^{1/3}(G)\hookrightarrow L^3(G)$, $L_z^2H_{xy}^{1/3}\subset H^{1/3}(\Omega)$ and 
the interpolation inequality for $H^{1/3}(\Omega)= [L^2(\Omega),H^1(\Omega)]_{1/3}$.  This gives
\begin{align*}
\norm{\nabla v }_{L_z^2L_{xy}^3}^{6} &\leq C \norm{\nabla v }_{L_z^2H_{xy}^{1/3}}^{6}
\leq C \norm{\nabla v }^{4}\norm{\nabla v }_{H^1}^{2}
\leq C \norm{\nabla v }^{4}\norm{\Delta v }^{2}
\leq \max_{s\in [0,T]}(B_{H^1}^v(s))^2 \norm{\Delta v }^{2},
\end{align*}
and note $\norm{\Delta v }^{2}$ is integrable by Step 3. Analogously, $\varphi_{\zeta}$ given by  
\begin{align*}
\varphi_{\zeta}(t):=\norm{\nabla_H \zeta}^4 + \norm{\nabla \zeta}_{L_z^2L_{xy}^3}^{6} + 1
\end{align*}
is integrable on $[0,T]$ and hence also
$\varphi(t)=\max\{\varphi_{v}(t), \varphi_{\zeta}(t)\}$.
Integrating with respect to $t$ yields 
\begin{align*}
\norm{D_{\eta}v(t)}^2  + \norm{D_{\eta}\zeta(t)}^2 
&\leq
\norm{D_{\eta} v(0)}^2  + \norm{D_{\eta}\zeta(0)}^2 
+
C \int_0^t \left(\norm{D_{\eta} f(s)}^2 + \norm{D_{\eta} g(s)}^2\right) ds\\
& \quad + 
C\int_0^t \varphi(s) \left(\norm{D_{\eta}v(s)}^2  + \norm{D_{\eta}\zeta(s)}^2\right) ds.
\end{align*}
By Gronwall's lemma 
\begin{align*}
\norm{D_{\eta}v(t)}^2  + \norm{D_{\eta}\zeta(t)}^2 
\leq \left(\norm{D_{\eta}v(0)}^2  + \norm{D_{\eta}\zeta(0)}^2 +
C\int_0^t\left(\norm{D_{\eta} f}^2 + \norm{D_{\eta} g}^2\right)
\right)  e^{\int_0^t C \varphi(s) ds},
\end{align*}
and by taking $\lim \eta \to 0$, we obtain
\begin{align*}
\norm{\partial_t v(t)}^2  + \norm{\partial_t \zeta(t)}^2 
\leq
\left(\norm{\partial_t v(0)}^2  + \norm{\partial_t\zeta(0)}^2 
+
C\int_0^t\left(\norm{\partial_t f}_{L^2}^2 + \norm{\partial_t g}_{L^2}^2\right)
\right)  e^{\int_0^t C \varphi(s) ds},
\end{align*}
where the limits to zero exist by assumption. 

It remains to estimate the initial values $\norm{\partial_t v(0)},\norm{\partial_t\zeta(0)}$. We obtain 
\begin{align*}
\norm{\partial_t v(0)} &\leq \norm{A_2 a} + \norm{a\nabla_H a} + \norm{w(0)\partial_z a} + \norm{f(0)} + \norm{\Pi(b)}, \\
\norm{\partial_t \zeta(0)} &\leq \norm{\Delta b} + \norm{a\nabla_H b} + \norm{w(0)\partial_z b} + \norm{g(0)}, 
\end{align*} 
where $\norm{\Pi(b)} \leq C \norm{\nabla b}$ and $\norm{a\nabla_H a} \leq C\norm{\Delta a}\norm{\nabla a}$, $\norm{a\nabla_H b} \leq C\norm{\Delta a}\norm{\nabla{b}}$, 
$\norm{w(0)\partial_z a} \leq C\norm{\Delta a}\norm{\nabla a}$ and $ \norm{w(0)\partial_z b} \leq C\norm{\Delta a}\norm{\nabla{b}}$.
Hence, 
\begin{align*}
\norm{\partial_t v(t)}^2  + \norm{\partial_t \zeta(t)}^2 
\leq \tilde{B}_{\partial_t (v,\zeta)}(t),
\end{align*}
where $\tilde{B}_{\partial_t (v,\zeta)}$ depends on $\norm{\Delta a}$, $\norm{\Delta{b}}$, $f(0)$, $g(0)$, $\int_0^t \norm{\partial_t f(s)}^2 ds$, $\int_0^t \norm{\partial_t g(s)}^2 ds$,  $B_{H^1}^{v}(t)$, $B_{H^1}^{\zeta}(t)$ and $T$.

\begin{remark}
Note that the estimate for $\norm{\partial_z v}_{L^{3}_z L_{xy}^3}$ which has been used in \cite[Section 6, Step 6]{HieberKashiwabara2015} has been avoided here. Instead the known $L^{\infty}(H^1)$ estimate is applied to $\norm{\partial_z v}_{L^{2}_z L_{xy}^3}$. This is necessary since the estimate in \cite{HieberKashiwabara2015} on $\norm{\partial_z v}_{L^{3}_z L_{xy}^3}$ is not suitable to include right hand sides $f$ or $f+\Pi(\zeta)$.
\end{remark}

\noindent
\textit{Step 5: $H^2$ bound on $v$ and $\zeta$, $H^1$ bound on $\pi_s$}. \mbox{} \\
Note that the graph norms of $-A_2$ and $-\Delta_{\zeta}$ for $q=2$, respectively, are equivalent to the $H^2$ norm. There are are hence constants $c,C>0$ satisfying 
\begin{align*}
\norm{v}_{H^2} \leq c \norm{\Delta v} \leq C \norm{A_2 v}, \hbox{ for } v\in D(A_2).
\end{align*}
So, using as in \cite[Section 6, Step 8]{HieberKashiwabara2015} first the estimate,
\begin{align*}
\norm{v\nabla_H v} \leq C \norm{v}_{L^6}\norm{v}_{H^{1,3}} \leq C \norm{v}_{H^1}^{3/2}\norm{v}_{H^2}^{1/2} \leq C \norm{\nabla v} + \frac{1}{4} \norm{A_2 v},
\end{align*}
hereby using  the embedding $H^{1}(\Omega) \hookrightarrow L^6(\Omega)$ as well as the estimate $\norm{v}_{H^{1,3}}\leq\norm{v}_{H^1}^{1/2}\norm{v}_{H^2}^{1/2}$, and secondly
\begin{align*}
\norm{w\partial_z v} \leq& C \norm{w}_{L_z^{\infty}L^{4}_{xy}}\norm{\partial_z v}_{L_z^{2}L^{4}_{xy}} 
\leq C \norm{v}_{L_z^{2}H^{1,4}_{xy}}\norm{v}_{H_z^{1,2}L^{4}_{xy}} 
\leq C \norm{v}_{L_z^{2}H^{3/2,2}_{xy}}\norm{v}_{H_z^{1,2}H^{1/2,2}_{xy}}.
\end{align*}
Now since 
$\norm{v}_{H^{r,q}_z H^{s,p}_{xy}} = \left\Vert \norm{v(\cdot, z)}_{H^{s,p}(G)} \right\Vert_{H^{r,q}(-h,0)}$ 
and since by interpolation
\begin{align*}
\norm{v}_{H^{3/2, 2}(G)} &\leq \norm{v}^{1/2}_{H^{1,2}(G)}\norm{v}^{1/2}_{H^{2,2}(G)}, & \hbox{for } v\in H^{2,2}(G), \\
\norm{v}_{H^{1/2, 2}(G)} &\leq \norm{v}^{1/2}_{H^{1,2}(G)}\norm{v}^{1/2}_{L^2(G)}, & \hbox{for } v\in H^{1,2}(G),
\end{align*}
Young's inequality implies  
\begin{align*}
\norm{v}_{L_z^{2}H^{3/2,2}_{xy}}&\leq \left\Vert \norm{v(\cdot,z)}_{H^{3/2,2}(G)}\right\Vert_{L^2(-h,0)}
\leq  \left\Vert\frac{1}{\varepsilon} \norm{v(\cdot,z)}_{H^{1,2}(G)} +
\varepsilon \norm{v(\cdot,z)}_{H^{2,2}(G)}\right\Vert_{L^2(-h,0)}\\
&\leq \frac{1}{\varepsilon} \norm{\norm{v(\cdot,z)}_{H^{1,2}(G)}}_{L^2(-h,0)} +
\varepsilon \left\Vert\norm{v(\cdot,z)}_{H^{2,2}(G)}\right\Vert_{L^2(-h,0)}\\
&\leq C \frac{1}{\varepsilon} \norm{v(\cdot,z)}_{H^{1,2}(\Omega)}+
C \varepsilon \norm{v(\cdot,z)}_{H^{2,2}(\Omega)}.
\end{align*}
Similarly, using the triangle inequality for the ${H^{1,2}(-h,0)}$ norm gives
\begin{align*}
\norm{v}_{H^{1,2}_z H^{1/2,2}_{xy}}
&\leq \left\Vert\norm{v(\cdot,z)}_{H^{1/2,2}(G)}\right\Vert_{H^{1,2}(-h,0)}\leq  \left\Vert\frac{1}{\varepsilon} \norm{v(\cdot,z)}_{L^2(G)} +
\varepsilon \norm{v(\cdot,z)}_{H^{1,2}(G)}\right\Vert_{H^{1,2}(-h,0)}\\
&\leq \frac{1}{\varepsilon} \left\Vert\norm{v(\cdot,z)}_{L^2(G)}\right\Vert_{H^{1,2}(-h,0)} +
\varepsilon \left\Vert\norm{v(\cdot,z)}_{H^{1,2}(G)}\right\Vert_{H^{1,2}(-h,0)}\\
&\leq C \frac{1}{\varepsilon} \norm{v(\cdot,z)}_{H^{1,2}(\Omega)}+
C \varepsilon \norm{v(\cdot,z)}_{H^{2,2}(\Omega)}.
\end{align*}
Choosing  $\varepsilon>0$ small enough gives
\begin{align*}
\norm{A_2 v (t)}
 \leq& \norm{\partial_t v(t)} + \norm{P_2 (v(t)\nabla_H v(t))} + \norm{P_2(w(t)\partial_z v(t))} + \norm{P_2f(t)} + \norm{P_2 f_{\tau}(t)} \\
 \leq& \norm{\partial_t v(t)} +  \frac{1}{2}\norm{A_2 v(t)} + \frac{C}{\varepsilon} \norm{\nabla v(t)} + \norm{f} + C\norm{\nabla \zeta}.
\end{align*} 
It follows that 
\begin{align*}
\norm{\Delta v (t)}
 \leq C (\tilde{B}_{\partial_t (v,\zeta)}(t))^{1/2}  +  \frac{C}{\varepsilon} B_{H^1}^v(t) + \norm{f} + C (B_{H^1}^{\zeta}(t))^{1/2}=:B_{H^2}^v(t), 
\end{align*} 
where $B_{H^2}^v$ is a continuous function involving all the quantities which have appeared so far.

Finally, assuming that we solved the equation
\begin{align*}
\partial_t v(t) - A_2 v + P_2 (v \nabla_H v + w\partial_z v) = P_2(f + \Pi(\zeta)),
\end{align*}
we may  reconstruct the gradient of the pressure by
\begin{align}\label{pis_nonlin}
\nabla_H \pi_s = (\mathds{1} - P_2) \left\{(f + \Pi(\zeta)) - \left(\Delta v + v \nabla_H v + w\partial_z v \right)\right\}.
\end{align}
Hence 
\begin{align*}
\norm{\nabla_H \pi_s(t)} \leq \norm{f(t)} + C(B_{H^1}^{\zeta}(t))^{1/2} + C B_{H^2}^v(t)=: B_{H^1}^{\pi_s}(t)
\end{align*}
and by equivalence of norms in $H^1(G)\cap L_0^2(G)$ one has $\norm{\pi_s}_{H^1} \leq C\norm{\nabla_H \pi_s}$ for some $C>0$. Considering the equations for the 
temperature and the salinity we  obtain
\begin{align*}
\norm{\Delta_{\zeta} \zeta (t)} + \norm{\zeta (t)} &\leq \norm{\partial_t \zeta(t)} + \norm{v(t)\nabla_H \zeta(t)} + \norm{(w(t)\partial_z \zeta(t))} + \norm{g(t)} + \norm{\zeta (t)}\\
 &\leq (\tilde{B}_{\partial_t (v,\zeta)}(t))^{1/2} +  C B_{H^2}^v(t) + C B_{H^1}^{\zeta}(t) + \norm{g(t)}=:B_{H^2}^{\zeta}(t), t \in [0,T],
\end{align*} 
where $B_{H^2}^{\zeta}$ is a continuous function involving all quantities which have appeared so far.
\end{proof}

\subsection{Strong Global Well-Posedness }
\begin{proof}[Proof of Theorem~\ref{theorem_globsol}]
If  $f\in H^{1,2}((0,T);L^p(\Omega))$, then $f(t) = f(0) + \int_0^t \partial_t f(s) ds$, (see e.g. \cite[Theorem III.1.2.2]{Amann1995}). Hence $f$ is continuous and 
the trace in $0$ is well defined and even $f\in C^{\eta}((0,T);L^p(\Omega))$ for $ 0 < \eta < 1/2$, see e.g. \cite[Theorem 3.9.1]{Amann2009}.

\textit{Step 1: Local Existence and regularization to $L^2$}. \mbox{} \\
According to Proposition~\ref{prop:loc_ex} (c) there exists $T^*>0$ such that there is a strong solution to \eqref{eq:primequiv} and \eqref{eq:bc} on $[0,T^*]$. 
For $p,q\in [2,\infty)$ we have for  $t\in (0,T^*]$ 
\begin{eqnarray*}
v(t)\in D(A_p)\subset D(A_2) & \hbox{and} & \zeta(t)\in D(\Delta_{q,\zeta}) \subset D(\Delta_{2,\zeta}), 
\end{eqnarray*}
and hence the solution constructed in Proposition~\ref{prop:loc_ex} is also a solution in $L^2$.

Consider $p,q \in (1,2)$. Proposition~\ref{prop:loc_ex} implies that $v,\zeta$ is a strong solution to \eqref{eq:primequiv} and \eqref{eq:bc} with 
\begin{eqnarray*}
v(t)\in D(A_p)\subset H^{2,p}(\Omega)^2 & \hbox{and} &  \zeta(t)\in D(\Delta_{q,\zeta})\subset H^{2,q}(\Omega) \quad \hbox{for } t\in (0,T^*].
\end{eqnarray*}
By Sobolev's embeddings, Proposition~\ref{prop_vtheta}  and using the consistency of the trace operators it follows that 
\begin{align}\label{Ap_V_p}
D(A_p)\hookrightarrow V_{1/p_1,p_1}, \quad D(\Delta_{q,\zeta}) \hookrightarrow \hat{V}_{1/q_1,q_1}   \quad \hbox{for }  p \geq \frac{3p_1}{2p_1+1} \hbox{ and } q \geq \frac{3q_1}{2q_1+1}.
\end{align}
This fact has been used in \cite{HieberKashiwabara2015} to prove global existence for the case $p = 6/5$ with $p_1=2$,  see Figure~\ref{fig:reg}. 
In the following we iterate this procedure by defining the recursive sequences $(p_n)$ and $(q_n)$ by
\begin{align*}
p_0 := 2, \quad p_{n+1} := \frac{3p_n}{2p_n + 1} \quad \hbox{and}\quad q_0 := 2, \quad q_{n+1} := \frac{3q_n}{2q_n + 1}, \quad n\in \N_0,
\end{align*}
By induction, $(p_n)$ and $(q_n)$ are strictly decreasing with $\lim_{n\to\infty} p_n =\lim_{n\to\infty} q_n = 1$. Hence, for any $p, q \in (1,2)$, there are $m,n\in \N_0$ such that $p_{m} < p \leq p_{m-1} \leq 2$ 
and $q_{n} < q \leq q_{n-1} \leq 2$. So, for $t_0>0$ we have 
\begin{align*}
(v(t_0),\zeta(t_0))\in D(A_p)\times D(\Delta_{q,\zeta})\subset V_{1/p_{m-1}, p_{m-1}}\times \hat{V}_{1/q_{n-1}, q_{n-1}}.
\end{align*} 
We now use $v(t_0), \zeta(t_0)$  as new initial values for a  solution in $L^{p_{m-1}}_{\overline{\sigma}}(\Omega) \times L^{q_{n-1}}(\Omega)^2$. 
By uniqueness of strong solutions, both the original solution and the newly  constructed solution coincide in $L^{p_{m-1}}_{\overline{\sigma}}(\Omega) \times L^{q_{n-1}}(\Omega)^2$. 
By Proposition~\ref{prop:loc_ex} for $t_1> t_0$ and by \eqref{Ap_V_p} 
\begin{align*}
(v(t_1),\tau(t_1))\in D(A_{p_{m-1}})\times D(\Delta_{q_{m-1},\zeta}) \subset V_{1/p_{m-2}, p_{m-2}}\times \hat{V}_{1/q_{m-2}, q_{m-2}}.
\end{align*} 
Again we  construct solutions in $L^{p_{m-2}}_{\overline{\sigma}}(\Omega)\times L^{q_{m-2}}(\Omega)^2$ by using $(v(t_1),\zeta(t_1))$ as new initial values. 
Iterating this procedure we arrive at a solution  at time $0<t_m< T^*$ satisfying  
\begin{align*}
(v(t_m),\zeta(t_m))\in V_{1/2, 2}\times \hat{V}_{1/2, 2}.
\end{align*}
Using these values as initial values it follows by uniqueness of strong solutions that for $t_m < t \leq T^*$, the local strong solution constructed in Proposition~\ref{prop:loc_ex} is already 
an $L^2$ solution. We hence may assume without loss of generality initial values at $t_m <t_{m+1}<T^*$
\begin{align*}
(v(t_{m+1}),\zeta(t_{m+1}))\in D(A_{p^{\prime}})\times D(\Delta_{q^{\prime},\zeta}),
\end{align*}
where
\begin{align*}
p^{\prime}:=\max\{p,2\}  \quad   \hbox{and} \quad  q^{\prime}:=\max\{q,2\}.
\end{align*}

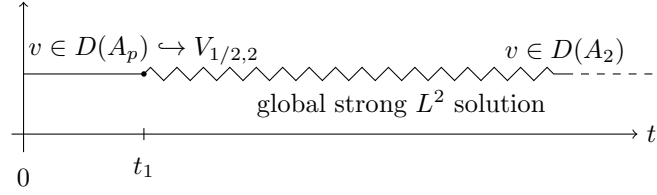
\begin{figure}
\begin{center}

\begin{tikzpicture}[scale=0.8]

    \draw[->] (-0.2,1) -- (10.2,1) node[right] {$t$};
    \draw[->] (0,0.7) -- (0,3.2) node[above] {$ $};

    \draw (0,0.7) node[below=3pt] {$ 0 $};
    \draw[-] (2,1.1) -- (2,0.9) node[below=3pt] {$ t_1 $};
    \draw[] (0,2) -- (2,2) node[above] {$v\in D(A_p)\hookrightarrow V_{1/2,2}$};

        \draw[snake] (2,2) -- (9,2) node[above] {$v\in D(A_2)$};
        \node[text width=5cm] at (7,1.5) {global strong $L^2$ solution};
        \draw[dashed] (9,2) -- (10.5, 2);
        \fill[] (2,2) circle (0.3ex) node[below] {};

\end{tikzpicture}
\caption{Regularization for the velocity $v$ and $p\geq 6/5$}\label{fig:reg}
\end{center}
\end{figure}

\noindent
\textit{Step 2: Global existence for $p,q \in [2,\infty)$}. \mbox{}  \\
Consider
\begin{align*}
I_0=[0,T_1], \quad I_1=[T_1^{\prime}-\varepsilon_1, T_1], \quad I_2=[T_1^{\prime}, T_2], \ldots, \quad I_n=[T_n^{\prime}, T_{n+1}], \ldots,
\end{align*}
where $T^*=T_1 < T_2 < \ldots $ with $T_n\to\infty$ and $T_{n}^{\prime}-\varepsilon_n$ with suitable $\varepsilon_n>0$ such that $T_n^{\prime}-\varepsilon_n >T_{n-1}$. 
We thus obtain a sequence of finite intervals with neighbor intervals overlapping, where the union of all these intervals  covers the whole interval $[0,\infty)$. 
Iteratively, we may then construct unique strong solutions on each of these intervals. The induction basis is provided by the local existence in $I_0=[0,T_1]$. 
Assume that there is a unique strong solution on all $I_m$ for $m\leq n$. By the uniqueness of the local solutions, the solutions coincide on the 
overlaps $I_l\cap I_{k}$, $k,l\leq n$ and hence by assumption there is a unique strong solution on $[0,T_n] = \cup_{m\leq n}I_n$. Now, by 
assumption $v(T_n^{\prime})\in D(A_{p})$ and $\zeta(T_n^{\prime})\in D(\Delta_{q,\zeta})$ are bounded since by induction hypothesis
\begin{eqnarray*}
v\in C^{0}((0,T_n]; D(A_{p})) & \hbox{and} &  \zeta\in C^{0}((0,T_n]; D(\Delta_{q,\zeta})).
\end{eqnarray*}
For $\varepsilon >0$ small, one has for all $p,q \geq 2$
\begin{align*}
D(A_2)\subset V_{\delta+\varepsilon}, \quad D(\Delta_{2,\zeta}) \subset \hat{V}_{\delta+\varepsilon}.
\end{align*}
Therefore, combing the \textit{a priori} estimates from Lemma~\ref{apriori} on $\norm{v(s)}_{D(A_2)}$ and $\norm{\tau(s)}_{D(\Delta_{2,\zeta})}$ 
with the assumption
\begin{eqnarray*}
\sup_{s\in I_{n+1}} \norm{f(s)}_{L^{p}(\Omega)^2}< \infty & \hbox{and} & \sup_{s\in I_{n+1}} \norm{g(s)}_{L^{q}(\Omega)}< \infty,
\end{eqnarray*}
and using Proposition~\ref{prop:loc_ex} (b), the time interval length $T_n^*$ can be chosen uniformly.
This allows one to construct local solutions within the intervals 
\begin{align*}
[T_n^{\prime}+k T_n^*,T_n^{\prime}+(k+1)T_n^*]\cap [T_n^{\prime}, T_{n+1}], \quad k = 0,1,2, \ldots k_n,
\end{align*} 
where $k_n\in\N$ is the smallest number such that $T_n^{\prime}+(k_n)T_n^* \geq T_{n+1}$. By the uniqueness of the local strong solutions this extended solution is unique as well, and 
hence it is proven that there is a unique solution even on $(0,T_{n+1}]$. 

\textit{Step 3: Global existence for $p,q \in (1,2)$}.  \mbox{} \\ 
If $p\in (1,2)$ or $q\in (1,2)$, then by step 2 there is a global strong solution in $L^{p^{\prime}}$ and $L^{q^{\prime}}$, respectively, for
\begin{align*}
p^{\prime}:=\max\{p,2\}  \quad   \hbox{and} \quad  q^{\prime}:=\max\{q,2\}.
\end{align*}
Since
\begin{eqnarray*}
D(A_2)\subset D(A_p) &\hbox{and}& D(\Delta_{2,\zeta})\subset D(\Delta_{q,\zeta}), \quad p,q\in (1,2),
\end{eqnarray*}
this is already a unique, global, strong solution in $L^p$ and $L^q$, respectively.

\textit{Step 4: Recovering the pressure}. \mbox{} \\
The pressure can be recovered to be
\begin{align*}
\nabla_H \pi_s = (\mathds{1} - P_p) \left\{(f + \Pi(\zeta)) - \left(\Delta v + v \nabla_H v + w\partial_z v \right)\right\},
\end{align*}
and it exists globally since both $\norm{v}_{D(A_p)}$ and $\norm{v}_{D(\Delta_{q,\zeta})}$ exist globally.
\end{proof}

\subsection{Decay at infinity}

\begin{proof}[Proof of Theorem~\ref{thm_decay}]
By the  regularization properties we may assume  without loss of generality that $v(0)\in D(A_p)$ and $\tau(0)\in D(\Delta_{\tau})$. As in the proof of 
Proposition~\ref{prop:loc_ex} we consider a fixed point iteration, this time with exponential weight. 
To this end, let $\tilde{\beta_v}$, $\tilde{\beta_{\tau}}$ such that
\begin{align*}
0<\tilde{\beta_v} < \beta_v \hbox{ and } 0<\tilde{\beta_{\tau}} < \beta_{\tau} \hbox{ with } \tilde{\beta_v} \leq \tilde{\beta_{\tau}}.
\end{align*}
For instance, we may  set $\tilde{\beta_{\tau}} =  \beta_{\tau}$ and $\tilde{\beta_v}=\min\{\beta_v, \beta_{\tau}\}-\varepsilon$ for $\varepsilon \geq 0$ sufficiently small. Then we define 
\begin{align*}
\tilde{k}_m^{v,\infty}:= \sup_{s\in (0,\infty)} e^{\tilde{\beta}_v s} \norm{v_m(s)}_{V_{\gamma}}, \quad
\tilde{k}_m^{\tau,\infty}:= \sup_{s\in (0,\infty)} e^{\tilde{\beta}_{\tau} s} \norm{\tau_m(s)}_{\hat{V}_{\gamma}}, \quad m\in \N_0.
\end{align*}
Similarly to the proof of Proposition~\ref{prop:loc_ex} we show that
\begin{align*}
e^{\tilde{\beta_v} t} \norm{v_{0}(t)}_{V_{\gamma}}
&\leq C e^{(\tilde{\beta_v} - \beta_v)t}\norm{a}_{V_\delta} + C \sup_{s\in (0,t)}\{ e^{\beta_f s} \norm{P_p f(s)}_{X_p} \},\\
e^{\tilde{\beta_{\tau}} t} \norm{\tau_{0}(t)}_{\hat{V}_{\gamma}}
&\leq C e^{(\tilde{\beta_v} - \beta_v)  t}\norm{b_{\tau}}_{\hat{V}_\delta} + C \sup_{s\in (0,t)}\{ e^{\beta_g s} \norm{g(s)}_{L^p} \}.
\end{align*}
Further, using $\tilde{\beta_v}\leq \tilde{\beta_{\tau}}$,
\begin{align*}e^{\tilde{\beta_v} t} \norm{v_{m+1}(t)}_{V_{\gamma}}
&\leq e^{\tilde{\beta_v} t} \norm{v_{0}(t)}_{V_{\gamma}} 
+ C\sup_{s\in (0,t)}\{ e^{\tilde{\beta_v} s} \norm{v_m}_{V_{\gamma}} \}^2 +  C\sup_{s\in (0,t)}\{ e^{\tilde{\beta_v} s} \norm{\tau_m}_{\hat{V}_{\gamma}} \}, \\
e^{\tilde{\beta_{\tau}} t} \norm{\tau_{m+1}(t)}_{V_{\gamma}} &\leq 
e^{\tilde{\beta_{\tau}} t} \norm{\tau_{0}(t)}_{V_{\gamma}} 
+ C\sup_{s\in (0,t)}\{ e^{\tilde{\beta_{\tau}} s} \norm{v_m}_{V_{\gamma}} \}^2 +  C\sup_{s\in (0,t)}\{ e^{\tilde{\beta}_v s} \norm{\tau_m}_{\hat{V}_{\gamma}} \}^2.
\end{align*}
Hence
\begin{align*}
\tilde{k}_{m+1}^{v,\infty} &\leq \tilde{k}_0^{v,\infty}  + C(\tilde{k}_{m}^{v,\infty})^2  +
C\tilde{k}_m^{\tau,\infty}, \\
\tilde{k}_{m+1}^{\tau,\infty} &\leq \tilde{k}_0^{\tau,\infty} + C(\tilde{k}_{m}^{v,\infty})^2  +
C(\tilde{k}_{m}^{\tau,\infty})^2, \quad \hbox{for } m\in \N_0.
\end{align*}
For $\tilde{k}_0^{v,\infty}$ and $\tilde{k}_0^{\tau,\infty}$ small enough, the sequence $(\tilde{k}_m^{v,\infty}, \tilde{k}_m^{\tau,\infty})$, $m\in \N_0$ is  uniformly bounded. 
In particular, the limits of $e^{\tilde{\beta_v} s} v_m(s)$ and $e^{\tilde{\beta_{\tau}} s} \tau_m(s)$ exist in $V_{\gamma}$ and $\hat{V}_{\gamma}$, respectively, defining a global solution 
with exponential decay. Since locally solutions are unique the constructed solution coincides  with the global solution constructed before. In particular, then the non-linear 
remainders $F_p(v,\tau, 0)$ and $G_p(v,\tau, 0)$ are exponentially decaying. 

It remains to prove that there are initial values such that $\tilde{k}_0^{v,\infty}$ and $\tilde{k}_0^{\tau,\infty}$ are indeed sufficiently small. Consider first the case $p=2$. 
By assumption $f\in L^2(0,\infty;L^2(\Omega)^2)$ and $g\in L^2(0,\infty;L^2(\Omega))$. We  conclude from \eqref{vl2} and analogously for $\mu_1>0$, the smallest eigenvalue of $-\Delta_{\tau}$,
\begin{align*}
\norm{\tau(t)}^2 + \int_0^t \norm{\nabla \tau(s)}^2 + \alpha \norm{\tau(s)}^2_{L^2(\Gamma_u)} ds &\leq  \norm{b_{\tau}} + \frac{1}{\mu_1}\int_0^t\norm{g(s)}^2 ds.
\end{align*}
Therefore, $\norm{v(t)}^2_{H^1(\Omega)}$ and $\norm{\tau(t)}^2_{H^1(\Omega)}$ are integrable on $(0,\infty)$. 
Hence, $$\inf_{t\in (0,\infty)}\left(\norm{v(t)}^2_{H^1(\Omega)}+ \norm{\tau(t)}^2_{H^1(\Omega)}\right)=0,$$ and it follows that there is $t_0\geq 0$ such that $\norm{v(t)}^2_{H^1(\Omega)}
=\norm{v(t)}^2_{V_{1/2}}$ and $\norm{\tau(t)}^2_{H^1(\Omega)}= \norm{\tau(t)}^2_{\hat{V}_{1/2}}$ are small enough. Hence, mimicking the proof of \cite[Theorem 6.1, Step 9]{HieberKashiwabara2015}, 
it follows form the above that $\norm{v(t)}^2_{D(A_2)}$ and $\norm{\tau(t)}^2_{D(B_2)}$ are exponentially decaying. For $p\in (1,\infty)$ one has $D(A_2) \subset V_{1/p}$ and $D(B_2) \subset \hat{V}_{1/q}$ by Proposition~\ref{prop_vtheta}. Thus, there for all $p,q\in (1,\infty)$ there exist initial values for which 
$\tilde{k}_0^{v,\infty}$ and $\tilde{k}_0^{\tau,\infty}$ are small enough. We conclude as in \cite[Theorem 6.1, Step 9]{HieberKashiwabara2015} that \begin{align*}
\norm{\partial_t v}_{L^p} +   \norm{v}_{D(A_p)} \leq C e^{-\beta_v t}, \quad
\norm{\partial_t \tau}_{L^p} +   \norm{\tau}_{D(\Delta_{\tau})} \leq C e^{-\beta_{\tau} t}, \hbox{ for some } C>0.
\end{align*}
Reconstructing the pressure term we obtain
\begin{align*}
\norm{\nabla_H \pi_s}_{L^p} \leq C \left(\norm{v}_{D(A_p)}^2 +\norm{\tau}_{D(\Delta_{\tau})}+  \norm{f}\right) \leq C e^{-\min\{\beta_v, \beta_\tau\}t}.
\end{align*}
\end{proof}

\end{document}